\documentclass[11pt,reqno]{amsart}
\usepackage{amsthm,amssymb,amsmath}
\usepackage{textcomp}
\usepackage{xcolor}
\usepackage{enumitem}
\usepackage[colorlinks=true, pdfstartview=FitV, linkcolor=blue, citecolor=blue, urlcolor=blue,pagebackref=false]{hyperref}
\usepackage{ulem}
\usepackage{esint}

\usepackage[T1]{fontenc}
\usepackage{mlmodern}

\newcounter{ascan} 

\usepackage{fix-cm}

\newtheorem{theorem}{Theorem}
\newtheorem*{theorem*}{Theorem}
\newtheorem{lemma}{Lemma}[section]
\newtheorem{corollary}{Corollary}
\newtheorem{proposition}{Proposition}
\newtheorem*{proposition3prime}{Proposition $\ref{prop1}'$}
\newtheorem*{proposition4prime}{Proposition $\ref{prop2}'$}
\newtheorem{claim}{\it Claim}

\newtheorem{definition}{\bf Definition}
\newtheorem*{definition*}{\bf Definition}

\theoremstyle{definition}

\newtheorem{example}{\bf Example}
\newtheorem*{examples}{\bf Examples}
\newtheorem{remark}{Remark}

\newtheorem*{remark*}{Remark}

\newtheorem*{example*}{\bf Example}

\newcommand{\loc}{{\rm loc}}

\newcommand{\const}{{\rm const}}

\newcommand{\sprt}{{\rm sprt\,}}

\numberwithin{equation}{section}

\begin{document}

\title[Strong solutions and discontinuous diffusion coefficients]{Strong solutions of SDEs with critical discontinuities in diffusion coefficients}

\author{S.\,E.\,Boutiah}

\address{Universit\'{e} Laval, D\'{e}partement de math\'{e}matiques et de statistique, Qu\'{e}bec, Canada and Laboratoire de Math\'{e}matiques Appliqu\'{e}es, Universit\'{e} Ferhat Abbas, S\'{e}tif 1, Campus El Bez,  S\'{e}tif, Algeria}

\email{sallah-eddine.boutiah.1@ulaval.ca}

\author{D.\,Kinzebulatov}

\address{Universit\'{e} Laval, D\'{e}partement de math\'{e}matiques et de statistique, Qu\'{e}bec, Canada}

\email{damir.kinzebulatov@mat.ulaval.ca}

\thanks{The research of D.K. is supported by Discovery grant RGPIN-2024-04236 of the Natural Sciences and Engineering Research Council of Canada}

\begin{abstract}
We prove strong existence for It\^{o} SDEs with diffusion coefficients that can introduce strong attraction  to a submanifold. We extend and strengthen the R\"{o}ckner-Zhao approach, which uses Malliavin calculus to establish compactness of the approximating solutions in Wiener-Sobolev space. At least when the diffusion coefficients are sufficiently regular in time, this provides an alternative to Krylov's recent proof of strong existence via analysis of the It\^{o}-Duhamel series.
\end{abstract}

\subjclass[2020]{60H10 (primary), 60H07, 60J60, 35R05 (secondary)}

\keywords{Stochastic differential equations, strong solutions, Malliavin calculus, discontinuous diffusion coefficients, singular drifts}

\maketitle

\fontsize{10.4pt}{4.5mm}\selectfont

\section{Introduction and main result}

\subsection{Introduction}
This paper, which continues \cite{KiM_strong}, concerns strong existence for the It\^{o} SDE
\begin{equation}
\label{sde1}
dX_t=\sigma(t,X_{t})\, dW_t, \quad X_0=x \in \mathbb R^d, 
\end{equation}
where $\{W_s\}_{s \geq 0}$ is a $d$-dimensional Brownian motion ($d \geq 3$) 
with diffusion coefficients that are non-degenerate, i.e.\,$\sigma$ is bounded on $\mathbb R^{1+d}$ and there exists $\kappa>0$ such that, for every $t \in \mathbb R$,
 \begin{equation}
 \label{sigma_1}
\sigma(t,\cdot) \sigma(t,\cdot)^{\top} \geq \kappa I \quad \text{a.e.\,on }\mathbb R^d,
 \end{equation}
but can have critical discontinuities.
By a critical discontinuity of $\sigma$ we mean a discontinuity that can ``trap'' the solution, for example, 
\begin{equation}
\label{hardy_sigma}
\sigma(x) = \frac{1}{\sqrt{1+c}} \left[ I+\left(\sqrt{1+c}-1\right) \frac{x\otimes x}{|x|^2} \right], \quad 0 \neq x \in \mathbb R^d,  \quad c>-1.
\end{equation}
Here $I$ is the $d \times d$ identity matrix, $\sigma(0)=0$.
This is a well known-example of a diffusion coefficient that can make $X_{t}$ arrive at the origin and stay there a.s.\,if $c>d-2$, see \cite{D} and \cite[V.3, Sect.3]{B}. This attraction phenomenon can be seen at the level of the two-sided estimates on the transition density $p(t,x,y)$ of \eqref{sde1} with $\sigma$ given by \eqref{hardy_sigma}, which are no longer purely Gaussian even for $c<d-2$ but involve an extra polynomial factor $|y|^{-\theta} \vee 1$ with power $\theta=\theta(d,c)$ that quantifies the strength of the attraction to the origin introduced by $\sigma$ \cite{MNS}. 
Such attraction mechanisms occur in some physical models, for instance, in some models of turbulent transport the  multiplicative diffusion may cause clustering, collision, coalescence, or sticky behaviour, see \cite{GC,GV}.

See \eqref{nabla_sigma} for the definition of our class of $\sigma$ that includes \eqref{hardy_sigma}.

The study of related problem of weak and strong well-posedness for SDEs with \textit{additive noise and singular drift} has undergone dramatic progress in the past decade; more on this below. 
However, less is known about critical discontinuities of the diffusion coefficients. Compared to singular drifts, the former present a different set of difficulties. In particular, while the problem of weak existence for \eqref{sde1}  is straightforward and is settled using a simple tightness estimate obtained from the BDG inequalities, the problem of strong existence is a different matter, as illustrated by the classical example of Krylov-Zvonkin (Example \ref{kr_zv_example}).
Recently, Krylov introduced an approach to constructing a conditionally unique strong solution of It\^{o} SDE \eqref{sde1} -- that can also contain a singular drift as in \cite{KiM_strong} -- as the sum of the It\^{o}-Duhamel series. Briefly, he defines $X_t$ via the \textit{explicit formula}
\begin{align*}
f(X_t) = P^{0,t}f(x) + \sum_{n=1}^{\infty} \int_{\Delta_n (t)} P^{0, t_n}Q_{k_n}^{t_{n-1},t_n} \cdots Q_{k_1}^{0,t_1}f(x) \, dW_{t_1}^{k_1} \cdots dW_{t_n}^{k_n},
\end{align*}
obtained by iterating It\^{o}'s formula
\begin{equation*}
f(X_t) = P^{0,t}f(x) + \int_0^t Q_k^{r,t}f(X_r) \, dW_r,
\end{equation*}
where $Q_k^{r,t}f(x) := \sigma^{ik}(t,x)\nabla_i P^{r,t}f(x)$ and $P^{r,t}$ is the propagator for SDE \eqref{sde1}. Krylov's proof that this formal series actually converges to a strong solution to SDE \eqref{sde1} uses his much earlier result with Veretennikov \cite{KrV} and his new functional-analytic arguments, see \cite{Kr1} and \cite{Kr2,Kr3} regarding the time-homogeneous case $\sigma=\sigma(x)$ and the time-inhomogeneous case $\sigma=\sigma(t,x)$,  respectively. 

The principal objective of the present work is to provide an alternative to Krylov's proof of strong existence for \eqref{sde1}  based on the approach to constructing strong solutions via compactness on the Wiener-Sobolev space due to \cite{RZ} and its strengthening in \cite{KiM_strong}.
Namely, in \cite{RZ}, Michael R\"{o}ckner and Guohuan Zhao  considered the SDE with  additive noise
\begin{equation}
\label{sde2}
X_{t}=x + \int_r^t b(s,X_s)\, ds+ W_t-W_r, \quad t \in [r,T],
\end{equation}
with drift $b$ in the Ladyzhenskaya-Prodi-Serrin class
\begin{equation}
\label{lps}
|b| \in L^q(\mathbb R,L^p(\mathbb R^d)), \quad \frac{d}{p}+\frac{2}{q} \leq 1, \quad d \leq p \leq \infty,\;\; 2 \leq q \leq \infty.
\end{equation}
Building on earlier works of Bailly-Saussereau \cite{BS}, Meyer-Brandis--Proske \cite{MP}, Mohammed-Nilsen-Proske \cite{MNP} and Rezakhanlou \cite{R}, they constructed a strong solution to \eqref{sde2}, \eqref{lps} by showing that the sequence of strong solutions of the SDEs with regularized drifts is compact in the Wiener-Sobolev space. They furthermore showed that thus constructed strong solution is conditionally pathwise unique -- here and below, conditional on a reasonable occupation estimate -- using Cherny's ``dual to the Yamada-Watanabe principle'' \cite{C}, exploiting the fact that \eqref{sde2} has constant diffusion coefficients. Shortly after, \cite{KiM_strong} strengthened the PDE part of \cite{RZ}, which allowed the authors to establish strong existence and conditional pathwise uniqueness for \eqref{sde2} for form-bounded drifts that, unlike \eqref{lps}, also cover critical-order singularities that can introduce e.g.\,strong attraction phenomena (see Definition \ref{fbd_def} and examples thereafter).

The present paper thus continues \cite{RZ, KiM_strong} and replaces the functional-analytic arguments of Krylov by algebraic calculations involving Malliavin derivatives of solutions of the approximating SDEs, in order to keep track of 
how strong solutions $X^{x,n}_{t}$ of the approximating SDEs 
$$
dX_{t}^{n,x}=\sigma_n(t,X^{n,x}_{t})\, dW_t, \quad X^{n,x}_{0}=x \in \mathbb R^d, 
$$
with smooth $\sigma_n$, driven the same Browninan motion, depend on Brownian paths.
The heavy lifting is still done by PDE estimates (Propositions \ref{prop_grad} and \ref{prop1}), but only as a priori estimates, i.e.\,the PDEs have smooth coefficients, but the constants in the estimates do not depend on the smoothness. Up to the use of the Malliavin calculus, the proofs below arguably use only elementary probabilistic arguments.

\begin{enumerate}

\item[--] At the PDE level we strengthen \cite{RZ}:

\medskip

\begin{enumerate}\item[(a)] Proposition \ref{prop1}, the crucial simplex estimate in the approach of R\"{o}ckner-Zhao needed to verify compactness on the Wiener-Sobolev space, is proved along the lines of \cite{KiM_strong} -- that was the authors' main observation in that paper -- rather than \cite{RZ}. 

\medskip

\item[(b) ] Proposition \ref{prop_grad} provides gradient bounds for solutions of the backward Kolmogorov equation. These bounds are nontrivial because of the possible blow-up phenomena and are proved using the approach of \cite{KS, Ki_Osaka}. 

We also use these  gradient bounds to run Moser iterations in order to construct the Feller propagator.

\medskip

\end{enumerate}

\item[--] At the probabilistic level we extend \cite{RZ}:

\medskip

\begin{enumerate}

\item[(c)] In the case of multiplicative noise, the calculations involving the Malliavin derivatives of solutions of the approximating SDEs are  more involved; it is at this step that we use the additional time-regularity assumption ($\mathbf{H}$). Let us also add that iterating the estimate of Proposition \ref{prop1} in order to prove Proposition \ref{prop2} (i.e.\,the actual verification of compactness on the Wiener-Sobolev space), as was done in \cite{RZ} and \cite{KiM_strong} for singular drifts, is rather difficult since now one needs to iterate It\^{o}'s integrals, and simply using the BDG inequality does not allow to control the convergence of the geometric series (in the one-dimensional case there is, however, a result of Carlen-Kr\'{e}e \cite{CK} that provides the sought control on the iterated It\^{o} integrals). Instead, to obtain Proposition \ref{prop2} from Proposition \ref{prop1}, we appeal to a stochastic Gronwall-type estimate.

\end{enumerate}
\end{enumerate}

\medskip

Our main interest in this work is the spatial singularities of the diffusion coefficients $\sigma$; although our $\sigma$ can depend on time $t$, Krylov in \cite{Kr2} can handle stronger singularities in $t$ (naturally, at the moments when the drift is more regular in space). In the time-homogeneous case, our class of diffusion coefficients is somewhat larger than the class considered by Krylov -- the difference is e.g.\,the Chang-Wilson-Wolff class, see examples below -- at least comparing it with how Krylov's result and proof are written down. Let us also add that Krylov also proves other results related to SDE \eqref{sde1}, such as conditional pathwise uniqueness, that we do not attempt to discuss in the present paper.

The class of time-homogeneous matrix fields $a=\sigma\sigma^{\top}$, where $\sigma=\sigma(x)$ is in our class \eqref{nabla_sigma}, was treated earlier in \cite{KiS_Osaka} in the context of the weak existence/unique Feller semigroup theory of the SDE
$$
dX_t=b(X_t)dt + \sqrt{a(X_t)}dW_t, \quad X_0=x \in \mathbb R^d
$$
having additionally form-bounded drift $b \in \mathbf{F}_\delta$ (Definition \ref{fbd_def}). The present paper is of course quite different, but it uses gradient bounds (Proposition \ref{prop_grad}) and employs the construction of the Feller semigroup via Moser-type iterations (Theorem \ref{thm1}(\textit{ii})) similar to those in \cite{KiS_Osaka}.

Commenting further on the weak solutions (for simplicity, still for time-homogeneous $a$), we mention the detailed weak solution theory of 
$$
dX_t=\sqrt{a(X_t)}dW_t, \quad X_0=x, \quad d \geq 2,
$$
due to Krylov, with bounded, symmetric $a \geq \kappa I$ in the VMO class, that is, satisfying
$$
\omega_a(r) \rightarrow 0 \quad \text{ as } r \downarrow 0,
$$
where
$$
\omega_a(r):=\sup_{x \in \mathbb R^d, 0<\rho \leq r} \frac{1}{|B_\rho|}\int_{B_\rho(x)}\big|a(y)-a_{B_{\rho}(x)}\big|dy, \quad a_B:=\frac{1}{|B|}\int_B a.
$$
The VMO is a broad condition that, unlike the class \eqref{nabla_sigma}, does not impose any conditions on the derivatives of $a$. Although, on the other hand, there are matrices that satisfy \eqref{nabla_sigma} but are not in VMO, see examples before Theorem \ref{thm1}. 
Under these condition on $a$, Krylov proved, among many other results, the uniqueness in law. Recently, he also included a Morrey class singular drift in this SDE, see details in the end of Section \ref{sing_drift_sect}.

In what follows, $\sigma_j:\mathbb R^{1+d} \rightarrow \mathbb R^d$ denotes the $j$-th column of the diffusion matrix $\sigma$.

\subsection{Main result}

In Theorem \ref{thm1} we consider the class of diffusion matrices $\sigma$ having form-bounded derivatives:
\begin{equation}
\label{nabla_sigma}
|\nabla \sigma|:=\sqrt{\sum_{i,j=1}^d |\nabla_i \sigma_j|^2} \in \mathbf{F}_{\nu},
\end{equation}
i.e.\,$|\nabla \sigma| \in L^2_{\loc}(\mathbb R^{1+d})$ and for a.e.\,$t \in \mathbb R$
\begin{equation*}
\||\nabla \sigma(t,\cdot)|\varphi\|_2^2   \leq \nu \|\nabla \varphi\|_2^2+ c_\nu\|\varphi\|_2^2 \quad \forall\,\varphi \in C_c^\infty(\mathbb R^d)
\end{equation*}
for some constant $c_\nu$ -- the value of this constant is not so important for us in this paper, unlike the value of the constant $\nu$ (``form-bound of $|\nabla \sigma|$''), which measures the magnitude of the singularities of $|\nabla \sigma|$. 

\begin{examples} 1.~We have $$|\nabla \sigma| \in L^d(\mathbb R^d) \quad \Rightarrow \quad |\nabla \sigma| \in \mathbf{F}_\nu$$ with $\nu$ that can be chosen arbitrarily small at the expense of increasing $c_\nu$, as can be easily seen by applying H\"{o}lder inequality the Sobolev embedding theorem; we postpone the details until the examples in the next section.

2.~More generally, if $|\nabla \sigma| \in L^{d,\infty}(\mathbb R^d)$ (weak $L^d$ space), then  $|\nabla \sigma| \in \mathbf{F}_\nu$ with $\nu \approx \|\nabla \sigma\|_{L^{d,\infty}(\mathbb R^d)}$. 

3.~A still broader example is provided by matrices $\sigma$ with finite Morrey norm $\|\nabla \sigma\|_{M_{2+\varepsilon}}$ or finite Chang-Wilson-Wolff norm, see \eqref{morrey}, \eqref{cww}.

4.~We now switch to examples of concrete diffusion matrices. Consider the attracting diffusion matrix from the introduction:
$$
\sigma(x)=\frac{1}{\sqrt{1+c}} \left[ I+\left(\sqrt{1+c}-1\right) \frac{x\otimes x}{|x|^2} \right], \quad x \in \mathbb R^d.
$$
A simple calculation using Hardy's inequality $$\langle |x|^{-2},\varphi^2\rangle \leq \frac{4}{(d-2)^2}\langle|\nabla \varphi|^2\rangle, \quad \varphi \in C_c^\infty(\mathbb R^d),$$ shows that $$|\nabla \sigma| \in \mathbf{F}_\nu \quad \text{ with } \nu=8\frac{(d-1)}{(d-2)^2}\biggl(\frac{\sqrt{1+c}-1}{\sqrt{1+c}} \biggr)^2.$$

5.~An example of a rapidly oscillating diffusion matrix from \cite{KiS_Osaka}:
$$\sigma(x)={\rm diag}(1+c\sin (\alpha\log |x|),1,\dots,1)$$
for fixed $0<c<1$. Indeed, the uniform ellipticity condition $\sigma \sigma^{\top} \geq (1-c)^2I$ is satisfied, and
$$
|\nabla \sigma|^2=\frac{c^2 \alpha^2 \cos^2 (\alpha \log|x|)}{|x|^2} \leq \frac{c^2\alpha^2}{|x|^2},
$$
so the Hardy inequality implies the form-boundedness of $|\nabla \sigma|^2$. This example shows that the smallness of the form-bound (take $\alpha$ small) does not imply that the diffusion matrix is close to a constant matrix, as long as one keeps the amplitude of the oscillations $c$ close to $1$. Let us also note that this matrix $\sigma$ is not in the ${\rm VMO}$ class of diffusion coefficients. The class has received a considerable attention  in the literature on SDEs and Kolmogorov equations, more on this in the end of this section.

6.~Rotating anisotropy. Define the rotation the matrix in the first two coordinates: $$
R_\theta:=
\biggl(\begin{array}{cc} \cos\theta & -\sin\theta 
\\ 
\sin\theta & \cos\theta 
\end{array} \biggr) \oplus I_{d-2},
$$
where $I_{d-2}$ is the identity matrix of dimension $d-2$.
Fix a constant diagonal matrix $D:={\rm diag\,}(\lambda_1,\dots,\lambda_d)$, $\lambda_i>0$, and define 
$$
\sigma(x):=R_{\varepsilon\log |x|}D.
$$
It follows that $\sigma\sigma^{\top} \geq (\min_i \lambda^2_i) I$ and $$|\nabla \sigma|^2=\frac{\varepsilon^2(\lambda_1^2+\lambda_2^2)}{|x|^2},$$ i.e.\,by Hardy's inequality, $|\nabla \sigma|$ is form-bounded. 

We could also write $\mathbb R^d=\mathbb R^m \oplus \mathbb R^{d-m}$, $m \geq 3$, and make the logarithm dependent on the first $m$ coordinates of $x$. In this case, to show the form-boundedness of $|\nabla \sigma|$, we can refer to the Hardy inequality in the first $m$ variables. Alternatively, one can verify the Morrey class condition (example 3), albeit at the expense of having much less explicit expression for the form-bound.

See also examples after Theorem \ref{thm2}.
\end{examples}

More generally, we can consider a series of matrices from the previous examples -- properly normalized to ensure the convergence of the series to a uniformly elliptic matrix -- with their points of discontinuity constituting a dense subset of $\mathbb R^d$.

\medskip

We now state the main result of the paper. Set 
\begin{align*}
\rho(x)  \equiv \rho_{\theta}(x) 
:=(1+\theta |x|^2)^{-\frac{d}{2}-1}, \quad x \in \mathbb R^d,
\end{align*}
where $\theta>0$.
We have
\begin{equation}
\label{two_est}
|\nabla \rho| \leq C\sqrt{\theta}\rho, \qquad \frac{|\nabla \rho|^2}{\rho} \leq C\theta\rho.
\end{equation}
We will be applying \eqref{two_est} to $\rho$ with $\theta$ chosen sufficiently small, in order to control the occurrences of the derivatives of $\rho$. 

Let $L^p_\rho(\mathbb R^d)$ denote the $L^p$ space with respect to measure $\rho(x) dx$. Denote by $\rho_y$, $y \in \mathbb R^d$, the translation $\rho_y(x):=\rho(x-y)$.

Let $x \in \mathbb R^d$. We consider SDE
\begin{equation} 
\label{sde1_}
X_{r,t}^x=x + \int_r^t \sigma(s,X^x_{r,s})\, dW_s, \quad r \leq t \leq T
\end{equation}
on a fixed complete probability space  $(\Omega,\{\mathcal F^W_{t}\}_{0 \leq t \leq T},\mathbf{P})$ that supports a Brownian motion $\{W_t\}_{t \geq 0}$, where $\{\mathcal F^W_{t}\}_{0 \leq t \leq T}$ is the filtration of $W$ on $[0,T]$ after completion.

\begin{definition}
\label{strong_def} 
A family $\{X_{r,t}^x \mid 0 \leq r \leq t \leq T\}$ is a family of strong solutions if, for every $r \in [0,T[$  and $x \in \mathbb R^d$, the process $t \mapsto X_{r,t}^x$, $r \in [r,T]$, is continuous and adapted to $\mathcal F^W_{r,t}:=\sigma(W_s-W_r \mid r \leq s \leq t)$ (after completion), and
$$
\mathbf{P}\bigg(X_{r,t}^x=x + \int_r^t \sigma(s,X^x_{r,s})\, dW_s \text{ for every } t \in [r,T]\bigg)=1
$$
\end{definition}

\begin{theorem}
\label{thm1}
Let $d \geq 3$. Assume that the diffusion coefficients $\sigma \in [L^{\infty}(\mathbb R^{1+d})]^{d \times d}$ satisfy the uniform ellipticity condition: for every $t \in \mathbb R$
$$
\sigma(t,\cdot) \sigma(t,\cdot)^{\top} \geq \kappa I \quad \text{a.e.\,on $\mathbb R^{d}$ for some fixed $\kappa>0$}
$$
and have \textit{form-bounded} derivatives, i.e.\,condition \eqref{nabla_sigma} holds.

We also impose the following auxiliary condition on the time-dependence of $\sigma$:

\begin{itemize}

\item[{\rm ($\mathbf{H}$)}] 
$$
\sup_{y \in \mathbb R^d}\|\sigma(t,\cdot)-\sigma(s,\cdot)\|_{L_{\rho_y}^2(\mathbb R^d)} \leq C|t-s|^\gamma \quad t,s \in \mathbb R, |t-s| \leq 1
$$
with the H\"{o}lder continuity exponent $\gamma \in ]0,1]$.
\end{itemize}

Let  $$\sigma_n \in \big[L^\infty(\mathbb R, C_b^\infty(\mathbb R^d))\big]^{d \times d} \cap [C_{\text{unif},\loc}^\gamma(\mathbb R,L^2_\rho(\mathbb R^d))]^{d \times d}$$ be a sequence of bounded matrix fields having bounded  derivatives in the spatial variables, $$\|\sigma_n\|_{L^{\infty}(\mathbb R^{1+d})} \leq \|\sigma\|_{L^{\infty}(\mathbb R^{1+d})}, \quad \sigma_n\sigma_n^{\top} \geq \frac{\kappa}{4}I$$ such that 
$$
\sigma_n \rightarrow \sigma \quad \text{ in $[L^\infty_{\loc}(\mathbb R,L^2_{\rho}(\mathbb R^d))]^{d \times d}$},
$$
and that satisfy \eqref{nabla_sigma} and ${\rm (\mathbf{H})}$ uniformly in $n$, i.e.\,with the same parameters $\nu$, $c_\nu$ and $C$, $\gamma$. It is not difficult to see that the spatial mollification of $\sigma$ produces such $\{\sigma_n\}$, see Appendix \ref{approx_app}. By the classical theory, for each $n$, there exists a pathwise unique strong solution $X_{r,t}^{x,n}$ to the approximating equation
\begin{equation} 
\label{sde1_n}
X_{r,t}^{x,n}=x + \int_r^t \sigma_n(s,X^{x,n}_{r,s})\, dW_s,
\end{equation}
considered on the same (fixed) complete probability space  $(\Omega,\{\mathcal F^W_{t}\}_{0 \leq t \leq T},\mathcal F^W,\mathbf{P})$.

Then, if the form-bound $\nu$ of $|\nabla \sigma|$ in \eqref{nabla_sigma} is smaller than a certain constant that only depends on the dimension $d$, the ellipticity constant $\kappa$ and $\|\sigma\|_{L^\infty(\mathbb R^{1+d})}$, the following are true:

\begin{itemize}
    \item[(i)] There exists a subsequence $\{\sigma_{n_k}\}$  and a continuous random field $X$ such that, for every $N \geq 1$,
$$
\sup_{0\leq r\leq t\le T, |x|\le N} |X_{r,t}^{x,n_k}-X_{r,t}^x| \rightarrow 0 \quad \mathbf{P}-a.s.,
$$
where $X^x_{r,t}$ is a strong solution of SDE \eqref{sde1_}
on $(\Omega,\{\mathcal F^W_t\}_{0 \leq t \leq T},\mathcal F^W,\mathbf{P})$.

\medskip

    \item[(ii)] The operators
    \[
    P_\sigma^{r,t}f(x) := \mathbf{E}[f(X^x_{r,t})], \quad f \in C_\infty, \quad r,t \in [0,T], r<t,
    \]
    constitute a backward Feller propagator on $C_\infty$, 
and, furthermore, if $\{\sigma_n\}$ additionally satisfies
$$
\nabla a_n \rightarrow \nabla a \quad \text{ in } L_{\loc}^2(\mathbb R,L^2_\rho(\mathbb R^d)),
$$
where $a=\sigma\sigma^{\top}$, $a_n=\sigma_n(\sigma_n)^{\top}$ (once again, mollifying $\sigma$ in the spatial variables produces this convergence, see Appendix \ref{approx_app}), then
$$
P_\sigma^{r,t}=s\mbox{-}C_\infty\mbox{-}\lim_n P_{\sigma_n}^{r,t} \quad \text{loc.\,uniformly in $r,t \in [0,T]$, $r<t$.}
$$
There is no need to pass to a subsequence of $\{\sigma_n\}$. In this sense, the law of $X^x_{r,t}$ does not depend on the choice of the subsequence in {\rm (\textit{i})}.
\end{itemize}
\end{theorem}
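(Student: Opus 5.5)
The proof follows the Röckner--Zhao scheme, with compactness of $\{X^{x,n}_{r,t}\}_n$ in the Wiener--Sobolev space as the central point. It rests on two a priori PDE estimates for the backward Kolmogorov equation with smooth $\sigma_n$, with constants independent of $n$. The first is the gradient bound of Proposition \ref{prop_grad}. It uses the smallness of the form-bound $\nu$ through $|\nabla\sigma_n|\in\mathbf F_\nu$, testing the equation for $\nabla u$ against $|\nabla u|^{p-2}\nabla u\,\rho$ in the weighted spaces $L^p_{\rho_y}$ and absorbing the $\nabla\rho$ terms via \eqref{two_est} with $\theta$ small. The second is the simplex estimate of Proposition \ref{prop1}, which bounds quantities of the form $\mathbf E\int_{r<s_1<\dots<s_k<t}\prod |g_i(s_i,X^{n}_{s_i})|^2$ for form-bounded $g_i$ such as $|\nabla\sigma_n|$. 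With these in hand, the plan has three stages: uniform Malliavin bounds giving strong compactness, identification of the limit, and the Feller propagator.

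\textbf{Malliavin bounds and compactness.} The Malliavin derivative $D_\tau X^{x,n}_{r,t}$ satisfies the linear equation
\[
D_\tau X_t^n=\sigma_n(\tau,X^n_\tau)+\int_\tau^t \nabla\sigma_n(s,X^n_s)\,D_\tau X^n_s\,dW_s .
\]
I would show that $\sup_n\mathbf E\int_r^T|D_\tau X^{x,n}_{r,t}|^2\,d\tau$ is finite. I would also prove the fractional regularity bound
\[
\mathbf E\int\!\!\int |D_\tau X^n_t-D_{\tau'}X^n_t|^2\,|\tau-\tau'|^{-1-2\beta}\,d\tau\, d\tau'
\]
uniformly in $n$, which is where ($\mathbf H$) enters: the difference $\sigma_n(\tau,\cdot)-\sigma_n(\tau',\cdot)$ appears and is controlled in $L^2_{\rho_y}$. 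Itô's formula for $|D_\tau X_t^n|^2$ produces the term $\int|\nabla\sigma_n(s,X^n_s)|^2|D_\tau X^n_s|^2\,ds$. Controlling it is, I expect, the main obstacle. Iterating would produce iterated Itô integrals that BDG cannot sum, so I would instead use a stochastic Gronwall inequality, in the form $\mathbf E\sup_t (Z_t)^{q}\lesssim \mathbf E[\exp(c A_T)]\cdot(\text{data})$ with $A_T=\int_r^T|\nabla\sigma_n(s,X_s^n)|^2\,ds$. Exponential moments of $A_T$ would follow from Proposition \ref{prop1} (Khasminskii's lemma via the simplex estimate), which is what Proposition \ref{prop2} should amount to. The needed bound holds provided $\nu$ is small, which is why the smallness condition on $\nu$ is imposed in the statement.

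\textbf{Identification of the limit.} The uniform Malliavin bounds, together with moments of $X^n$, feed into the Da Prato--Malliavin--Nualart compactness criterion and give relative compactness of $X^{x,n}_{r,t}$ in $L^2(\Omega)$ for each fixed $(r,t,x)$. Kolmogorov-type bounds $\mathbf E|X^{x,n}_{r,t}-X^{x',n}_{r',t'}|^p$, obtained from BDG and the gradient bounds, upgrade this to tightness of the random fields. A diagonal argument then yields a.s. uniform convergence on $\{0\le r\le t\le T,\ |x|\le N\}$ along a subsequence, proving (i). To pass to the limit in $\int\sigma_n(s,X^n_s)\,dW_s$, I would show
\[
\mathbf E\int|\sigma_n-\sigma|^2(s,X^n_s)\,ds\to0 \quad\text{and}\quad \mathbf E\int|\sigma(s,X^n_s)-\sigma(s,X_s)|^2\,ds\to0 .
\]
The first follows from a Krylov-type occupation estimate $\mathbf E\int f(s,X^n_s)\,ds\lesssim\|f\|$ in a weighted $L^2_\rho$-based norm, which is a consequence of the PDE estimates, together with $\sigma_n\to\sigma$ in $L^\infty_{\loc}L^2_\rho$. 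The second follows by approximating $\sigma$ by a continuous $\sigma_m$ and using the same occupation estimate uniformly. Adaptedness to $\mathcal F^W_{r,t}$ is preserved under a.s. limits.

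\textbf{Feller propagator.} For (ii), I would show that $P^{r,t}_{\sigma_n}f$ is Cauchy in $C_\infty$. For smooth $f$, write the difference as $\int_r^t P^{r,s}_{\sigma_n}(L_n-L_m)P^{s,t}_{\sigma_m}f\,ds$, where $L_n=\tfrac12 a_n\!:\!\nabla^2$, rewritten in divergence form using $\nabla a_n$. Estimate it in $L^p_{\rho_y}$ using the gradient bounds and $\nabla a_n\to\nabla a$, and then upgrade to sup-norm via Moser iterations in the weighted spaces, which are uniform in $y$ and give $C_\infty$ convergence. The limit is independent of the subsequence, so it coincides with $\mathbf E f(X^x_{r,t})$ from (i). The propagator property and the Feller property then pass to the limit.
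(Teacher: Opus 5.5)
You have the paper's architecture right: the simplex estimate, the stochastic Gronwall bound with the exponential moment summed through the $n!$-growth of the simplex series, the occupation estimate from Proposition \ref{prop_grad}, and Moser iterations for the propagator. The compactness step, however, fails as you formulate it.

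\textbf{The gap.} Every estimate you have for $X^{x,n}$ is averaged over the starting point. Proposition \ref{prop1} and Corollary \ref{cor1} bound $\mathbf E\bigl(\int_r^t|\nabla\sigma_n(s,X^{x,n}_{r,s})|^2ds\bigr)^k$ only in $L^2_\rho(dx)$. Summing the exponential series therefore gives $\int_{B_1(y)}\mathbf E\,e^{cA_T}\,dx<\infty$, not $\mathbf E\,e^{cA_T}<\infty$ at a fixed $x$. Your $(\mathbf H)$-term is likewise controlled only through the $L^2_\rho$-contractivity of Proposition \ref{contr_prop}. So the Malliavin bounds hold only in $L^{2p}(B_1(y),L^p(\Omega))$.

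A pointwise Khasminskii argument is not available either. It would need $\sup_x\mathbf E\int_r^T|\nabla\sigma_n(s,X^{x,n}_{r,s})|^2ds$ bounded uniformly in $n$, and this fails for the model singularity \eqref{hardy_sigma}. There $|\nabla\sigma|^2=C|x|^{-2}$, and $|X^0_s|^2$ is a squared Bessel process of dimension $\delta=1+\frac{d-1}{1+c}>2$. Hence $\mathbf E\int_0^t|\nabla\sigma(X^0_s)|^2ds=C\int_0^t\frac{ds}{(\delta-2)s}=\infty$.

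Consequently you cannot apply the Da Prato--Malliavin--Nualart criterion at each fixed $(r,t,x)$. Averaged Malliavin bounds alone also do not give compactness in $L^2(U\times\Omega)$: the deterministic fields $F_n(x)=\sin(nx_1)$ have $DF_n=0$ and no convergent subsequence.

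\textbf{The missing ingredient} is the uniform averaged Jacobian bound $\|\nabla_xX^{\cdot,n}_{r,t}-I\|_{L^{2p}(B_1(y),L^p(\Omega))}\le K_1$, which is Proposition \ref{prop2}(i). It is proved by the same stochastic Gronwall and simplex argument, applied to $d\nabla X=\nabla\sigma_n(X)\nabla X\,dW$. It is used twice.
\begin{itemize}
\item It supplies \eqref{a.1} in the R\"{o}ckner--Zhao criterion, Lemma \ref{lem_compact}, which gives compactness in $L^2(B_N\times\Omega)$. You then pass to a subsequence with summable $L^2$-errors. This yields a.s.\ convergence for $x$ in a countable dense subset $D$ of the full-measure set where those errors are summable.
\item Through the Sobolev embedding $W^{1,r}\subset C^{1-d/r}$, it gives, for all $x,y$ and uniformly in $n$, $\mathbf E|X^{x,n}_{s,t}-X^{y,n}_{s,t}|^r\le C(|x-y|^{r-d}+|x-y|^r)$. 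Combined with the flow property for increments in the initial time, this makes Kunita's Kolmogorov--Chentsov theorems applicable. That is what upgrades convergence on $D$ to locally uniform convergence.
\end{itemize}
Proposition \ref{prop_grad} cannot substitute here. It regularizes $x\mapsto\mathbf Ef(X^{x,n}_{r,t})$, not the random map $x\mapsto X^{x,n}_{r,t}$ itself.

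\textbf{A smaller omission in the same step.} In the fractional Malliavin estimate, $(\mathbf H)$ handles only the time increment. You must also control the spatial increment $\sigma_n(s',X_s)-\sigma_n(s',X_{s'})$ uniformly in $n$, even though $\sigma(s',\cdot)$ is discontinuous. The paper inserts a spatial mollification $\sigma_\eta$ and uses $\|\sigma-\sigma_\eta\|_{L^2_\rho}\le C\eta$ together with $\mathbf E|X_s-X_{s'}|^{2p}\lesssim|s-s'|^p$. It then takes $\eta=|s-s'|^{1/4}$.
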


\begin{example} 
\label{kr_zv_example}

The celebrated Krylov-Zvonkin's counterexample to strong existence shows that the form-bound $\nu$ of $|\nabla \sigma|$ in \eqref{nabla_sigma} cannot be greater than $12$ in dimension $d=4$; this demonstrates that the smallness of $\nu$ assumed in Theorem \ref{thm1} is necessary for strong existence (or rather than $\nu$ cannot be too large). Namely, in $\mathbb R^4$, consider SDE 
\begin{equation}
\label{sde_ctr}
dX_t=\sigma(X_t)dW_t, \quad X_0=0,
\end{equation}
with diffusion matrix
$$
\sigma(x)=\frac{1}{|x|}\left(
\begin{array}{cccc}
x_1&-x_2&-x_3&-x_4\\
x_2&x_1&-x_4&x_3\\
x_3&x_4&x_1&-x_2\\
x_4&-x_3&x_2&x_1
\end{array}
\right),
$$
define $\sigma(0)$ arbitrarily.
Then $\sigma\sigma^{\top}=I$, i.e.\,in this case the ellipticity constant $\kappa=1$. On the other hand, one shows, using L\'{e}vy's characterization of Brownian motion, that $$B_t:=\int_0^t \sigma^{\top}(W_s)dW_s$$ is a Brownian motion. We have $W_t=\int_0^t \sigma(W_s)dB_s$, hence $t \mapsto (B_t,W_t)$ is a weak solution to SDE \eqref{sde_ctr}. At the same time, $(-B_t,W_t)$ is also a weak solution to the same SDE. Therefore, pathwise uniqueness fails. One can prove that every solution to this SDE is Brownian, so there is uniqueness in law. If we had strong existence, then the uniqueness in law would imply pathwise uniqueness, a contradiction. See details in  \cite{KZ}.

We can evaluate the form-bound $\nu$ of $|\nabla \sigma|$. We have
$$
|\nabla \sigma|^2=\sum_{i,j,k=1}^4 |\nabla_k \sigma_{ij}|^2=\frac{12}{|x|^2} 
$$
since each entry of the form $\frac{x_l}{|x|}$ in $\sigma$ occurs exactly four times, and $\sum_{l=1}^4 |\nabla \frac{x_l}{|x|}|^2=\frac{3}{|x|^2}$. Now, invoking the Hardy inequality on $\mathbb R^4$ with sharp constant, i.e.\,$\||x|^{-1}\varphi\|_2^2 \leq \|\nabla \varphi\|_2^2$, $\varphi \in C_c^{\infty}(\mathbb R^d)$, we obtain that the form-bound $\nu$ of $|\nabla \sigma|$ is exactly $12$. 
\end{example}

\subsection{About the proof of Theorem \ref{thm1} and its possible extension} 
\label{proof_struct_sect}
1.~In the next Theorem \ref{thm2} we consider the SDE with additive noise  and singular (form-bounded) drift. The task of combining such drift and the diffusion coefficients as in Theorem \ref{thm1} now seems to be achievable with some additional work, given that the principal estimates -- Proposition \ref{prop1} and \cite[Prop.\,1]{KiM_strong} -- are proved within a compatible framework. We plan to write down the details elsewhere.

2. The proof of Theorem \ref{thm1}(i) follows the compactness approach of R\"{o}ckner and Zhao. Proposition \ref{prop1}, together with the stochastic Gronwall-type estimate, yields Proposition \ref{prop2}, which provides uniform estimates for the spatial and Malliavin derivatives of the approximating solutions. These estimates verify the hypotheses of the Wiener-Sobolev compactness criterion in Lemma \ref{lem_compact}. A diagonal argument then gives, after passing to a subsequence, convergence on a fixed countable dense subset of $\Delta_2(T)\times\mathbb R^d$.

The increment estimates derived from Proposition \ref{prop2}, together with
the Kolmogorov-Chentsov theorem, yield continuity of the limiting
random field and stochastic equicontinuity of the approximating
random fields. A finite-net argument and the Borel-Cantelli
lemma then upgrades the convergence on the dense set to a.s.
local uniform convergence on $\Delta_2(T)\times\mathbb R^d$.

It remains to pass to the limit in the approximating SDEs.
Proposition \ref{prop_grad} yields the occupation estimate used in Lemma \ref{lem_conv}. The construction of the limiting occupation measure then allows us
to prove convergence of the stochastic integrals and to show that the limit found at the previous step is a strong solution.

At this last step, we could instead appeal to Krylov's classical
occupation-time estimate, which applies to bounded uniformly
nondegenerate diffusion coefficients, i.e.\,without requiring the
form-boundedness condition on their derivatives. Since our eventual
goal is to add a singular drift to SDE \eqref{sde1}, we use the longer
argument based on Proposition \ref{prop_grad} which is more amenable for extensions in our context.

The proof of Theorem \ref{thm1}(\textit{ii}) uses PDE arguments only. The gradient bounds of
Proposition \ref{prop2} allow us to run Moser iterations for the differences
of the approximating Kolmogorov equations. This yields strong
convergence in $C_\infty$ of the corresponding Feller propagators,
locally uniformly in the time parameters, and gives the backward
Feller propagator.

\subsection*{Acknowledgements} We are deeply grateful to Kodjo Rapha\"{e}l Madou for valuable discussions and for his participation in the early stages of this work.

\bigskip

\section{Remarks on additive noise and singular drift} 
\label{sing_drift_sect}

Let $d \geq 3$.
Consider SDE 
\begin{equation}
\label{sde_b}
dX_t=b(t,X_t)dt + dW_t, \quad X_0=x \in \mathbb R^d,
\end{equation}
having form-bounded drift $b$:

\begin{definition}
\label{fbd_def}
A Borel measurable vector field $b  :\mathbb R^{1+d} \rightarrow \mathbb R^d$ with $|b| \in L^2_{\loc}(\mathbb R^{1+d})$ is said to be form-bounded  if 
there exist constant $\delta>0$ such that the following quadratic form inequality holds for a.e.\,$t \in \mathbb R$:
\begin{align}
\label{fbb_inhom}
\|b(t,\cdot)\varphi\|_2^2   \leq \delta \|\nabla \varphi\|_2^2 + g_\delta(t)\|\varphi\|_2^2 \qquad \forall\,\varphi \in C_c^\infty(\mathbb R^d)
\end{align}
for some function $g_\delta \in L_{\loc}^1(\mathbb R)$. 

This is written as $b\in \mathbf{F}_{\delta,g}$.
\end{definition}

The function $g_\delta$ is required to handle drifts that can be locally unbounded in time, cf.\,examples below.

\begin{examples} We mention the following examples/sub-classes of form-bounded drifts defined in elementary terms; there are some repetitions with the examples before Theorem \ref{thm1}.

1.~The ``spatial endpoint'' of the Ladyzhenskaya-Prodi-Serrin class \eqref{lps}:
$$
b \in L^\infty([0,\infty[,L^d(\mathbb R^d)+L^\infty(\mathbb R^d)) \quad \Rightarrow \quad b \in \mathbf{F}_{\delta,g_\delta}
$$
for appropriate $\delta$ and bounded $g_\delta$; let us omit $\mathbb R^d$ in $L^p(\mathbb R^d)$ from now on.
Indeed, writing $b=b_1+b_2$, where $b_1 \in L^\infty(\mathbb R,L^d)$, $b_2 \in L^\infty(\mathbb R,L^\infty)$, we have for a.e.\,$t \in \mathbb R$,
\begin{align*}
\|b(t,\cdot)\varphi\|_2^2 
& \leq (1+\varepsilon)\|b_1(t,\cdot)\|_{L^d}^2 \|\varphi\|_{L^\frac{2d}{d-2}}^2 + (1+\varepsilon^{-1})\|b_2(t,\cdot)\|_{L^\infty}^2 \|\varphi\|_{L^2}^2 \qquad (\varepsilon>0)\\
& (\text{apply the Sobolev embedding}) \\
& \leq C_S (1+\varepsilon) \|b_1(t,\cdot)\|_{L^d}^2 \|\nabla \varphi\|_{L^2}^2 + (1+\varepsilon^{-1})\|b_2(t,\cdot)\|_{L^\infty}^2 \|\varphi\|_{L^2}^2,
\end{align*}
so $b \in \mathbf{F}_{\delta,g_\delta}$ with $\delta:=\sup_{t \in \mathbb R}C_S (1+\varepsilon) \|b_1(t,\cdot)\|_{L^d}^2$ and $g_\delta(t)=(1+\varepsilon^{-1})\|b_2(t,\cdot)\|_{L^\infty}^2$.

Let us note that if $b$ is time-homogeneous, then, by considering cutoff of $b$, we can find, for every $\varepsilon>0$, vector fields $b_1$ and $b_2$ as above such that $\|b_1\|_{L^d}<\varepsilon$, and so the form-bound $\delta$ of $b$ is, in fact, arbitrarily small; this comes at the expense of having $\|b_2\|_{L^\infty}$ large. 

2.~Recall that a Borel measurable function $h:\mathbb R^d \rightarrow \mathbb R$ is in $L^{d,\infty}$ if $\|h\|_{L^{d,\infty}}:=\sup_{s>0}s|\{x \in \mathbb R^d: |h(x)|>s\}|^{1/d}<\infty$. By the Strichartz inequality with sharp constants \cite[Prop.~2.5, 2.6, Cor.~2.9]{KPS}, if, say, $b=b(x)$ with $|b| \in L^{d,\infty}$, then 
\begin{align*}
b \in \mathbf{F}_{\delta_1,0} \quad \text{ with } \sqrt{\delta_1}&=\||b| (\lambda - \Delta)^{-\frac{1}{2}} \|_{2 \rightarrow 2} \\ & \leq
\|b\|_{d,\infty} \Omega_d^{-\frac{1}{d}} \||x|^{-1} (\lambda - \Delta)^{-\frac{1}{2}} \|_{2 \rightarrow 2} \\ & \leq \|b\|_{d,\infty} \Omega_d^{-\frac{1}{d}}2^{-1}\frac{\Gamma\bigl(\frac{d-2}{4} \bigr)}{\Gamma\bigl(\frac{d+2}{4} \bigr)}=\|b\|_{d,\infty} \Omega_d^{-\frac{1}{d}} \frac{2}{d-2},
\end{align*}
where $\Omega_d=\pi^{\frac{d}{2}}\Gamma(\frac{d}{2}+1)$ is the volume of the unit ball in $\mathbb R^d$.

3. Extending the previous example, we obtain that
$$
|b| \in L^\infty(\mathbb R,L^{d,\infty}(\mathbb R^d)) \quad \Rightarrow \quad b \in \mathbf{F}_{\delta,0},
$$
with $\delta \approx \sup_{t \in \mathbb R}\|b(t,\cdot)\|^2_{L^{d,\infty}(\mathbb R^d)}$.
This class contains the class $|b| \in L^\infty(\mathbb R,L^{d}(\mathbb R^d)) $ from the first example. The set-theoretic difference between the latter and $L^\infty(\mathbb R,L^{d,\infty}(\mathbb R^d))$ contains e.g.\,the attracting drift
\begin{equation}
\label{hardy_drift}
b(x):=-\frac{d-2}{2}\sqrt{\delta}\frac{x}{|x|^2},
\end{equation}
that, by the Hardy inequality or by the calculation in 2, is in $\mathbf{F}_{\delta,0}$. This drift provides a counterexample to the weak existence if the ``strength of attraction to the origin'' $\delta$ is too large, see Section \ref{weak_sol_sect} for details. 

The counterexample to weak existence discussed in that section already shows that there is an important distinction between the class $|b| \in L^\infty(\mathbb R,L^d(\mathbb R^d))$ and the class of form-bounded drifts $b \in \mathbf{F}_\delta$. Namely, the former does not cover such strong attraction/blow-up phenomena, despite being critical under the diffusive scaling.

4.~The weighted Hardy inequality of Hoffmann-Ostenhof--Laptev \cite{HL}. Fix $$0 \leq \Phi \in L^s(S^{d-1})\quad \text{ for some } s \geq \frac{2(d-2)^2}{2(d-1)}+1,$$ where $S^{d-1}$ is the unit sphere in $\mathbb R^d$. If
$$
|b(x)|^2 \leq \delta \frac{(d-2)^2}{4} c\frac{\Phi(x/|x|)}{|x|^2}, \qquad \text{where $c:=\frac{|S^{d-1}|^{\frac{1}{s}}}{\|\Phi\|_{L^s(S^{d-1})}}$},
$$
then $b \in \mathbf{F}_\delta$ with $g_\delta=0$.

5.\,Let $K \in \mathbf{F}_\kappa(\mathbb R^d)$. Then the many-particle drift $b=(b_1,\dots,b_N):\mathbb R^{dN} \rightarrow \mathbb R^{dN}$ with components defined by
\begin{equation*}
b_i(x^1,\dots,x^N):=\frac{1}{N}\sum_{j=1, j \neq i}^N K(x^i-x^j), \quad x^i,x^j \in \mathbb R^d
\end{equation*}
is in $\mathbf{F}_\delta(\mathbb R^{dN})$ with $\delta=\frac{(N-1)^2}{N^2}\kappa.$

6.~Fix a small $\varepsilon>0$. If $\sup_{t \in \mathbb R}\|b(t,\cdot)\|_{M_{2+\varepsilon}}<\infty$, then $b \in \mathbf{F}_\delta$ with $\delta$ proportional to the supremum, and $g_\delta=0$. Here
\begin{equation}
\label{morrey}
\|f\|_{M_{2+\varepsilon}}:=\sup_{r>0,x \in \mathbb R^d} r \biggl(\frac{1}{r^d}\int_{B_r(x)}|f(x)|^{2+\varepsilon} dx \biggr)^{\frac{1}{2+\varepsilon}}
\end{equation}
is the Morrey norm. The inclusion is a consequence of the Adams inequality \cite[Theorem 7.3]{A}. 

We note that, for each $0<\varepsilon<\varepsilon_0$, there exist vector fields $b=b(x)$ in the Morrey class $M_{2+\varepsilon}$ such that $|b| \not \in L_{\loc}^{2+\varepsilon_0}(\mathbb R^{d}).$

7.  Chang-Wilson-Wolff \cite{CWW} identified a larger than $\cup_{\varepsilon>0}M_{2+\varepsilon}$ sub-class of $\mathbf{F}_\delta \cap \{b=b(x)\}$. That is, they demonstrated that if $|b| \in L^2_{\loc}(\mathbb R^d)$ and
\begin{equation}
\label{cww}
\sup_{r>0, x \in \mathbb R^d} \frac{1}{|B_r|}\int_{B_r(x)} |b(y)|^2\, r^2 \xi\big(|b(y)|^2\,r^2 \big) dy<\infty,
\end{equation}
where
$\xi:\mathbb R_+ \rightarrow [1,\infty[$ is a fixed increasing function such that
$$
\int_1^\infty \frac{ds}{s\xi(s)}<\infty,
$$
then $b \in \mathbf{F}_\delta$ with appropriate $\delta$.
For instance, one can take $\xi(s)=1+(\log^+s)^{1+\epsilon}$ or $\xi(s)=1+\log^+s (\log\log^+s)^{1+\epsilon}$ for some $\epsilon>0$ (but not $\xi(s)=1+\log^+s$).

8.~In Theorem \ref{thm2} we impose a more restrictive condition on $g_\delta$, namely, it must be in $L^{1+\varepsilon}(\mathbb R)$ for some arbitrarily small but fixed $\varepsilon>0$. For instance,
$$
|b(t,x)| \leq |t-t_0|^{-\frac{1}{2}+\varepsilon_1} \quad \text{ for some $\varepsilon_1>0$, for $t_0$ fixed,}
$$
satisfies this requirement. Note that, due to this $\varepsilon>0$, the Ladyzhenskaya-Prodi-Serrin class of drifts \eqref{lps} handled by \cite{RZ} is not entirely contained in the class of form-bounded drifts of Theorem \ref{thm2} unless $q=\infty$ in \eqref{lps}, i.e.\,one focuses on the maximal admissible spatial singularities of the drift.
\end{examples}

The following theorem strengthens the result in \cite{RZ} and yields strong existence/conditional uniqueness for drifts that can introduce strong attraction phenomena, e.g.\,as in example 3.

The authors of \cite{RZ}, in turn, strengthened Krylov-R\"{o}ckner \cite{KR}, i.e.\,replaced the sub-critical Ladyzhenskaya-Prodi-Serrin class ($<1$) with the critical one ($=1$), see \eqref{lps}, with the exception of the pathwise uniqueness since, in the setting of \cite{KR}, one can prove unconditional pathwise uniqueness, see Zhang \cite{Z}. 

Let us also add that the strong solutions to SDE \eqref{sde_b} with $b$ in the Ladyzhenskaya-Prodi-Serrin class \eqref{lps}, but only for a.e.\,initial point $x \in \mathbb R^d$ -- possibly excluding the singular set of the drift -- were constructed earlier in \cite{BFGM} via the regularity theory of the stochastic transport/continuity equations.\footnote{We note that the authors of \cite{KSS} extended  the relevant parts of \cite{BFGM} to time-homogeneous form-bounded drifts, so, strictly speaking, the extension of \cite{RZ} to form-bounded drifts in \cite{KiM_strong} was consistent with what was done earlier.}

In Theorem \ref{thm2} we work on a finite time interval, so, without loss of generality, we impose from now on the global $L^1$ condition on $g_\delta$.

\begin{theorem}[{\cite{KiM_strong}}, but with a few technical conditions on the drift removed, see remark below]
\label{thm2}
Let $d \ge 3$, $T>0$. Assume that $b \in \mathbf{F}_{\delta, g}$ and that there exists $\varepsilon>0$ such that the function $g_\delta$ from \eqref{fbb_inhom} additionally satisfies
$$
g_\delta \in L^{1+\varepsilon}(\mathbb{R}).
$$
Let $\{b_n\} \subset (C^\infty \cap L^\infty)(\mathbb R^{1+d},\mathbb R^d)$ be a sequence of vector fields such that
$$
b_n \rightarrow b \quad \text{ in } [L_{\loc}^2(\mathbb R,L^2_{\rho}(\mathbb R^d))]^d,
$$
are form-bounded with the same form-bound $\delta$ having $\sup_n\|g_{\delta,n}\|_{L^{1+\varepsilon}(\mathbb R)}<\infty$. (For example, the standard mollifier in $\mathbb R^{1+d}$ applied to $b$ yields $b_n$, see Appendix \ref{Conv_b_n} for details.) 

If the form-bound $\delta$ of drift $b$ is sufficiently small, then 
 the following are true:

\begin{itemize}
    \item[(i)] There exists a subsequence $\{b_{n_k}\}$ such that for any initial time $r \in [0,T[$ and every initial point $x \in \mathbb{R}^d$ the approximating strong solutions $\{X^{x,n_k}_{r,t}\}_{k=1}^\infty$ to
\begin{equation*}
X_{r,t}^{x,n}=x+\int_r^t b_n(s,X_{r,s}^{x,n})ds + W_t-W_r, 
\end{equation*}
considered on a fixed complete probability space $(\Omega,\{\mathcal F^W_t\}_{0 \leq t \leq T},\mathcal F^W,\mathbf{P})$,
	converge for every $t \in [r,T]$ $\mathbf{P}$-a.s.\,to a strong solution $X^x_{r,t}$ of SDE 
	\begin{equation*}
X_{r,t}^{x}=x+\int_r^t b(s,X_{r,s}^{x})ds + W_t-W_r, \quad t \in [r,T]. 
\end{equation*}

\item[(ii)] 
The constructed strong solution $X^x_{r,t}$ satisfies the following Krylov-type bound. For every $q \in(d,\delta^{-1/2})$ and any vector field $f \in \mathbf{F}_{\beta}$, $\beta < \infty$, there exists a constant $C$, depending only on $d$, $q$, $\delta$, $g_\delta$, $\beta$, $g_\beta$ and $T$, such that
    \begin{equation} \label{kr1}
        \mathbf{E}\int_r^T
        \big|fh\left(\tau,X_{r,\tau}^x\right) \big| d\tau  \le C\left\| f |h|^{q/2} \right\|_{L^2([r,T],L^2_{\rho}(\mathbb{R}^d))}^{2/q} \quad  \text{for all }   h \in C_c \left([r,T]\times\mathbb{R}^d\right).
\end{equation}

\item[(iii)] 
The constructed solution $X_{r,t}^x$ is pathwise unique among strong solutions to \eqref{sde_b} that satisfy \eqref{kr1} for some
$q \in (d,\delta^{-1/2})$, with $|f|=1$ and with $|f|=|b|$.

    \item[(iv)] The operators
    \begin{equation}
\label{T_b}
    T_b^{r,t}f(x) := \mathbf{E}[f(X^x_{r,t})], \quad f \in C_\infty,
    \end{equation}
    constitute a backward Feller propagator on $C_\infty$, 
and, furthermore, we have
$$
T_b^{r,t}=s\mbox{-}C_\infty\mbox{-}\lim_n T_{b_n}^{r,t} \quad \text{loc.\,uniformly in $r,t \in [0,T]$}
$$
regardless of the choice of the sequence $\{b_n\}$ as long as it satisfies the preceeding uniform form-boundedness assumptions. In this sense, the law of $X^x_{r,t}$ does not depend on the choice of the subsequence in {\rm (\textit{i})}.

\end{itemize}

\end{theorem}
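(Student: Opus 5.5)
Following the structure outlined in Section \ref{proof_struct_sect}, the proof of Theorem \ref{thm2} proceeds through the R\"{o}ckner--Zhao compactness scheme on the Wiener--Sobolev space, with the PDE input strengthened as in \cite{KiM_strong}. The first step is an a priori analysis of the backward Kolmogorov equation $(\partial_t+\frac12\Delta+b_n\cdot\nabla)u=-f$ in the weighted spaces $L^2_{\rho_y}$. Testing against $u|u|^{q-2}\rho_y$ and using $b_n\in\mathbf{F}_\delta$ uniformly together with \eqref{two_est} with small $\theta$, we get energy bounds for $\nabla(|u|^{q/2})$ that do not depend on $n$ provided $q<\delta^{-1/2}$. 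The integrability $g_\delta\in L^{1+\varepsilon}$ is used to absorb the lower-order term through a Gronwall-type argument in time. Next we run Moser iterations, using the Sobolev embedding and $q>d$, to obtain sup-bounds on $u$ and on $\nabla u$. This gives the Krylov-type bound \eqref{kr1} for the approximating solutions $X^{x,n}$, with constants independent of $n$. We will also need the simplex estimate \cite[Prop.\,1]{KiM_strong}, i.e.\ bounds on $\mathbf{E}\int_{\Delta_k}\prod_i |b_n|(\tau_i,X^{x,n}_{r,\tau_i})\,d\tau$ that decay geometrically in $k$.

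The second step is compactness. Here we use that $D_sX^{x,n}_{r,t}$ and $\nabla_x X^{x,n}_{r,t}$ solve the linear equation $dJ=\nabla b_n(X^{x,n})J\,dt$. Since the noise is additive, $J$ is the time-ordered exponential of $\int\nabla b_n(X^{x,n})$. Expanding it as a Dyson series, each term is an iterated Lebesgue integral of $\nabla b_n$ along the path. These terms are converted via It\^{o}/Zvonkin-type transformations into simplex integrals of $|b_n|$, which the simplex estimate controls. Summing the geometric series yields
$$
\sup_n\mathbf{E}\Big[|D_sX^{x,n}_{r,t}|^2+|\nabla_xX^{x,n}_{r,t}|^2\Big]<\infty,
$$
together with the fractional-in-$s$ bound on $D_s X - D_{s'}X$. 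This verifies the Wiener--Sobolev compactness criterion of Lemma \ref{lem_compact}, in the Da Prato--Malliavin--Nualart form. A diagonal argument then gives a subsequence $b_{n_k}$ along which $X^{x,n_k}_{r,t}$ converges in $L^2(\Omega)$, and along a further subsequence a.s., for all rational $r,t,x$. Density and increment bounds extend this to every $r,t,x$.

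The third step identifies the limit. The uniform bound \eqref{kr1} with $f=b_n-b$ or $f=1$, combined with $b_n\to b$ in $L^2_{\loc}L^2_\rho$ and a truncation argument, gives $\mathbf{E}\int_r^t|b_{n_k}(X^{n_k})-b(X)|\,d\tau\to 0$. Hence the limit is a strong solution, and passing to the limit in \eqref{kr1} proves (ii). For (iii), take two solutions satisfying \eqref{kr1}. Via Cherny's dual Yamada--Watanabe principle, it suffices to show that they coincide in law with $T_b^{r,t}$ and that each is adapted. The law identification follows from It\^{o}'s formula applied to $u_n$ for smooth $b_n$, using \eqref{kr1} to control $(b-b_n)\cdot\nabla u_n$. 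For (iv), we apply the Moser iterations of the first step to differences $u_n-u_m$, which give $C_\infty$-Cauchy estimates, so the propagator exists and the limit is independent of the sequence.

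The main obstacle is the second step: showing that the Malliavin-derivative bounds are uniform in $n$ when $b$ has only form-bounded singularities. The concrete route is to prove the simplex estimate with a constant below $1$ per step, valid precisely when $\delta$ is small. I would prove it by induction on $k$, using the weighted $L^q_{\rho}$ gradient bound of the first step at each level, as in \cite{KiM_strong}, now tracking the dependence on $g_\delta$ through $L^{1+\varepsilon}$ rather than boundedness.
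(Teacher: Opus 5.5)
\textbf{Genuine gap in Step 2.} Your treatment of (ii)--(iv) matches the paper, which cites \cite{KM_JDE} for the Krylov-type bound, uses Cherny's theorem plus conditional weak uniqueness for (iii), and uses the Moser iterations of \cite{Ki_Osaka} for (iv). One correction there: these PDE arguments give only $L^q_\rho$ bounds on $\nabla u$, not sup-bounds, and sup-bounds on $\nabla u$ fail in general for form-bounded drifts.

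The problem is Step 2, which is the heart of (i). The terms of the Dyson series for $\nabla_xX^{x,n}$ and $D_sX^{x,n}$ are simplex integrals of the \emph{derivatives} $\nabla b_n(t_i,X^{x,n}_{t_i})$. Form-boundedness gives no uniform control of these, and they cannot be dominated by simplex integrals of $|b_n|$. The latter are of Khasminskii type (Markov property plus a Krylov-type bound) and say nothing about $\nabla X$. They are also not what \cite[Prop.\,1]{KiM_strong} estimates; that result concerns products of $\nabla_{\alpha_i}f_i$.

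The ``It\^o/Zvonkin-type transformation'' that is supposed to bridge the two is never specified, and the natural candidates fail here:
\begin{enumerate}
\item A Zvonkin change of variables puts $\nabla^2u$ along the path. For form-bounded $b$ this is not controlled; only $|\nabla u|^{q-2}|\nabla^2u|^2$ is.
\item Rewriting $\int\nabla b_n(X)\,dt$ via It\^o's formula produces iterated mixed It\^o integrals, whose geometric series BDG does not control.
\end{enumerate}

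\textbf{The missing ingredient} is Proposition~$\ref{prop1}'$: the bound $\|\mathbf E\int_{\Delta_n(T_0,T_1)}\prod_i\nabla_{\alpha_i}f_i(t_i,X^{\cdot}_{t_i})\,dt\|^2_{L^2_\rho}\le C_0K^n(T_1-T_0)^{\frac{\varepsilon}{1+\varepsilon}}$ for form-bounded $f_i$. It is proved by a plain $L^2_\rho$ energy argument in which the derivative is moved off $f_k$.
\begin{enumerate}
\item Set $u_{n+1}=1$ and $h_k=(\nabla_{\alpha_k}f_k)u_{k+1}$, and let $u_k$ solve $\partial_tu_k+\frac12\Delta u_k+b\cdot\nabla u_k+h_k=0$, $u_k(T_1)=0$. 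Then $\mathbf E_{\mathcal F^W_{T_0}}\int_{\Delta_n(T_0,T_1)}\prod_i\nabla_{\alpha_i}f_i(t_i,X^x_{t_i})\,dt=u_1(T_0,X^x_{T_0})$.
\item Test the $u_k$-equation with $e^Gu_k\rho$ and integrate by parts: $\langle(\nabla_{\alpha_k}f_k)u_{k+1},u_k\rangle=-\langle f_k\nabla_{\alpha_k}u_{k+1},u_k\rangle-\langle f_ku_{k+1},\nabla_{\alpha_k}u_k\rangle$, plus a harmless $\nabla\rho$ term. Now only $f_k^2$ appears, and form-boundedness converts it into $\nu\langle|\nabla u|^2\rangle+g_\nu\langle u^2\rangle$.
\item This gives $\int e^G(\langle|\nabla u_k|^2\rangle+c\,g_\nu\langle u_k^2\rangle)\le K\int e^G(\langle|\nabla u_{k+1}|^2\rangle+c\,g_\nu\langle u_{k+1}^2\rangle)$ with $K<1$ for small $\delta,\nu$. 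The factor with $G'=\lambda(g_\delta+g_\nu+C\theta)$ absorbs the $g$-terms.
\item The first level is handled by extending $u_1$ backward to $[0,T_0]$. The last level is handled by applying form-boundedness to $\sqrt\rho$; this removes the compact-support assumption and, via $g_\nu\in L^{1+\varepsilon}$, yields the factor $(T_1-T_0)^{\frac{\varepsilon}{1+\varepsilon}}$.
\end{enumerate}

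Your suggestion of using $L^q_\rho$ gradient bounds at each level cannot replace this, because the source $(\nabla f_k)u_{k+1}$ is not form-bounded.

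Inserting this estimate, with $f_i$ the components of $b_n$, into the Dyson series gives the bounds of Proposition~$\ref{prop2}'$. These hold only in $L^{2r}(B_1(y),L^r(\Omega))$, i.e.\ averaged in $x$, not pointwise in $x$ as you state. That is sufficient, since Lemma~\ref{lem_compact} needs only the averaged version.
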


\begin{remark}
Assertion (\textit{iv}) was proved in \cite{Ki_Osaka}. Its proof uses the analogue of the gradient bounds of Proposition \ref{prop_grad} on the solution to Cauchy problem for the Kolmogorov equation $(\partial_t - \frac{1}{2}\Delta + b \cdot \nabla u)=F$, $u|_{t=0}=f$.

The proofs of the Krylov-type bound (\textit{ii})  and the conditional pathwise uniqueness (\textit{iii}) remain unchanged. The Krylov-type bound was proved in \cite{KM_JDE}. The proof of the conditional pathwise uniqueness follows the idea of R\"{o}ckner and Zhao and uses:

-- Chernyi's theorem \cite{C}, which can be viewed as a dual of the Yamada-Watanabe principle; its application here exploits the fact that the diffusion coefficients in \eqref{sde_b} are constant; 

-- conditional weak uniqueness for \eqref{sde_b} established in \cite{KM_JDE}. 

The proof of the strong existence (\textit{i}) repeats the proof in \cite{KiM_strong} with one essential addition. The simplex estimate \cite[Prop.\,1]{KiM_strong} ($=$ the drift analogue of Proposition \ref{prop1} in the proof of Theorem \ref{thm1}) imposes the compact support assumption on the drift $b$ (i.e.\,function $f_i$ in the estimate below) and assumes that the function $g_\delta$ in the form-boundedness condition on $b$ is identically zero\footnote{These simplifying assumptions do not affect the class of admissible local singularities of $b$, which was the authors' main interest in \cite{KiM_strong}.}. In Section \ref{thm2_proof_sect} we state and prove \cite[Prop.\,1]{KiM_strong} in complete generality, as is needed to obtain assertion (\textit{i}).
\end{remark}

\subsection{Weak solutions}
\label{weak_sol_sect}

 If one is interested in the weak solution theory of SDE \eqref{sde_b}, then one can go beyond the setting of Theorem \ref{thm2} in several directions. Let us focus for simplicity on $b=b(x)$, although the results cited below are, in general, valid for time-inhomogeneous drifts as well; we also ignore the behaviour of the drift at infinity and focus on admissible local singularities.

1.~Theorem \ref{thm2} requires the form-bound of $b$ to be smaller than a certain dimension-dependent constant. One can, however, prove weak existence/strong Markov property (in fact, one obtains a strongly continuous Feller semigroup), plus some conditional weak uniqueness results, for $b \in \mathbf{F}_\delta$, $\delta<4$ \cite{KiS_sharp, KiS_JDE,KiV}. 
One thus arrives at completely dimension-independent conditions on the drift, which is important for applications to particle systems \cite{Ki_multi}. 
The value $\delta=4$ corresponds to the first critical threshold in the following table, that is, until the PDE methods work. 
Specifically, consider SDE
$$
dX_t=-\frac{d-2}{2}\sqrt{\delta}\frac{X_t}{|X_t|^2}dt + \sqrt{2}W_t, \quad X_0=0,
$$
with drift set to zero at the origin.
We have the following tetrachotomy:

\begin{table}[h]
\caption{Weak solvability for $b(x)=-\frac{d-2}{2}\sqrt{\delta}\frac{x}{|x|^2}$}
\label{fig1}
\centering
\begin{tabular}{|c|c|}
\hline
Values of $\delta$ & Proof of weak existence \\
\hline
$[0,4[$ & Via PDEs with \eqref{hardy_drift} viewed as a form-bounded drift \cite{KiS_sharp,KiS_JDE}\\
\hline
$[0,4\big(\frac{d-1}{d-2}\big)^2[$ & Tightness via SDE exploiting special form of \eqref{hardy_drift} \cite{FJ} \\
\hline
$[4\big(\frac{d-1}{d-2}\big)^2,4\big(\frac{d}{d-2}\big)^2[$ & Via $X_t \mapsto |X_t|^2X_t$ or via distributional drift, using special form of \eqref{hardy_drift} \cite{FJ, T} \\
\hline
 $[4\big(\frac{d}{d-2}\big)^2,\infty[$ & No weak solution \\
\hline
\end{tabular}
\end{table}

The nonexistence of a weak solution is due to the strong attraction to the origin, see the detailed discussion of the counterexample for $\delta \geq 4\big(\frac{d}{d-2}\big)^2$ in \cite{BFGM,FJ}; in fact, even started away from the origin, if $\delta$ surpasses the threshold $4\big(\frac{d}{d-2}\big)^2$, the solution arrives at the origin with positive probability and stays there; the proof is an application of the theory of Bessel processes.

2.~There is weak solution theory to SDE 
$$
dX_t=b(X_t)dt + \sqrt{2}W_t, \quad X_0=x \in \mathbb R^d, \quad d \geq 2,
$$
with the same level of detail as in Theorem \ref{thm2} for a larger class of weakly form-bounded drifts \cite{KiS_brownian, Ki_NoDEA, KY}: $|b| \in L^1_{\loc}(\mathbb R^d)$ and
$$
\langle |b|,\varphi^2\rangle \leq \delta \langle |(\lambda-\Delta)^{\frac{1}{4}}\varphi|^2\rangle \text{ for all } \varphi \in C_c^\infty(\mathbb R^d),
$$
for some sufficiently small $\delta>0$, for some $\lambda>0$,
or for the broad subclass of drifts having sufficiently small Morrey norm
$$
\|b\|_{M_{1+\varepsilon}}=\sup_{x \in \mathbb R^d, r>0} r\biggl(\frac{1}{|B_r(x)|}\int_{B_r(x)}|b|^{1+\varepsilon} \biggr)^{\frac{1}{1+\varepsilon}}
$$
for some fixed $0<\varepsilon \leq 1$ (the smaller it is, the broader is the class). For example, the many-particle drift in the finite-particle approximation of the celebrated Keller-Segel model of chemotaxis, i.e.\,$b=(b^1,\dots,b^N):\mathbb R^{2N} \rightarrow \mathbb R^{2N}$,
$$
b^i(x)=\frac{\kappa}{N}\sum_{j=1, j \neq i}^N \frac{x^i-x^j}{|x^i-x^j|^2}, \quad x=(x^1,\dots,x^N) \in \mathbb R^{2N},
$$ 
belongs to these classes, but does not belong to $\mathbf{F}_\delta$ since its square is not locally summable, or, one could also say, because there is no non-trivial Hardy inequality in $\mathbb R^2$.  In this setting, additionally having discontinuous diffusion coefficients seems to be impossible, at least within the existing approach of \cite{KiS_brownian, Ki_NoDEA, KY}.

3.~At the other end of the range (or, perhaps, convex body) of possible results, there are recent results of Krylov on detailed weak solution theory of 
$$
dX_t=b(X_t)dt+\sqrt{a(X_t)}dW_t, \quad X_0=x, \quad d \geq 2,
$$
with bounded, symmetric $a \geq \kappa I$ in the VMO class (see the introduction) and drift satisfying, for some $q_b>d_0(d,\kappa)>\frac{d}{2}$, 
$$
b \in M_{q_b} \text{ with sufficiently small Morrey norm}.
$$
Under these assumptions, Krylov proves existence and conditional weak uniqueness in Krylov class ($=$reasonable occupation time estimate). The conditions on the drift $b$ are more restrictive than in the results described in the previous paragraph. Indeed, to deal with the VMO coefficients (in presence of a singular drift) one needs to establish regularity of the second derivatives of solutions of the corresponding Kolmogorov backward equation; see  \cite{BKRS, Do, Kr3} and the references therein for the relevant regularity theory. The classes of form-bounded or weakly form-bounded drifts, on the other hand, destroy any non-trivial estimate on the second derivatives, but these are not needed when the diffusion coefficients in the SDE are constant or satisfy \eqref{nabla_sigma}; in small dimensions the situation is better, see more detailed discussion in \cite{Ki_survey}.

\bigskip

\section{Notations and auxiliary results}

\label{notations_sect}

\subsection{Notations}
\begin{itemize}
\item Let $\mathcal B(X,Y)$ denote the space of bounded linear operators between Banach spaces $X \rightarrow Y$,  endowed with the operator norm $||\cdot ||_{X\to Y}$, and set $\mathcal B(X):=\mathcal B(X,X)$.  
\item We write $T=s\mbox{-} Y \mbox{-}\lim_n T_n$, for $T, T_n \in \mathcal B(X,Y)$ if 
$$
\lim_n\|Tf- T_nf\|_Y=0 \quad \text{ for every $f \in X$}.
$$ 

\item Let $[X]^d$ and $[X]^{d \times m}$ denote the spaces of $d$-vectors and $d \times m$-matrices with entries in $X$.
  
\item Put
$$
\langle f, g \rangle = \langle f g \rangle := \int_{\mathbb{R}^d} f(x) g(x) dx.
$$
Given a weight $\rho$ on $\mathbb R^d$, we write
$$
\langle f,g\rangle_\rho=\langle fg\rangle_\rho=\int_{\mathbb R^d}f(x)g(x)\rho(x)dx.
$$

\item Given a matrix $A=(a_{ij}) \in \mathbb R^{d \times d}$, we denote by $|A|$ its Euclidean norm, i.e.\,$|A|^2:=\sum_{i,j=1}^d a_{ij}^2$.

\item Put $a \wedge b:=\min\{a,b\}$, $a \vee b:=\max\{a,b\}$.

\item Set $ \nabla_i := \partial_{x_i}$.
\item Let $L^p = L^p(\mathbb{R}^d, dx)$, $W^{1,p} = W^{1,p}(\mathbb{R}^d, dx)$ denote the usual Lebesgue and Sobolev spaces, respectively.

\item $C_\infty:=\{f \in C_b(\mathbb R^d) \mid \lim_{|x| \rightarrow \infty f(x)=0}\}$ endowed with the $\sup$-norm. 

\item For any $p>1$, we use $p^\prime$ to denote its conjugate $p/(p- 1)$; if $p=1$, then $p'=\infty$.

\item For a given matrix field $a=(a_{ij})_{i,j=1}^d:\mathbb R^d \rightarrow \mathbb R^{d \times d}$, we define its row divergence vector field $\nabla a:\mathbb R^d \rightarrow \mathbb R^d$ by the formula
$$
(\nabla a)_j:=\sum_{i=1}^d\nabla_i a_{ij}.
$$
\item Given $T_0<T_1$, define the temporal simplex
    $$
    \Delta_n(T_0,T_1) :=   \left\{ (t_1,\ldots,t_n)\in\mathbb{R}^n: T_0 \leq t_1\leq\cdots\leq t_n\leq T_1  \right\},
    \qquad
    \Delta_n(T):=\Delta_n(0,T).
    $$
\item Denote by $\mathbf{E}_{\mathcal{F}_t}$ the conditional expectation with respect to the $\sigma$-algebra $\mathcal{F}_t$.

\item Recall that a family of operators  $\{P^{r,t}\}_{0 \leq r \leq t \leq T} \subset \mathcal B(C_\infty)$ is a backward  Feller propagator if

$1^\circ$)  $P^{r,s}P^{s,t}=P^{r,t}$ for every $s \in [r,t]$ (reproduction property).

$2^\circ$) $P^{r,r}={\rm Id}$ (identity operator),  

$3^\circ$) $\|P^{r,t}f\|_\infty \leq \|f\|_\infty$, $P^{r,t}[C_\infty^+] \subset C_\infty^+,$

$4^\circ$) for every $f \in C_\infty$, the function $(r,t) \mapsto P^{r,t}f$, $0 \leq r \leq t \leq T$, is continuous in $C_\infty$. 

The last property also follows from the separate continuity: for each fixed $r$ the function $[r,T] \ni t \mapsto P^{r,t}f$ is continuous, and for each fixed $t$ the function $[0,t] \ni r \mapsto P^{r,t}f$ is continuous \cite[Theorem 2.1]{GvC}.

\end{itemize}

\medskip

\subsection{Compactness for $L^2$ random vector fields}

\begin{lemma}[{\cite[Lemma 3.1]{RZ}}]
\label{lem_compact}
Let $U \subset \mathbb{R}^d$ be a bounded smooth domain. Let $\{F_n=F_n(x,\omega)\}$ be a sequence of random fields in $L^2(U \times \Omega, dx \times \mathbf{P})$. 
Assume that
\begin{align}
\label{a.1} 
& \quad \qquad \mathbf{E}\int_U \big(|F_n(x)|^2+|\nabla_x F_n(x)|^2 \big)dx   \leq K \\
\label{a.2} 
&\qquad \qquad \mathbf{E} \int_{U} \int_0^T |D_s F_n(x)|^2 \, ds \, dx  \leq K, \\
\label{a.3} 
&\mathbf{E} \int_{U} \int_0^T \int_0^T \frac{|D_s F_n(x) - D_{s'} F_n(x)|^2}{|s - s'|^{1 + 2\beta}} \, ds \, ds' \, dx  \leq K
\end{align}
for some $\beta>0$ and $K<\infty$ independent of $n$.
Then the sequence $\{F_n\}$ is relatively compact in $L^2(U \times \Omega)$.
\end{lemma}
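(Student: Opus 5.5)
The plan is to reduce the statement to the compactness criterion of Da Prato--Malliavin--Nualart in $L^2(\Omega)$, combined with Rellich--Kondrachov in the spatial variable, via a tensor-product (Hilbert-space-valued) argument. The starting point is the Hilbert space $H:=L^2(U)$ and the $H$-valued random variables $\omega\mapsto F_n(\cdot,\omega)$, so that $L^2(U\times\Omega)=L^2(\Omega;H)$. Conditions \eqref{a.2}--\eqref{a.3} say that $\{F_n\}$ is bounded in an $H$-valued fractional Malliavin--Sobolev space. Condition \eqref{a.1} says that $F_n$ lies, in mean square, in $W^{1,2}(U)$, which embeds compactly into $H$ since $U$ is bounded and smooth.

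First, truncate in the spatial direction. Let $\{e_k\}$ be an orthonormal basis of $L^2(U)$ consisting of eigenfunctions of the Neumann Laplacian, with eigenvalues $\lambda_k\uparrow\infty$, and let $\Pi_N$ denote the projection onto $\mathrm{span}\{e_1,\dots,e_N\}$. Then
$$\|F-\Pi_NF\|_H^2\le \lambda_{N+1}^{-1}\|\nabla F\|_{L^2(U)}^2,$$
so by \eqref{a.1}
$$\sup_n\mathbf E\|F_n-\Pi_NF_n\|_H^2\le K\lambda_{N+1}^{-1}\to0 .$$
This reduces the problem to showing relative compactness of $\{\Pi_NF_n\}_n$ for each fixed $N$, that is, of the finitely many scalar sequences $f_n^k:=\langle F_n,e_k\rangle$ in $L^2(\Omega)$. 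A diagonal argument plus the uniform tail bound then gives total boundedness in $L^2(U\times\Omega)$.

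Second, verify the scalar criterion for each $f^k_n$. Since $D_s f_n^k=\langle D_sF_n,e_k\rangle$, Cauchy--Schwarz in $x$ together with $\|e_k\|_{L^2(U)}=1$ shows that \eqref{a.1}--\eqref{a.3} imply, uniformly in $n$,
$$\mathbf E|f^k_n|^2+\mathbf E\int_0^T|D_sf^k_n|^2ds+\mathbf E\int_0^T\!\!\int_0^T\frac{|D_sf_n^k-D_{s'}f_n^k|^2}{|s-s'|^{1+2\beta}}\,ds\,ds'\le K.$$
This is exactly the hypothesis of the Da Prato--Malliavin--Nualart criterion, and in particular it implies relative compactness in $L^2(\Omega)$.

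The main obstacle is the criterion itself, if it is to be proved rather than cited. To prove it, I would use the chaos expansion $f=\sum_m I_m(g_m)$ and aim to show two things. Higher chaoses are uniformly small: $\sum_{m>M}m!\|g_m\|^2\le M^{-1}\mathbf E\|Df\|^2_{L^2}$. Within each fixed chaos, the kernels $g_m(s_1,\dots,s_m)$ lie in a bounded set of $L^2([0,T]^m)$ with uniform fractional Sobolev regularity in each variable, coming from the $\beta$-condition applied to $D_sf$, whose $(m-1)$-chaos kernel is $m\,g_m(s,\cdot)$. Compactness of the fractional Sobolev embedding on $[0,T]^m$ then gives precompactness of $g_m$ in $L^2$, hence of $I_m(g_m)$ by the isometry. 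Combining the two steps completes the proof.
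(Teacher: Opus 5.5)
The paper gives no proof of this lemma. It is quoted from \cite{RZ}, where it rests on the Wiener--Sobolev compactness criteria in the literature: Da Prato--Malliavin--Nualart for scalar Wiener functionals, extended to random fields by Bally--Saussereau \cite{BS}.

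Your argument is correct and gives a self-contained proof, organized differently. The Neumann spectral truncation uses \eqref{a.1} only through $\sup_n\mathbf E\|F_n-\Pi_NF_n\|_{L^2(U)}^2\le K\lambda_{N+1}^{-1}\to 0$. As a result, spatial and Malliavin compactness decouple completely. Conditions \eqref{a.2}--\eqref{a.3} are then needed only for the finitely many scalar functionals $\langle F_n,e_k\rangle$. For these you reprove the scalar criterion by the chaos expansion, which is essentially the original Da Prato--Malliavin--Nualart argument.

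Your route makes the role of each hypothesis transparent, and it adapts to any Hilbert-space-valued setting with a compact embedding in place of Rellich. The citation buys brevity and access to the more general published criteria.

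When you write it out, add a line for each of the following points.
\begin{itemize}
\item Chaos by chaos, \eqref{a.3} and the symmetry of $g_m$ control only the directional Gagliardo seminorms of $g_m$ (one variable at a time, with the $L^2$ norm in the others). These do dominate the full $W^{\beta,2}((0,T)^m)$ seminorm, which is what the compact embedding needs. To see this, telescope from $x$ to $y$ one coordinate at a time and integrate out the remaining $m-1$ coordinate differences; this turns $|x-y|^{-m-2\beta}$ into $C|x_i-y_i|^{-1-2\beta}$.
\item First reduce to $\beta<1$. This is harmless, since on $[0,T]$ the double integral with any $\beta'<\beta$ is at most $T^{2(\beta-\beta')}$ times the one with $\beta$.
\item The identity $D_s\langle F_n,e_k\rangle=\langle D_sF_n,e_k\rangle$ follows from the closedness of $D$, or directly from the chaos expansions of $F_n(x)$.
\item The chaos expansion presupposes that $F_n$ is $\mathcal F^W_T$-measurable. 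This is implicit in the lemma's setting.
\end{itemize}
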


We refer to \cite{RZ} for the discussion of the history of this type of results. Regarding the definition of the Malliavin derivative and its properties, see \cite{H,N}.

\bigskip

\section{PDE estimates} In this section, the entries of $\sigma$ and of the other functions have bounded derivatives in the spatial variables, uniformly in $t$, in order to justify the manipulations with the equations; however, the constants in the estimates will not depend on the smoothness of $\sigma(t,\cdot)$. In the next section we will apply our PDE estimates to matrices $\sigma_n$ from the statement of Theorem \ref{thm1}, and so the constants in the resulting estimates will be independent of $n$.

\subsection{Contractivity and gradient bounds for the Kolmogorov equation}
\label{contr_grad_sect}

The following simple observation makes the proofs of the results of this section -- Propositions \ref{contr_prop} and \ref{prop_grad} -- more transparent.
Since the matrix of diffusion coefficients $\sigma$ is bounded, the conditions of Theorem \ref{thm1} imply that
\begin{equation}
\label{nabla_a}
(\nabla_r a_{il})_{i=1}^{d} \in  \mathbf{F}_{\gamma_{rl}} \quad \text{ for all }r,l, \qquad \nabla a \in  \mathbf{F}_{\delta_{a}} 
\end{equation}
with $\gamma_{rl} \leq C\nu$, $\delta_a \leq C \nu$. It therefore suffices to show that, if $\gamma_{rl}$, $\delta_a$ are sufficiently small, then the assertions of Propositions \ref{contr_prop}, \ref{prop_grad} hold.

\begin{proposition}
\label{contr_prop}
Under the conditions of Theorem \ref{thm1}, for every $p>1$, provided that the form-bound $\nu$ of the derivatives of $\sigma$ is sufficiently small, and $\theta$ is fixed sufficiently small, the solution to the terminal-value problem
\begin{equation}
\label{Cauchy2}
\left\{
\begin{array}{l}
(\partial_t + \frac{1}{2} a\cdot \nabla^2)u=0 \quad \text{ in } ]0,T[ \times \mathbb R^d, \\
u|_{t=T}=f \in C_c^\infty(\mathbb R^d), 
\end{array}
\right.
\end{equation}
where $a=\sigma\sigma^{\top}$, satisfies
$$
\|u(t)\|_{L^p_\rho} \leq e^{\omega (T-t)}\|f\|_{L^p_\rho}, \quad t \in [0,T],
$$
where $\omega=\omega(c_\delta,\theta)$.  The same estimate holds in the non-weighted case $\rho = 1$, i.e.\,$\theta=0$.
\end{proposition}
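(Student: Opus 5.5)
We argue by the standard $L^p$ energy method, written in reverse time. Set $v(s):=u(T-s)$, so that $\partial_s v=\frac12 a\cdot\nabla^2 v$ and $v(0)=f$. Since $\sigma$ is smooth in $x$ here and $f\in C_c^\infty$, the function $v$ is smooth and bounded together with its derivatives. Its decay is only Gaussian-type, but $\rho$ is integrable, so all integrations by parts are justified. (For $1<p<2$ one works with $(v^2+\varepsilon)^{p/2}$ and lets $\varepsilon\downarrow0$.) The plan is to compute $\frac{d}{ds}\langle |v|^p\rangle_\rho$ and absorb every bad term into the dissipation.

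First, rewrite the operator in divergence form:
$$
a\cdot\nabla^2 v=\nabla\cdot(a\nabla v)-(\nabla a)\cdot\nabla v ,
$$
with $\nabla a$ the row divergence from Section \ref{notations_sect}. Multiply by $p|v|^{p-2}v\,\rho$ and integrate. Put $w:=|v|^{p/2}$, so that $|v|^{p-2}|\nabla v|^2=\frac{4}{p^2}|\nabla w|^2$. The divergence term, after integration by parts, gives the dissipation
$$
-\frac{p(p-1)}{2}\cdot\frac{4}{p^2}\langle a\nabla w\cdot\nabla w\rangle_\rho\le-\frac{2(p-1)}{p}\,\kappa\,\langle|\nabla w|^2\rangle_\rho .
$$
It also produces a weight term $-\frac{p}{2}\langle |v|^{p-2}v\,a\nabla v\cdot\nabla\rho\rangle$.

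Second, control the drift term $\frac{p}{2}\langle |v|^{p-2}v\,(\nabla a)\cdot\nabla v\rangle_\rho$. Its absolute value is bounded by
$$
\langle |\nabla a|\,w\,|\nabla w|\rangle_\rho\le\varepsilon\langle|\nabla w|^2\rangle_\rho+\frac{1}{4\varepsilon}\langle|\nabla a|^2w^2\rangle_\rho .
$$
To apply the form-bound \eqref{nabla_a} of $\nabla a$ in the weighted space, use the test function $\varphi=w\sqrt\rho$. Together with \eqref{two_est}, this gives
$$
\langle|\nabla a|^2 w^2\rangle_\rho\le \delta_a\|\nabla(w\sqrt\rho)\|_2^2+c_{\delta_a}\langle w^2\rangle_\rho\le \delta_a(1+\eta)\langle|\nabla w|^2\rangle_\rho+\bigl(C_\eta\theta\delta_a+c_{\delta_a}\bigr)\langle w^2\rangle_\rho .
$$
The weight term is bounded using $|\nabla\rho|\le C\sqrt\theta\rho$ and $\|a\|_\infty<\infty$:
$$
\Bigl|\frac p2\langle |v|^{p-2}v\,a\nabla v\cdot\nabla\rho\rangle\Bigr|\le C\sqrt\theta\langle w|\nabla w|\rangle_\rho\le \varepsilon\langle|\nabla w|^2\rangle_\rho+C\theta\varepsilon^{-1}\langle w^2\rangle_\rho .
$$

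Third, collect the terms. The coefficient of $\langle|\nabla w|^2\rangle_\rho$ is at most
$$
-\frac{2(p-1)}{p}\kappa+2\varepsilon+\frac{\delta_a(1+\eta)}{4\varepsilon}.
$$
Choosing $\varepsilon$ as a small fixed fraction of $\frac{(p-1)\kappa}{p}$, this is nonpositive as soon as $\delta_a\lesssim\bigl(\frac{(p-1)\kappa}{p}\bigr)^2$. Since $\delta_a\le C\nu$, this is exactly where smallness of $\nu$ enters, depending on $p$, $\kappa$ and $\|\sigma\|_\infty$. What remains is
$$
\frac{d}{ds}\langle|v|^p\rangle_\rho\le p\,\omega\,\langle|v|^p\rangle_\rho ,
$$
and Gronwall's inequality yields $\|u(t)\|_{L^p_\rho}\le e^{\omega(T-t)}\|f\|_{L^p_\rho}$. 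For $\rho=1$ the same computation applies with no weight terms.

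The main obstacle is the drift term $(\nabla a)\cdot\nabla v$. The symmetric part of $a\cdot\nabla^2$ is harmless, but this term must be absorbed entirely by the dissipation, which is where the form-boundedness of $|\nabla\sigma|$ is used. Two points need care. The form-bound is in unweighted $L^2$, so it has to be transferred to $L^2_\rho$ through $\varphi=w\sqrt\rho$, with $\theta$ small to keep the commutator terms small. And the admissible size of $\nu$ depends on $p$ and degenerates as $p\downarrow1$ or $p\to\infty$; this is consistent with the statement, which fixes $p$ first.
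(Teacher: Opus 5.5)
Your proof is correct and follows essentially the same route as the paper. Both reverse time, rewrite the operator in divergence form, test with $\rho|u|^{p-2}u$, bound the $(\nabla a)$-term and the $\nabla\rho$-term by Cauchy--Schwarz and \eqref{two_est}, apply the form-bound of $\nabla a$ to $|u|^{p/2}\sqrt{\rho}$, absorb everything into the dissipation $\frac{2(p-1)}{p}\kappa\langle|\nabla|u|^{p/2}|^2\rangle_\rho$ for small $\nu$, and conclude by Gronwall. Your regularization for $1<p<2$ is a detail the paper omits, and, as your own computation shows, $\theta$ enters only through $\omega$ rather than through the absorption step.
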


\begin{proof} We will only need $\nabla a \in \mathbf{F}_{\delta_a}$, see the remark in the beginning of Section \ref{contr_grad_sect}. The proof given below is standard, but we nevertheless include the details for the reader's convenience.

It will be convenient to re-denote $u(T-t,\cdot)$, $a(T-t,\cdot)$ by $u(t,\cdot)$, $a(t,\cdot)$, respectively, and 
carry out the proof for the initial-value problem 
\begin{equation}\label{Cprob3}
\left\{
\begin{array}{l}
\partial_t u- \frac{1}{2} a \cdot \nabla^2 u = 0, \\
 u|_{t=0}=f.
 \end{array}
 \right.
\end{equation}
Let us rewrite the equation in the divergence form
$$
\partial_t u- \frac{1}{2} \nabla \cdot a \cdot \nabla u + \frac{1}{2} (\nabla a) \cdot \nabla u = 0.
$$
We now multiply this equation  by $\rho |u|^{p-2}u$ and, after integrating by parts, obtain, 
\begin{align*}
\frac{1}{p}\frac{d}{dt}\langle |u|^p \rho \rangle & + \frac{2(p-1)}{p^2} \langle a \cdot \nabla |u|^{p/2},\nabla  |u|^{p/2} \rho \rangle \\
& = - \frac{1}{p}\langle \nabla a \cdot \nabla |u|^{\frac{p}{2}}, |u|^{\frac{p}{2}}\rho \rangle  -\frac{1}{p}
\langle a \cdot \nabla |u|^{\frac{p}{2}}, |u|^{\frac{p}{2}}  \nabla \rho \rangle.
\end{align*}
Multiplying by $p$ and using the ellipticity of $a$, we obtain
\begin{align}
\label{ineq_contr}
\frac{d}{dt}\langle |u|^p \rho\rangle & + \frac{2(p-1)}{p}\kappa \langle |\nabla |u|^{p/2}|^2 \rho \rangle \\
& \leq |\langle \nabla a \cdot \nabla |u|^{\frac{p}{2}}, |u|^{\frac{p}{2}}\rho \rangle|  + 
|\langle a \cdot \nabla |u|^{\frac{p}{2}}, |u|^{\frac{p}{2}}  \nabla \rho \rangle|=:I_1+I_2. \notag
\end{align}
Let us handle $I_2$ first. By the Cauchy-Schwarz inequality,
\begin{align*}
I_2 & \leq \alpha \langle a \cdot \nabla |u|^{\frac{p}{2}}, (\nabla |u|^{\frac{p}{2}})\rho\rangle + \frac{1}{4\alpha} \langle a \cdot \nabla \rho, \frac{\nabla \rho}{\rho} |u|^p  \rangle \\
& \leq \alpha \|a\|_\infty \langle |\nabla |u|^{\frac{p}{2}}|^2 \rho\rangle + \frac{\|a\|_\infty }{4\alpha} \langle |\nabla \rho|^2, |u|^p  \rangle \\
& (\text{apply \eqref{two_est}}) \\
& \leq \alpha \|a\|_\infty  \langle |\nabla |u|^{\frac{p}{2}}|^2\rangle + \frac{\|a\|_\infty }{4\alpha}C\theta \langle |u|^p \rho \rangle,
\end{align*}
where $\|a\|_\infty  \leq \|\sigma\|_\infty^2$.
Next,
\begin{align*}
I_1 & \leq \beta \langle |\nabla a|^2,|u|^p \rho\rangle + \frac{1}{4\beta} \langle |\nabla |u|^{\frac{p}{2}}|^2 \rho\rangle \\
& (\text{use $\nabla a \in \mathbf{F}_{\delta_a}$}) \\
& \leq \beta\left(\delta_a \langle |\nabla (|u|^{\frac{p}{2}}\sqrt{\rho})|^2 \rangle + c_{\delta_a} \langle |u|^p \rho\rangle \right) + \frac{1}{4\beta} \langle |\nabla |u|^{\frac{p}{2}}|^2 \rho\rangle.
\end{align*}
The term $\langle |\nabla (|u|^{\frac{p}{2}}\sqrt{\rho})|^2 \rangle$ is bounded from above by the sum of $\langle |\nabla |u|^{\frac{p}{2}}|^2 \rho\rangle$ and, via \eqref{two_est}, $\langle |u|^p\rho\rangle$. Applying these estimates in \eqref{ineq_contr} and optimizing in $\alpha$ and $\beta$, we arrive at the inequality
$$
\frac{d}{dt}\langle |u(t)|^p \rho \rangle + \biggl[\frac{2(p-1)}{p}\kappa - \text{contribution from $I_1,I_2$} \biggr]\langle |\nabla |u|^{p/2}|^2 \rho \rangle  \leq \omega \langle |u|^p \rho\rangle.
$$
Provided that $\delta_a$ is sufficiently small, we can select $\theta$ sufficiently small so that the expression in the square brackets, i.e.\,the multiple of the energy term, is positive. Discarding the positive energy term, we arrive at the sought bound.
\end{proof}

\begin{proposition}[A priori gradient bounds]
\label{prop_grad}
Under the conditions of Theorem \ref{thm1}, for every $q>d$, provided that the form-bound $\nu$ of the derivatives of $\sigma$ is sufficiently small, and $\theta$ is fixed sufficiently small, the solution to 
\begin{equation}
\label{Cauchy3}
\left\{
\begin{array}{l}
(\partial_t +\frac{1}{2} a\cdot \nabla^2)v=-|F|\quad \text{ in } ]0,T[ \times \mathbb R^d, \\[2mm]
v|_{t=T}=f \in C_c^\infty(\mathbb R^d), 
\end{array}
\right.
\end{equation}
where $F \in \mathbf{F}_\mu$ for some $\mu<\infty$ (here, additionally, bounded and smooth), satisfies
\begin{align*}
\|v\|_{L^\infty([0,T],L_\rho^q)}^q  + \|\nabla v\|^q_{L^\infty([0,T],L_\rho^q)} & + \|\nabla|\nabla v|^{\frac{q}{2}}\|_{L^2([0,T],L_\rho^2)}^2 \\
& \leq C \bigl(\|F\|^2_{L^2([0,T],L_\rho^2)} + \|\nabla f\|^q_{L_\rho^q} + \|f\|^q_{L^q_\rho}\big).
\end{align*}
with constant $C=C(T,\theta,d,q,\kappa,\|\sigma\|_\infty,\nu,c_\nu,\mu,c_\mu)$ independent of the smoothness of $\sigma$, $f$ or $F$. 
In particular, by the Sobolev embedding theorem, for every $z \in \mathbb Z^d$
\begin{equation}
\label{v_est_unif}
\|v\|^q_{L^\infty([0,T],B_{10}(z))} \leq C_z  \bigl(\|F\|^2_{L^2([0,T],L_\rho^2)} + \|\nabla f\|^q_{L_\rho^q} + \|f\|^q_{L^q_\rho}\big).
\end{equation}
The same estimates hold in the non-weighted case $\rho = 1$, in which case, in the last estimate, one has simply $\|v\|_{L^\infty([0,T],\mathbb R^d)}$.
\end{proposition}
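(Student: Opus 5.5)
We will run the Kinzebulatov--Semënov energy method for the gradient, as in \cite{KS, Ki_Osaka}. First reverse time, as in the proof of Proposition \ref{contr_prop}, so that \eqref{Cauchy3} becomes $\partial_t v-\frac12 a\cdot\nabla^2 v=|F|$, $v(0)=f$. The bound on $\|v\|_{L^\infty L^q_\rho}$ follows as in Proposition \ref{contr_prop}. Test with $|v|^{q-2}v\rho$, and estimate the source term by
\[
\langle |F|,|v|^{q-1}\rho\rangle\le \varepsilon\langle |F|^2|v|^{q}\rho\rangle/\!\ldots\,+ C\langle |F|^2\rho\rangle\langle\ldots\rangle .
\]
It is simpler, however, to obtain this bound from the gradient estimate itself, or from a Duhamel argument combined with the contractivity of Proposition \ref{contr_prop}. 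Either way, the lower-order part is controlled by $\|f\|^q_{L^q_\rho}$ plus $\|F\|^2$ terms. The heart of the proof is the gradient estimate.

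Set $w=\nabla v$ and differentiate the equation in $x_r$:
\[
\partial_t w_r-\tfrac12 a\cdot\nabla^2 w_r-\tfrac12(\nabla_r a)\cdot\nabla^2 v=\nabla_r|F|.
\]
Multiply by $w_r|w|^{q-2}\rho$, sum over $r$ and integrate. Rewriting $a\cdot\nabla^2$ in divergence form, as before, produces the energy terms
\[
\kappa\langle|\nabla w|^2|w|^{q-2}\rho\rangle \quad\text{and}\quad \kappa\,\tfrac{4(q-2)}{q^2}\langle|\nabla|w|^{q/2}|^2\rho\rangle ,
\]
together with error terms involving $\nabla a$, $\nabla\rho$ and the critical term $\langle(\nabla_r a)\cdot\nabla^2 v,\,w_r|w|^{q-2}\rho\rangle$. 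The $\nabla\rho$ terms are absorbed using \eqref{two_est} with $\theta$ small.

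Every $\nabla a$ term has the form $|\nabla a|\,|\nabla w|\,|w|^{q-1}$. By Cauchy--Schwarz it is bounded by
\[
\varepsilon\langle|\nabla w|^2|w|^{q-2}\rho\rangle+\varepsilon^{-1}\langle|\nabla a|^2|w|^q\rho\rangle .
\]
Form-boundedness \eqref{nabla_a} applied to $\varphi=|w|^{q/2}\sqrt\rho$ then gives
\[
\langle|\nabla a|^2|w|^q\rho\rangle\le C\nu\langle|\nabla|w|^{q/2}|^2\rho\rangle+C(c_\nu,\theta)\langle|w|^q\rho\rangle .
\]
The source term is handled by integrating by parts to move $\nabla_r$ off $|F|$:
\[
\langle\nabla_r|F|,w_r|w|^{q-2}\rho\rangle=-\langle|F|,\nabla_r(w_r|w|^{q-2}\rho)\rangle .
\]
The right-hand side is bounded by $|F|\,|\nabla w|\,|w|^{q-2}$, or by $|F|\,|\Delta v|\,|w|^{q-2}$. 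Cauchy--Schwarz gives $\varepsilon\langle|\nabla w|^2|w|^{q-2}\rho\rangle+C\langle|F|^2|w|^{q-2}\rho\rangle$. By Young's inequality, and using $F\in\mathbf F_\mu$ with $\varphi=|w|^{(q-2)/2}\sqrt\rho$ or $|w|^{q/2}\sqrt\rho$, the last term is bounded by an absorbable multiple of the energy, plus $\langle|w|^q\rho\rangle$, plus $\|F\|^2_{L^2_\rho}$. This is the step that produces the $\|F\|^2$ (rather than $\|F\|^q$) on the right-hand side. Concretely, the splitting is $|F|^2|w|^{q-2}\le \epsilon|F|^2|w|^q+C|F|^2$, and the first piece is handled by form-boundedness.

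The main obstacle is the constant bookkeeping needed to absorb everything into the energy. Because we only control $\nabla a$, and no second derivatives of $v$ beyond the energy $\langle|\nabla w|^2|w|^{q-2}\rho\rangle$, the total coefficient in front of the energy must stay positive. This forces $\nu$ to be small depending on $q,d,\kappa,\|\sigma\|_\infty$, which is the origin of the smallness assumption. The hypothesis $q>d$ is not needed for the energy inequality itself; it enters only at the end. Once the energy is absorbed, we obtain
\[
\frac{d}{dt}\langle|w|^q\rho\rangle+c\langle|\nabla|w|^{q/2}|^2\rho\rangle\le\omega\langle|w|^q\rho\rangle+C\|F(t)\|^2_{L^2_\rho}.
\]
Gronwall's inequality on $[0,T]$ then yields the claimed bound for $\|\nabla v\|_{L^\infty L^q_\rho}^q$ and $\|\nabla|\nabla v|^{q/2}\|^2_{L^2L^2_\rho}$.

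Finally, \eqref{v_est_unif} follows because $\rho$ is bounded below on $B_{10}(z)$. This gives $v(t)\in W^{1,q}(B_{10}(z))$ uniformly in $t$, and Morrey's embedding, which is where $q>d$ is used, gives the sup bound. The unweighted case $\theta=0$ is identical, with all $\nabla\rho$ terms absent.
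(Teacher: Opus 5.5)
Your argument is correct, and it is essentially the ``rougher'' alternative that the paper itself mentions in Remark \ref{test_func_rem}.

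\textbf{Comparison with the paper.} For $\rho\equiv 1$, differentiating in $x_r$ and testing with $w_r|w|^{q-2}$ is the same as testing the original equation with the Kovalenko--Sem\"{e}nov function $\phi_r=-\partial_r(w_r|w|^{q-2})$. So the test function is the same, and the difference is in the bookkeeping.

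The paper keeps the equation in divergence form and never differentiates the lower-order term $\frac12(\nabla a)\cdot\nabla u$. It pairs that term with $\phi=-\nabla\cdot(w|w|^{q-2})$ and controls the resulting $\langle |w|^{q-2}|\Delta u|^2\rangle$ by the double integration by parts \eqref{Delta_est}. The commutator $[\nabla_r,A]$ is treated separately in Claim \ref{Claim1}.

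You instead collect every $\nabla a$-term into $|\nabla a|\,|\nabla w|\,|w|^{q-1}$ and absorb it into the full energy $\langle|\nabla w|^2|w|^{q-2}\rho\rangle$ after one Cauchy--Schwarz. This is legitimate, since $\langle|\nabla|w|^{q/2}|^2\rho\rangle\le\frac{q^2}{4}\langle|\nabla w|^2|w|^{q-2}\rho\rangle$.

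What each route buys:
\begin{itemize}
\item The paper's route gives explicit constants $M_i,N_i$, and hence finer control of how large $\delta_a,\gamma_{rl}$ may be.
\item Your route is shorter and closes by Gronwall on all of $[0,T]$ at once, instead of a short-interval bound extended by the propagator property.
\item In the weighted case your time-derivative term is exactly $\frac1q\frac{d}{dt}\langle|w|^q\rho\rangle$, with no extra $\partial_t u\,\nabla\rho$ term.
\end{itemize}

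Your source-term treatment is the paper's Step~3: the splitting $|F|^2|w|^{q-2}\le\epsilon|F|^2|w|^q+C_\epsilon|F|^2$, then $F\in\mathbf F_\mu$ applied with $\varphi=|w|^{q/2}\sqrt\rho$. The other choice you mention, $\varphi=|w|^{(q-2)/2}\sqrt\rho$, would not work: it produces $|w|^{q-4}|\nabla|w||^2$, which the energy does not control.

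\textbf{The bound on $\|v\|_{L^\infty L^q_\rho}$ needs tightening.} Neither of your ``simpler'' alternatives works.
\begin{itemize}
\item An $L^q_\rho$-bound on $\nabla v$ cannot control $v$ itself, because constants have zero gradient and lie in $L^q_\rho$.
\item Duhamel plus Proposition \ref{contr_prop} would require $\|F\|_{L^1L^q_\rho}$. That is not available for form-bounded $F$: a Hardy-type $|x|^{-1}$ singularity is not in $L^q_{\loc}$ for $q>d$, and the constant must not depend on the additional boundedness of $F$.
\end{itemize}

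What does work is your first option, written out with the same trick as for $w$:
\begin{itemize}
\item Test with $|v|^{q-2}v\rho$.
\item Bound $\langle|F|,|v|^{q-1}\rho\rangle\le\frac12\langle|F|^2|v|^{q-2}\rho\rangle+\frac12\langle|v|^q\rho\rangle$.
\item Split $|F|^2|v|^{q-2}\le\epsilon|F|^2|v|^q+C_\epsilon|F|^2$.
\item Absorb $\epsilon\langle|F|^2|v|^q\rho\rangle$, via $F\in\mathbf F_\mu$, into the energy term $\langle|\nabla|v|^{q/2}|^2\rho\rangle$ from the proof of Proposition \ref{contr_prop}.
\end{itemize}
This gives the required $\|f\|^q_{L^q_\rho}+C\|F\|^2_{L^2L^2_\rho}$, and \eqref{v_est_unif} then follows as you describe.
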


We prove Proposition \ref{prop_grad} in Section \ref{prop_grad_proof}; see Section \ref{proof_struct_sect} for the discussion of the role of this proposition in the proof of Theorem \ref{thm1}.

\subsection{Simplex estimate} Since $\sigma$ is assumed to be smooth in this section, by classical theory there exists a unique strong solution $X_t^x$ to SDE
\begin{equation}
\label{sde0}
X_t^x=x+ \int_0^t \sigma(r,X_r^x) \, dW_r.
\end{equation}
Let $0 \leq T_0 \leq T_1 \leq T$. 

At the risk of annoying the reader, we will carry out the proof of the next proposition in a greater generality that is needed for the proof of Theorem \ref{thm1}. Namely, we will assume that the functions $g_\nu$ appearing in the form-boundedness condition on $\nabla \sigma(t,\cdot)$  can be locally unbounded. 
 For the proof of Theorem \ref{thm1} we only need $g_\nu$ to be bounded, and thus can replace them with constant $c_\nu$. However, keeping in mind possible extensions of Theorem \ref{thm1} and the importance of the next proposition, we opt for greater generality. (We will also need this generality when we discuss Theorem \ref{thm2} dealing with singular drift.)

\begin{proposition}
\label{prop1}
Let $f_i \in \mathbf{F}_\nu$ ($i \geq 1$) with the same
$$
g_\nu \in L^{1+\varepsilon}(\mathbb R)
$$ 
for some fixed $\varepsilon>0$.
Assuming that the constant $\theta$ in the definition of weight $\rho$ is fixed sufficiently small, there exist positive constants $C_0$, $K$ (independent of the smoothness of $\sigma$ and $f_i$) such that, for every $n \geq 2$, for all $T_0<T_1$ satisfying $0 \leq T_0 \leq T_1 \leq T$,
\begin{equation}
\label{estimate_g0}
\left\|\mathbf{E} \int_{\Delta_{n}(T_0,T_1)} \prod_{i=1}^n |f_i(t_i, X_{t_i}^x) |^{2}dt_1 \dots dt_n \right\|_{L^{2}_\rho(\mathbb R^d)}^{2} \leq C_0K^{n-2}(T_1-T_0)^{\frac{\varepsilon}{1+\varepsilon}}.
\end{equation}
Moreover, $K$ can be made as small as needed by assuming that the form-bound $\nu$ of $f_i$ is sufficiently small.

If $g_\nu \in L^{\infty}(\mathbb R)$, then 
\begin{equation}
\label{estimate_g}
\left\|\mathbf{E} \int_{\Delta_{n}(T_0,T_1)} \prod_{i=1}^n |f_i(t_i, X_{t_i}^x) |^{2}dt_1 \dots dt_n \right\|_{L^{2}_\rho(\mathbb R^d)}^{2} \leq C_0K^{n-2}(T_1-T_0).
\end{equation}
\end{proposition}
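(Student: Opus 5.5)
The plan is to follow the approach of \cite{KiM_strong}: use the Markov property to turn the simplex integral into an iterated composition of the backward propagator with multiplication operators, and then estimate that composition by a weighted $L^2_\rho$ energy argument for the backward Kolmogorov equation. Set $P^{r,t}f(x)=\mathbf{E}f(X^{r,x}_t)$, where $X^{r,x}$ is the solution started at $x$ at time $r$. By the Markov property,
$$
\mathbf{E}\int_{\Delta_n(T_0,T_1)}\prod_{i=1}^n|f_i(t_i,X^x_{t_i})|^2\,dt = P^{0,T_0}u_1(T_0,\cdot)(x).
$$
The functions $u_k$ are defined backward by $u_{n+1}\equiv 1$ and
$$
u_k(s,y)=\int_s^{T_1}P^{s,t}\big(|f_k(t)|^2u_{k+1}(t)\big)(y)\,dt .
$$
Equivalently, each $u_k$ solves the terminal-value problem
$$
(\partial_t+\tfrac12 a\cdot\nabla^2)u_k=-|f_k|^2u_{k+1} \text{ on } ]T_0,T_1[, \qquad u_k(T_1)=0 .
$$
Proposition \ref{contr_prop} with $p=2$ gives $\|P^{0,T_0}u_1(T_0)\|_{L^2_\rho}\le e^{\omega T}\|u_1(T_0)\|_{L^2_\rho}$. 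It therefore suffices to bound $\sup_{s}\|u_1(s)\|^2_{L^2_\rho}$ recursively in $k$.

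The key step is an energy inequality for each $u_k$. I will test the equation with $u_k\rho$ and integrate over $[s,T_1]$, handling the drift part $\nabla a\cdot\nabla u_k$ and the $\nabla\rho$ terms exactly as in the proof of Proposition \ref{contr_prop}. This gives
$$
\sup_s\|u_k(s)\|^2_{L^2_\rho}+\kappa\int_{T_0}^{T_1}\|\nabla u_k\|^2_{L^2_\rho}\,dt\ \lesssim\ \int_{T_0}^{T_1}\langle |f_k|^2u_{k+1},u_k\rangle_\rho\,dt .
$$
I will split the right-hand side with Cauchy–Schwarz as $\langle |f_k|^2 u_k^2\rangle_\rho^{1/2}\langle |f_k|^2u_{k+1}^2\rangle_\rho^{1/2}$. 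Form-boundedness applied to $\varphi=u\sqrt\rho$ then gives
$$
\langle|f_k|^2u^2\rangle_\rho\le\nu\|\nabla u\|^2_{L^2_\rho}+(C\theta\nu+g_\nu(t))\|u\|^2_{L^2_\rho}.
$$
The $\nu\|\nabla u\|^2$ pieces are the contributions that can be absorbed. The recursion should therefore propagate the quantity
$$
E_k:=\sup_s\|u_k\|^2_{L^2_\rho}+\int\|\nabla u_k\|^2_{L^2_\rho}\,dt
$$
rather than $L^\infty$-type bounds, leading to $E_k\le K E_{k+1}$ with $K\sim\nu/\kappa$ plus $g$-contributions. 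This recursion holds for $k\le n-1$, where $u_{k+1}$ has finite energy.

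For the last step $k=n$ (and also $k=n-1$), where $u_{n+1}\equiv1$ has no decay, I will estimate directly. First,
$$
u_n(s)=\int_s^{T_1}P^{s,t}|f_n(t)|^2\,dt .
$$
Second, the source term of the $u_{n-1}$-equation is $|f_{n-1}|^2u_n$, which can be paired against $u_{n-1}$ using the bound on $u_n$. To bound $u_n$ I will use Proposition \ref{prop_grad}, with $F=|f_n|^2$ viewed through the gradient bounds, or alternatively bound $\langle|f_n|^2u_n\rangle_\rho$ using $\|f_n(t)\sqrt\rho\|_2^2\lesssim\nu\theta+g_\nu(t)$. This yields the constant $C_0$ and the time factor: integrating $g_\nu$ over $[T_0,T_1]$ with Hölder gives $\int_{T_0}^{T_1}g_\nu\le\|g_\nu\|_{1+\varepsilon}(T_1-T_0)^{\varepsilon/(1+\varepsilon)}$, and bounded $g_\nu$ gives $(T_1-T_0)$.

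The main obstacle is closing the recursion with a geometric factor $K$ that depends only on $\nu$ and is uniform in $n$. The $g_\nu(t)\|u\|^2$ terms do not carry a gradient, so they cannot be absorbed. I will handle them with a Gronwall factor $\exp(\int g_\nu)$, which is bounded since $g_\nu\in L^1$, or by inserting an exponential time weight $e^{-\lambda\int_t^{T_1}g_\nu}$ into the energy, so that only the $\nu$-parts remain in $K$. A second point is controlling the factor $\langle|f_k|^2u_{k+1}^2\rangle_\rho$ by $E_{k+1}$, which again uses form-boundedness. I expect $K\lesssim \nu/\kappa$, which is small when $\nu$ is small.
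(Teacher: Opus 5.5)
Your plan is essentially the paper's proof. It uses the same backward equations for $u_k$, the same $L^2_\rho$ energy estimates with a Young split of $\langle |f_k|^2u_{k+1},u_k\rangle_\rho$ and form-boundedness tested on $u\sqrt{\rho}$, an increasing exponential time weight (the paper's $e^{F}$, $e^{G}$) that is the same on both sides of the recursion and so enters only $C_0$, and the closing bound $\langle f_n^2\rangle_\rho\leq C\nu\theta+Cg_\nu$. Two variations are harmless: you reach $\mathbf{E}u_1(T_0,X^x_{T_0})$ via Proposition \ref{contr_prop} instead of extending $u_1$ backward to $[0,T_0]$, and you carry $\sup_s\|u_k(s)\|^2_{L^2_\rho}$ instead of $\int_{T_0}^{T_1}g_\nu e^{G}\langle u_k^2\rangle_\rho\,dt$ in the energy, which still closes because the Young parameter $\beta$ may be taken of order $\kappa/\nu$.

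Of your hedged alternatives, however, only these two work. Proposition \ref{prop_grad} with $F=|f_n|^2$ would need $|f_n|^2\in\mathbf{F}_\mu$ and $\|f_n^2\|_{L^2([T_0,T_1],L^2_\rho)}<\infty$, and neither follows from $f_n\in\mathbf{F}_\nu$. A plain Gronwall argument produces the factor $e^{c\beta\int_{T_0}^{T_1}g_\nu}$ rather than $e^{\int g_\nu}$, and this blows up exactly as $\beta$ is made large to shrink the $u_{k+1}$-coefficient $\frac{1}{4\beta}$, so it cannot give $K$ small uniformly in $T_0,T_1$.
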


\begin{corollary} 
\label{cor1}
For every integer $n \geq 2$,
$$
\int_{\mathbb R^d} \biggl[\mathbf{E} \biggl(\int_0^t |\nabla \sigma(r,X_r^x)|^2 ds \biggr)^n \biggr]^2 \rho(x) dx \leq C_0(n!)^2
K^{n-2}t^{\frac{\varepsilon}{1+\varepsilon}}.$$
If, moreover, $g_\nu \in L^{\infty}(\mathbb R)$, then $t^{\frac{\varepsilon}{1+\varepsilon}}$ is replaced with $t$.
\end{corollary}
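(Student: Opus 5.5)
The corollary should follow from Proposition \ref{prop1} by symmetrizing the $n$-th power of the time integral into an integral over the ordered simplex. Set $T_0=0$, $T_1=t$, and $f_i:=|\nabla\sigma|$ for all $i$. By \eqref{nabla_sigma}, $|\nabla\sigma|\in\mathbf F_\nu$ with $g_\nu\equiv c_\nu$, which is bounded, so both \eqref{estimate_g0} and \eqref{estimate_g} are available. (I read the integrand $\int_0^t|\nabla\sigma(r,X_r^x)|^2\,ds$ as $\int_0^t|\nabla\sigma(s,X^x_s)|^2\,ds$.)

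First, write $h(s):=|\nabla\sigma(s,X_s^x)|^2\ge 0$. I would then use the elementary identity
$$
\Big(\int_0^t h(s)\,ds\Big)^n = n!\int_{\Delta_n(0,t)} h(t_1)\cdots h(t_n)\,dt_1\dots dt_n .
$$
This holds because $[0,t]^n$ is, up to a null set, the disjoint union of the $n!$ images of $\Delta_n(0,t)$ under coordinate permutations, and the integrand is symmetric. Tonelli applies since $h\ge0$, so the identity holds pathwise with values in $[0,\infty]$.

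Second, take the expectation, square, multiply by $\rho(x)$ and integrate in $x$:
$$
\int_{\mathbb R^d}\Big[\mathbf E\Big(\int_0^t h\,ds\Big)^n\Big]^2\rho\,dx=(n!)^2\Big\|\mathbf E\int_{\Delta_n(0,t)}\prod_{i=1}^n|f_i(t_i,X^x_{t_i})|^2\,dt\Big\|^2_{L^2_\rho}.
$$
Applying \eqref{estimate_g0} with $T_1-T_0=t$ bounds the right-hand side by $(n!)^2C_0K^{n-2}t^{\frac{\varepsilon}{1+\varepsilon}}$. When $g_\nu$ is bounded, \eqref{estimate_g} gives the same bound with $t$ in place of $t^{\frac{\varepsilon}{1+\varepsilon}}$.

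There is no real obstacle here. The only points to check are that $f_i=|\nabla\sigma|$ satisfies the hypotheses of Proposition \ref{prop1}, and that the constants $C_0$, $K$ do not depend on the smoothness of $\sigma$. Both are guaranteed by that proposition, since form-boundedness holds uniformly in the approximations.
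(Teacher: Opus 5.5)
Your proposal is correct and is the paper's own argument: rewrite the $n$-th power as $n!$ times the integral over the ordered simplex $\Delta_n(0,t)$, then apply Proposition~\ref{prop1} with $f_i=|\nabla\sigma|$, $T_0=0$, $T_1=t$. You also read the typo $\int_0^t|\nabla\sigma(r,X_r^x)|^2\,ds$ correctly as $\int_0^t|\nabla\sigma(s,X_s^x)|^2\,ds$.
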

\begin{proof}[Proof of Corollary \ref{cor1}] This follows from
\begin{align*}
\int_{\mathbb R^d} & \biggl[\mathbf{E} \biggl(\int_0^t |\nabla \sigma(r,X_r^x)|^2 ds \biggr)^n \biggr]^2  \rho(x) dx\\
& = \int_{\mathbb R^d} \biggl[n!\mathbf{E}  \int_{\Delta_n(0,t)} \prod_{i=1}^n|\nabla \sigma(r_i,X_{r_i}^x)|^2 dr_1,\cdots,dr_n  \biggr]^2 \rho(x)dx.
\end{align*}
\end{proof}

\begin{remark}
\label{trans_rem}
The assertions of Proposition \ref{prop1} and Corollary \ref{cor1} remain valid if we replace weight $\rho$ by its translation $\rho_y$, for any $y \in \mathbb R^d$ fixed, defined by $\rho_y(x):=\rho(x-y)$. In particular, the constants in these estimates do not depend on $y$ since the form-boundedness condition on $f_i$ and $b$ does not involve any special points.
\end{remark}

\begin{proof}[Proof of Proposition \ref{prop1}]
Put $u_{n+1}=1$ and define consecutively $$h_k=f_k^2 u_{k+1}, \quad k=1,\dots,n,$$
where $u_k$ solves the terminal-value problem on $[T_0,T_1]$
\begin{equation}
\label{eq_uk}
\partial_t u_k + \frac{1}{2}a \cdot \nabla^2 u_k +h_k=0, \quad 
u_k(T_1,\cdot)=0.
\end{equation}
Then, repeating the proof of \cite[Lemma 4.2]{RZ}, we obtain
$$
\mathbf{E}_{\mathcal F^W_{T_0}} \int_{\Delta_{n}(T_0,T_1)} \prod_{i=1}^n \biggl |f_i(t_i, X_{t_i}^x) \biggr |^{2} dt_1 \dots dt_n=u_1(T_0,X_{T_0}^x).
$$
Therefore, 
\begin{align*}
\int_{\mathbb R^d }\left|\mathbf{E} \int_{\Delta_{n}(T_0,T_1)} \prod_{i=1}^n |f_i(t_i, X_{t_i}^x) |^{2} dt_1 \dots dt_n \right|^2 dx =\int_{\mathbb R^d } |\mathbf{E}u_1(T_0,X_{T_0}^x)|^2 dx.
\end{align*}
The goal is to control $u_1$ in terms of $u_2$ (Step 1), $u_2$ in terms of $u_3$ (Step 2), and so on, until $u_n$ (Step $n$). Note that $u_n$ can be estimated explicitly because $u_{n+1}=1$. 

1.~Let us first consider the special case: 
$$
\text{$g_{\delta_a}=g_{\nu}=0$ in conditions $\nabla a \in \mathbf{F}_{\delta_a}$, $f_i \in \mathbf{F}_{\nu}$, and $\rho \equiv 1$.}
$$
The latter forces us to impose an additional \textit{temporary} condition on the $f_i$:
\begin{equation}
\label{A}
\tag{A}
\text{all $f_i$ have supports in $[0,T] \times B_R$ for some fixed $R$ independent of $n$}.
\end{equation}
Below we will exclude this condition using non-constant (i.e.\,polynomially vanishing at infinity) weight $\rho$. 

\begin{remark} This proposition plays a crucial role in the proof of Theorem \ref{thm1}. Specifically, we will be applying it to $f_i=\nabla_k \sigma_{il}$. If we put condition \eqref{A} in the assumptions of Proposition \ref{prop1}, this will require us to assume that the matrix of diffusion coefficients has form $\sigma=\sigma_{\const} + \sigma_c$, where $\sigma_{\const}$ is a constant matrix and $\sigma_c$ has compact support. Since we are interested in describing admissible local discontinuities of $\sigma$, this would not be a deal breaking extra condition on $\sigma$, but with a few additional efforts we will avoid it by means of the weight $\rho$.
\end{remark}

What we will obtain for $u_2,\dots, u_n$ are the following two inequalities
\begin{equation}
\label{u2_un}
\int_{T_0}^{T_1}\langle |\nabla u_{2}|^2 \rangle  \leq K^{n-2} \int_{T_0}^{T_1} \langle |\nabla u_n|^2\rangle  
\end{equation}
where $0<K<1$ can be made as small as needed by assuming that the form-bounds $\delta_{a}$ and $\nu$ are sufficiently small,
and 
\begin{equation}
\label{un}
\int_{T_0}^{T_1}\langle |\nabla u_n(s)|^2 \rangle ds \leq  C(T_1-T_0)
\end{equation}
whose proof is basically the energy inequality for \eqref{eq_uk}. The relationship between $u_1$ and $u_2$ is different: we need to prove inequality
\begin{equation}
\label{prelim_est}
\int_{\mathbb R^d } |\mathbf{E}u_1(T_0,X_{T_0}^x)|^2 dx \leq \int_{T_0}^{T_1}\langle |\nabla u_{2}|^2 \rangle.
\end{equation}
Proposition \ref{prop1} for $g_{\delta_a}=g_\nu=0$ and $\rho \equiv 1$ follows right away from \eqref{u2_un}-\eqref{prelim_est}.

Let us prove \eqref{u2_un}-\eqref{prelim_est}. Recall that the action of the row-divergence operator $\nabla$ on $a=\sigma\sigma^{\scriptscriptstyle \top}$ is defined as the vector field $\nabla a:\mathbb R^{1+d} \rightarrow \mathbb R^d$ with components $(\nabla a )_{k} :=  \sum_{i=1}^{d} ( \nabla_{i} a_{ik})$. The boundedness of $\sigma$ and the hypothesis of form-boundedness of $|\nabla \sigma|$ of Theorem \ref{thm1} ensure that
\begin{equation}
\label{a1}
\nabla a \in \mathbf{F}_{\delta_a}\text{ for }\delta_a=C\nu.
\end{equation}

\medskip

\noindent Step 1 (proof of \eqref{prelim_est}). Set $v(t,x):=\mathbf{E}_{X_t=x}u_1(T_0,X_{T_0}^x)$. The function $v$ satisfies
the terminal-value problem on $[0,T_0]$
$$
\partial_t v +  \frac{1}{2} a\cdot\nabla^2 v=0, \quad v(T_0,\cdot)=u_1(T_0,\cdot).
$$
We need to bound $\langle |v(0,\cdot)|^2\rangle$, where $v(0,\cdot)=\mathbf{E}_{X_0=x}u_1(T_0,X_{T_0}^x) \equiv \mathbf{E}u_1(T_0,X_{T_0}^x)$. So, we define
$$
\tilde{u}(t,\cdot):=\left\{
\begin{array}{ll}
v(t,\cdot) & t \in [0,T_0] \\
u_1(t,\cdot) & t \in [T_0,T_1].
\end{array}
\right.
$$
Then $\tilde{u}$ satisfies on the whole interval $[0,T_1]$ the terminal-value problem 
\begin{equation}
\label{tilde_u}
\partial_t \tilde{u} +  \frac{1}{2} a \cdot \nabla^2 \tilde{u}+\tilde{h}=0, \quad \tilde{u}(T_1,\cdot)=0,
\end{equation}
where $$
\tilde{h}(t,\cdot)=\left\{
\begin{array}{ll}
0 & t \in [0,T_0], \\
h_1(t,\cdot) & t \in ]T_0,T_1].
\end{array}
\right.
$$
So, what we need to bound is $\langle |\tilde{u}(0,\cdot)|^2\rangle$. To this end, we multiply the equation in \eqref{tilde_u} by $\tilde{u}$ and integrate over $[0,T_1] \times \mathbb R^d$:
\begin{equation}
\label{tilde_en}
\frac{1}{2}\langle |\tilde{u}(0,\cdot)|^2\rangle - \frac{1}{2}\int_0^{T_1} \langle a \cdot \nabla^2 \tilde{u},\tilde{u}\rangle = \int_0^{T_1}\langle \tilde{h},\tilde{u}\rangle.
\end{equation}
We now use our hypothesis on $a$:
\begin{align}
-\int_0^{T_1} \langle a \cdot \nabla^2 \tilde{u},\tilde{u}\rangle & = - \int_0^{T_1} \langle \nabla \cdot a \cdot \nabla \tilde{u}, \tilde{u}\rangle + \int_0^{T_1} \langle (\nabla a) \cdot \nabla \tilde{u},\tilde{u}\rangle \notag \\
& (\text{integrate by parts and use $a \geq \kappa I$  in the first term,} \notag \\
& \text{apply Cauchy-Schwarz in the second term}) \notag \\
& \geq \kappa \int_0^{T_1} \langle |\nabla \tilde{u}|^2\rangle - \biggl(\frac{1}{2\sqrt{\delta_a}}\int_0^{T_1}\langle |\nabla a|^2,\tilde{u}^2\rangle + \frac{\sqrt{\delta_a}}{2}\int_0^{T_1}\langle |\nabla \tilde{u}|^2\rangle\biggr) \notag\\
& (\text{use \eqref{a1}, where, recall, for now $g_{\delta_a}=0$}) \notag \\
& \geq (\kappa-\sqrt{\delta_a}) \int_0^{T_1} \langle |\nabla \tilde{u}|^2\rangle. \label{a_calc}
\end{align}
Next, we estimate the RHS of \eqref{tilde_en}:
\begin{align}
\int_0^{T_1}\langle \tilde{h},\tilde{u}\rangle & = \int_{T_0}^{T_1}\langle h_1,u_1\rangle = \int_{T_0}^{T_1}\langle f_1^2 u_2,u_1\rangle \notag \\
& \leq \beta \int_{T_0}^{T_1} \langle f_1^2, u_1^2\rangle + \frac{1}{4\beta}\int_{T_0}^{T_1}\langle f_1^2,u_2^2\rangle\notag \\
& \text{(use $f_i \in \mathbf{F}_\nu$)} \notag \\
& \leq \beta \nu \int_{T_0}^{T_1}\langle |\nabla u_1|^2\rangle + \frac{\nu}{4\beta} \int_{T_0}^{T_1}\langle |\nabla u_2|^2\rangle \notag \\
& \leq \beta \nu \int_{0}^{T_1}\langle |\nabla \tilde{u}|^2\rangle+ \frac{\nu}{4\beta} \int_{T_0}^{T_1}\langle |\nabla u_2|^2\rangle. \label{h_calc}
\end{align}
Applying these estimates in \eqref{tilde_en}, we arrive at
\begin{equation}
\label{U_est}
\frac{1}{2}\langle |\tilde{u}(0,\cdot)|^2\rangle + C_1 \int_{0}^{T_1}\langle |\nabla \tilde{u}|^2\rangle \leq C_2 \int_{T_0}^{T_1}\langle |\nabla u_2|^2\rangle,
\end{equation}
where $C_1=\frac{1}{2}(\kappa-\sqrt{\delta_a}) - \beta \nu$, $C_2=\frac{\nu}{4\beta}$.
So, $0<\frac{C_2}{C_1}<1$ can be made arbitrarily small by assuming that the form-bound $\delta_a$ and $\nu$ are sufficiently small. 
So, we get
\begin{equation*}
\int_{\mathbb R^d}\mathbf{E}|u_1(T_0,X_{T_0}^x)|^2 dx \leq 2C_2 \int_{T_0}^{T_1}\langle |\nabla u_2|^2\rangle,
\end{equation*}
i.e.\,we have proved \eqref{prelim_est}. (At this step we actually did not need the term with $\nabla \tilde{u}$ in \eqref{U_est}, but since we will be repeating this calculation below, we kept it.)

\medskip

\noindent Step 2 (proof of \eqref{u2_un}). Now, we repeat  the same procedure for $u_2$ in place of $\tilde{u}$. That is, we multiply equation \eqref{eq_uk} (for $k=2$) by $u_2$ and integrate over $[T_0,T_1] \times \mathbb R^d$ to obtain
$$
\frac{1}{2}\langle u_2^2(T_0)\rangle + \frac{1}{2}(\kappa-\sqrt{\delta_a})\int_{T_0}^{T_1}\langle  |\nabla u_2|^2 \rangle = \int_{T_0}^{T_1} \langle h_2,u_2\rangle,
$$
where
\begin{align*}
\int_{T_0}^{T_1} \langle h_2,u_2\rangle ds & =\int_{T_0}^{T_1} \langle f_2^2 u_3,u_2 \rangle  \\
&\leq \beta\int_{T_0}^{T_1} \langle f_2^2, u^2_3\rangle + \frac{1}{4\beta} \int_{T_0}^{T_1} \langle f_2^2, u_2^2\rangle \\
& (\text{we are using $f_i \in\mathbf{F}_\nu$}) \\
& \leq \beta \nu  \int_{T_0}^{T_1} \langle |\nabla  u_2(s)|^2\rangle  +  \frac{\nu}{4\beta}\int_{T_0}^{T_1} \langle |\nabla  u_3(s)|^2 \rangle.
\end{align*}
Thus, we arrive at
$$
\int_{T_0}^{T_1}\langle |\nabla u_2|^2 \rangle \leq \frac{C_2}{C_1} \int_{T_0}^{T_1} \langle |\nabla u_3|^2\rangle.
$$
If $n > 3$, we repeat this $n-3$ more times:
$$
\int_{T_0}^{T_1}\langle |\nabla u_{2}|^2 \rangle  \leq \biggl(\frac{C_2}{C_1}\biggr)^{n-2} \int_{T_0}^{T_1} \langle |\nabla u_n|^2\rangle,
$$
i.e.\,we have \eqref{u2_un}.

\medskip

\noindent Step 3 (proof of \eqref{un}). Finally, we estimate $\int_{T_0}^{T_1}\langle |\nabla u_n|^2\rangle  $. Arguing as above,
we have (recall that $u_{n+1}=1$)
\begin{align*}
\int_{T_0}^{T_1}\langle |\nabla u_n|^2 \rangle & \leq C_3\int_{T_0}^{T_1} \langle  
f_n^2,  u_n\rangle \\
& (\text{we apply the Cauchy-Schwarz inequality}) \\
& \leq C_4\int_{T_0}^{T_1} \langle f_n^2 \rangle  + \frac{1}{2}\int_{T_0}^{T_1} \langle f_{n}^{2},  u_n^2 \rangle  \\
& \leq C_4\int_{T_0}^{T_1} \langle f_n^2 \rangle + \frac{\nu}{2}\int_{T_0}^{T_1} \langle |\nabla u_n|^2  \rangle \\
& (\text{using assumption ($A$),} \\
& \text{we apply $f_n \in \mathbf{F}_\nu$ in $\int_{T_0}^{T_1} \langle f_n^2 \varphi^2\rangle  \geq \int_{T_0}^{T_1} \langle f_n^2 \rangle $ for a smooth $\varphi \geq \mathbf{1}_{B_R(0)}$}) \\
& \leq C_5(T_1-T_0) + \frac{\nu}{2}\int_{T_0}^{T_1} \langle |\nabla u_n|^2  \rangle.
\end{align*}
Thus, $(1- \frac{\nu}{2})\int_{T_0}^{T_1}\langle |\nabla u_n|^2 \rangle  \leq  C_5(T_1-T_0)$, i.e.\,since $\nu$ is small, we obtain \eqref{un}.

\medskip

2.~We now explain what needs to be modified in the previous calculations to handle the case $g_{\delta_a}, g_{\nu} \in L^{1+\varepsilon}(\mathbb R)$, but still assuming for now $\rho \equiv 1$.

Every time we were applying the form-boundedness condition on $\nabla a$ or $f_i$, we were getting terms with bad sign that needed to be absorbed by the dispersion term in the equation, cf.\,\eqref{a_calc}. Now, with $g_{\delta_a}, g_{\nu} \neq 0$, we will be getting additional bad terms $\int_0^{T_1}g_{\delta_a}\langle \tilde{u}^2\rangle$ and $\int_{T_0}^{T_1}g_{\nu}\langle \tilde{u}^2\rangle$. These terms, however, can be dealt with simply by introducing in the equation a time-dependent exponential factor. In detail, the following are the extensions of Steps 1-3 from above:

\medskip

\text{Step 1':} Put $$F(t):=\lambda \int_0^t \big[g_{\delta_a}(s)+g_\nu(s)\big]ds,$$ where $\lambda$ is to be fixed sufficiently large (depending on the values of $\delta_a$ and $\nu$). 
We multiply \eqref{tilde_u} by $e^{F}$:
$$
\partial_t (e^{F}\tilde{u}) - F'e^{F}\tilde{u} + \frac{1}{2}a \cdot \nabla^2 e^{F}\tilde{u}+e^{F}\tilde{h}=0, \quad e^{F(T_1)}\tilde{u}(T_1,\cdot)=0.
$$
After multiplying the previous equation by $\tilde{u}$ and integrating over $[0,T_1] \times \mathbb R^d$, one sees that the second term from the left
$$
\int_0^{T_1} \langle F'e^{F}\tilde{u}^2 \rangle= \lambda \int_0^{T_1} (g_{\delta_a}+g_\nu)e^{F}\langle \tilde{u}^2 \rangle 
$$
will
absorb the new terms $\frac{1}{2\sqrt{\delta_a}}\int_0^{T_1}\langle g_{\delta_a} e^{F} \tilde{u}^2\rangle$ and $\beta\int_{0}^{T_1}\langle g_\nu e^{F}\tilde{u}^2 \rangle$ that now appear in \eqref{a_calc} and \eqref{h_calc} as long as we fix $\lambda \geq \frac{1}{2\sqrt{\delta_a}} + \beta$. This will give us, instead of \eqref{U_est}, the following estimate (note that $e^{F(0)}=1$):
$$
\frac{1}{2}\langle |\tilde{u}(0,\cdot)|^2\rangle + C_1 \int_{0}^{T_1} e^{F}\langle|\nabla \tilde{u}|^2\rangle \leq \frac{\nu}{4\beta} \int_{T_0}^{T_1}e^F \langle |\nabla u_2|^2\rangle + \frac{1}{4\beta}\int_{T_0}^{T_1}e^{F}g_\nu \langle   u_2^2 \rangle,
$$
where $C_1=\frac{1}{2}(\kappa-\sqrt{\delta_a}) - \beta \nu$ is as before.
Hence
\begin{equation}
\label{u1_g_ineq}
\mathbf{E}u_1(T_0,X_{T_0}^x) \leq C\biggl(\int_{T_0}^{T_1}e^F \langle |\nabla u_2|^2\rangle + \int_{T_0}^{T_1}e^{F}g_\nu \langle   u_2^2 \rangle\biggr).
\end{equation}

\smallskip

\text{Step 2':} Set 
$$
G(t):=\lambda \int_{T_0}^t \big(g_{\delta_a}(s)+g_\nu(s)\big)ds.
$$ We multiply \eqref{eq_uk} for $k=2$ by $e^{G}$: 
$$
\partial_t (e^G u_2) - G'e^G u_2 + \frac{1}{2}a \cdot \nabla^2 e^G u_2 +e^G h_2=0, \quad 
u_2(T_1,\cdot)=0.
$$
Next, we multiply it by $u_2$ and integrate over $[T_0,T_1] \times \mathbb R^d$, obtaining (note that $e^{G(T_0)}=1$)
\begin{align*}
\frac{1}{2}\langle u_2^2(T_0)\rangle + (\frac{\lambda}{2}-\beta)\int_{T_0}^{T_1} g_\nu e^G \langle u_2^2\rangle & + (\kappa-\sqrt{\delta_a}-\beta \nu)\int_{T_0}^{T_1} e^G\langle |\nabla u_2|^2\rangle \\
& \leq \frac{1}{4\beta}\biggl(\nu\int_{T_0}^{T_1} e^G \langle |\nabla u_3|^2\rangle + \int_{T_0}^{T_1} e^Gg_\nu \langle u_3^2\rangle \biggr). 
\end{align*}
Since we are going to iterate this inequality ($u_3$ and $u_4$, then $u_4$ and $u_5$, etc), we need to be more precise when specifying the constants, including $\lambda$. Discarding the first (positive) term and assuming that $\nu \leq 1$, we get
\begin{align*}
(\frac{\lambda}{2}-\beta)\int_{T_0}^{T_1} g_\nu e^G \langle u_2^2\rangle & + (\kappa-\sqrt{\delta_a}-\beta \nu)\int_{T_0}^{T_1} e^G\langle |\nabla u_2|^2\rangle \\
& \leq \frac{1}{4\beta}\biggl(\int_{T_0}^{T_1} e^G \langle |\nabla u_3|^2\rangle + \int_{T_0}^{T_1} e^Gg_\nu \langle u_3^2\rangle \biggr). 
\end{align*}
So, we take $C_2:=\frac{1}{4\beta}$, $C_1:=\kappa-\sqrt{\delta_a}-\beta \nu$ and fix $\frac{\lambda}{2}>\beta$ so that $\frac{\lambda}{2}-\beta=C_1$. Note that by selecting $\beta$ sufficiently large we can make $C_2$ arbitrarily small. Accordingly, we will have to select $\nu$ small to have $C_1$, say, $C_1 \geq \frac{\kappa}{2}$ and have $0<\frac{C_2}{C_1}<1$.
This gives us inequality
\begin{align*}
 \int_{T_0}^{T_1} e^G\langle |\nabla u_2|^2\rangle  & +  \int_{T_0}^T g_\nu e^G \langle u_2^2\rangle\\
& \leq \frac{C_2}{C_1}\biggl(\int_{T_0}^{T_1} e^G \langle |\nabla u_3|^2\rangle + \int_{T_0}^{T_1} e^Gg_\nu \langle u_3^2\rangle \biggr)
\end{align*}
which we can iterate (since the selection of the parameters $\beta$, $\lambda$, $\delta_a$, $\nu$ does not depend on $n$), obtaining
\begin{align}
 \int_{T_0}^{T_1} e^G\langle |\nabla u_2|^2\rangle & +  \int_{T_0}^{T_1} g_\nu e^G \langle u_2^2\rangle \notag \\
& \leq \biggl(\frac{C_2}{C_1}\biggr)^{n-2}\biggl(\int_{T_0}^{T_1} e^G \langle |\nabla u_n|^2\rangle + \int_{T_0}^{T_1} e^Gg_\nu \langle u_n^2\rangle \biggr). \label{step2_est}
\end{align}

\medskip

\text{Step 3':}~Finally, we estimate $\int_{T_0}^{T_1} e^G \langle |\nabla u_n|^2\rangle + \int_{T_0}^{T_1} e^Gg_\nu \langle u_n^2\rangle$. Arguing as above,
we obtain (recall that $u_{n+1}=1$)
\begin{align*}
\int_{T_0}^{T_1} e^G \langle |\nabla u_n|^2\rangle + \int_{T_0}^{T_1} e^Gg_\nu \langle u_n^2\rangle & \leq C_3\int_{T_0}^{T_1} \langle  
e^G f_n^2,  u_n\rangle \\
& (\text{we are applying Cauchy-Schwarz inequality}) \\
& \leq C_4\int_{T_0}^{T_1} \langle e^G f_n^2 \rangle  + \frac{1}{2}\int_{T_0}^{T_1} \langle f_{n}^{2}, e^G u_n^2 \rangle  \\
& \leq C_4\int_{T_0}^{T_1} \langle e^G f_n^2 \rangle + \frac{\nu}{2}\int_{T_0}^{T_1} \langle e^G |\nabla u_n|^2  \rangle + \frac{1}{2}\int_{T_0}^{T_1} g_\nu e^G \langle u_n^2 \rangle \\
& \leq C_4\int_{T_0}^{T_1} \langle e^G f_n^2 \rangle  + \frac{1}{2}\biggl(\int_{T_0}^{T_1} \langle e^G |\nabla u_n|^2  \rangle + \int_{T_0}^{T_1} g_\nu e^G \langle u_n^2 \rangle\biggr).
\end{align*}
Thus, 
$$
\int_{T_0}^{T_1} e^G \langle |\nabla u_n|^2\rangle + \int_{T_0}^{T_1} e^Gg_\nu \langle u_n^2\rangle  \leq 2 C_4\int_{T_0}^{T_1} \langle e^G f_n^2 \rangle.
$$
We are now using the compact support assumption \eqref{A}, i.e.\,we apply $f_n \in \mathbf{F}_\nu$ in $\int_{T_0}^{T_1} \langle f_n^2 \varphi^2\rangle  \geq \int_{T_0}^{T_1} \langle f_n^2 \rangle $ for a smooth $\varphi \geq \mathbf{1}_{B_R(0)}$, obtaining
\begin{align}
\int_{T_0}^{T_1} e^G \langle |\nabla u_n|^2\rangle + \int_{T_0}^{T_1} e^Gg_\nu \langle u_n^2\rangle & \leq  C_5\int_{T_0}^{T_1}(1+g_\nu) \notag \\
& \leq C_6\bigl[T_1-T_0 + (T_1-T_0)^{\frac{\varepsilon}{1+\varepsilon}} \bigr],\label{step3_est}
\end{align}
where at the last step we have applied $g_\nu \in L_{\loc}^{1+\varepsilon}(\mathbb R)$.

Now, combining \eqref{u1_g_ineq}, \eqref{step2_est} and \eqref{step3_est}, and noting that both $F$, $G$ are non-negative continuous functions that are, thus, bounded  on $[T_0,T_1]$, we obtain the assertion of Proposition \ref{prop1} in the case $\rho \equiv 1$.

\medskip

3. We finally pass to the weighted space $L^2_\rho(\mathbb R^d)$ and
remove assumption \eqref{A}. We write
\[
 \langle h\rangle_\rho:=\int_{\mathbb R^d}h(x)\rho(x)\,dx.
\]
We first note that the weight $\rho$ works very well together with the form-boundedness
condition. Namely, for every fixed $\eta>0$, applying the form-boundedness
of $f_i$ to test function $\sqrt{\rho}\,u$ yields
\begin{align*}
 \langle f_i^2,u^2\rangle_\rho
 &\leq
 \nu\langle|\nabla(\sqrt{\rho}\,u)|^2\rangle
 +g_\nu\langle u^2\rangle_\rho \\
 &\leq
 (1+\eta)\nu\langle|\nabla u|^2\rangle_\rho
 +(g_\nu+C_\eta\nu\theta)\langle u^2\rangle_\rho.
\end{align*}
Similarly,
\begin{equation*}
 \langle|\nabla a|^2,u^2\rangle_\rho
 \leq
 (1+\eta)\delta_a\langle|\nabla u|^2\rangle_\rho
 +(g_{\delta_a}+C_\eta\delta_a\theta)
       \langle u^2\rangle_\rho.
\end{equation*}
Also,
\begin{align*}
 -\langle a\cdot\nabla^2u,u\rangle_\rho
 &=
 \langle a\nabla u,\nabla u\rangle_\rho
 +\langle(\nabla a)\cdot\nabla u,u\rangle_\rho
 +\int_{\mathbb R^d}
       a\nabla u\cdot\nabla\rho\,u\,dx.
\end{align*}
Since the matrix field $a$ is bounded and
$
 \frac{|\nabla\rho|^2}{\rho^2}\leq C\theta,
$
the last term satisfies
\begin{equation*}
 \left|
 \int_{\mathbb R^d}a\nabla u\cdot\nabla\rho\,u\,dx
 \right|
 \leq
 \eta\langle|\nabla u|^2\rangle_\rho
 +C_\eta\theta\langle u^2\rangle_\rho.
\end{equation*}
Thus, after first fixing $\eta>0$ sufficiently small, the terms
containing $\nabla\rho$ only produce an arbitrarily small perturbation
of the energy term $\langle|\nabla u|^2\rangle_\rho$, as well as an additional term of the form $C_\eta\theta\langle u^2\rangle_\rho.$ Now, we define
\begin{equation*}
 F(t):= \lambda\int_0^t \bigl[g_{\delta_a}(s)+g_\nu(s)+C_\eta\theta\bigr]\,ds, \quad G(t):=
 \lambda\int_{T_0}^t \bigl[g_{\delta_a}(s)+g_\nu(s)+C_\eta\theta\bigr]\,ds.
\end{equation*}
Multiplying the equations by
$e^F\widetilde u\,\rho$ and $e^Gu_k\,\rho$, respectively, repeat Steps $1'$-$3'$. The previous
estimates show that, provided $\eta$ and then $\theta$ are chosen
sufficiently small, the constants $C_1$ and $C_2$ are changed only by
arbitrarily small values. In particular, they can still be chosen so
that  $0<\frac{C_2}{C_1}<1$,
and $C_2/C_1$ can still be made arbitrarily small by requiring
the form-bounds $\delta_a$ and $\nu$ to be sufficiently small. We
therefore obtain
\begin{equation*}
 \langle|\tilde u(0)|^2\rangle_\rho \leq
 C\left(\int_{T_0}^{T_1}e^G\langle|\nabla u_2|^2\rangle_\rho dt + \int_{T_0}^{T_1}g_\nu e^G\langle u_2^2\rangle_\rho \right)
\end{equation*}
and
\begin{align*}
&\int_{T_0}^{T_1}e^G\langle|\nabla u_2|^2\rangle_\rho\,dt + \int_{T_0}^{T_1}g_\nu e^G\langle u_2^2\rangle_\rho dt \\
&\qquad\leq \left(\frac{C_2}{C_1}\right)^{n-2} \left[ \int_{T_0}^{T_1}e^G\langle|\nabla u_n|^2\rangle_\rho dt + \int_{T_0}^{T_1}g_\nu e^G\langle u_n^2\rangle_\rho\,dt \right].
\end{align*}
It remains to estimate the last expression. The weighted version of
Step $3'$ gives
$$
\int_{T_0}^{T_1}e^G\langle|\nabla u_n|^2\rangle_\rho dt + \int_{T_0}^{T_1}g_\nu e^G\langle u_n^2\rangle_\rho dt \leq C\int_{T_0}^{T_1}e^G\langle f_n^2\rangle_\rho\,dt.
$$
Here we no longer need the compact support assumption of $f_i$. Namely, applying the
form-boundedness inequality for $f_n$ to $\sqrt{\rho}$, we obtain
\begin{align*}
\langle f_n^2(t)\rangle_\rho
&\leq \nu\langle|\nabla\sqrt{\rho}|^2\rangle+g_\nu(t)\langle\rho\rangle \\
&\leq C\nu\theta+Cg_\nu(t),
\end{align*}
where we used $\langle\rho\rangle<\infty$ and $
|\nabla\sqrt{\rho}|^2 =\frac{|\nabla\rho|^2}{4\rho} \leq C\theta\rho$.
Since $G$ is non-negative and bounded on $[T_0,T_1]$, H\"older's
inequality now yields
\begin{align*}
\int_{T_0}^{T_1}e^G\langle f_n^2\rangle_\rho\,dt
&\leq C\left[\nu\theta(T_1-T_0)+\int_{T_0}^{T_1}g_\nu(t) dt \right] \\ 
&\leq C(T_1-T_0)^{\frac{\varepsilon}{1+\varepsilon}},
\end{align*}
where $T$ is fixed and $g_\nu\in L_{\loc}^{1+\varepsilon}(\mathbb R)$.

Combining the previous estimates, we obtain
\begin{equation*}
\left\|
\mathbf E\int_{\Delta_n(T_0,T_1)}\prod_{i=1}^n|f_i(t_i,X_{t_i}^{\,\cdot})|^2 dt_1\dots dt_n 
\right\|_{L^2_\rho(\mathbb R^d)}^2 \leq C_0K^{n-2} (T_1-T_0)^{\frac{\varepsilon}{1+\varepsilon}},
\end{equation*}
with $K=C_2/C_1$. The proof of Proposition \ref{prop1} is completed.
\end{proof}

\bigskip

\section{Proof of Theorem \ref{thm1}(\textit{i}): strong existence}
\label{section_3}

\begin{proposition}
\label{prop2}
Let $X_t^x=X_{0,t}^x$ be the strong solution to \eqref{sde0}. Then, 
for every integer $p \geq 2$, there exist constants $K_1$, $K_2$ independent of the smoothness of $\sigma$ such that

\begin{enumerate}[label=\rm(\textit{\roman*})]
\item
\begin{equation}
\label{compact_crit1}
 \sup_{y \in \mathbb R^d}\|\nabla X_t^x - I\|_{L^{2p}(B_1(y), \, L^p(\Omega))} \leq K_1 t^{\frac{1}{4p}}
\end{equation}
for all $0 \leq t \leq T$;
 
\item
\begin{equation}
\label{compact_crit2}
 \sup_{y \in \mathbb R^d} \|D_sX_t^x - \sigma(s, X_s^x)\|_{L^{2p}(B_1(y), \, L^p(\Omega))} \leq K_1|t-s|^{\frac{1}{4p}}
\end{equation}
for a.e. $s \in [0,T]$ and $0 \leq s \leq t \leq T$;

\item
\begin{equation}
\label{compact_crit3}
 \sup_{y \in \mathbb R^d} \|D_s X_t^x - D_{s'}X_t^x\|_{L^{2p}(B_1(y), \, L^p(\Omega))} \leq K_2 \big(|s'-s|^{\frac{1}{8p}} +  |s'-s|^{\frac{\gamma}{2p}}\big)
\end{equation}
for a.e. $s,s' \in [0,T]$ and $0 \leq s,s' \leq t \leq T$.
\end{enumerate}
\end{proposition}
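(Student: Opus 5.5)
Write $J_t:=\nabla X_t^x$, which solves the linear SDE
\[
J_t=I+\int_0^t \nabla\sigma_k(r,X_r^x)J_r\,dW_r^k .
\]
For $s\le t$, the Malliavin derivative satisfies
\[
D_sX_t^x=\sigma(s,X_s^x)+\int_s^t\nabla\sigma_k(r,X_r^x)D_sX_r^x\,dW_r^k .
\]
Both are linear matrix SDEs driven by the same random coefficient $A_r:=\nabla\sigma(r,X_r^x)$. So $D_sX_t^x=J_{s,t}\sigma(s,X_s^x)$ with $J_{s,t}=J_tJ_s^{-1}$ the flow from $s$ to $t$.

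\textbf{Main estimate (stochastic Gronwall).} The plan is to prove one estimate for $Y_t:=J_{s,t}-I$, $t\ge s$. It\^o's formula for $|Y_t|^{2}$, or rather for $(1+|Y_t|^2)^{p/2}$, gives
\[
d|Y|^2\le 2\langle Y,A\,dW\,(Y+I)\rangle+C|A_r|^2(1+|Y_r|^2)\,dr .
\]
A stochastic Gronwall lemma (Scheutzow type, or via the exponential $\exp(-C\int_s^t|A_r|^2dr)$ multiplied against $|Y|^2$) then yields
\[
\mathbf E|Y_t|^p\le \Bigl(\mathbf E\bigl[e^{C'\int_s^t|A_r|^2dr}\bigr]\Bigr)^{1/2}\Bigl(\mathbf E\Bigl(\int_s^t|A_r|^2dr\Bigr)^{p}\Bigr)^{1/2}.
\]
It is cleaner to write $Y_t=\int_s^t A(Y+I)\,dW$, apply BDG to get $\mathbf E|Y_t|^p\lesssim\mathbf E(\int_s^t|A|^2|Y+I|^2)^{p/2}$, and then use the exponential-weight trick.

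Both factors are controlled by Corollary~\ref{cor1} and its shifted version, using the Markov property at time $s$ and the translated weights of Remark~\ref{trans_rem}. Expanding the exponential into its series, $\sum_n C'^n\,\mathbf E(\int|A|^2)^n/n!$, gives in $L^2_\rho$ the bound $\sum_n C'^n\sqrt{C_0}\,K^{(n-2)/2}$. This converges once $K$ is small, i.e.\ once $\nu$ is small; this is exactly where the smallness of $\nu$ enters. The moment factor gives $C(t-s)^{1/2}$ in $L^2_\rho$, using the $g_\nu\in L^\infty$ case of \eqref{estimate_g}.

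\textbf{Passing to the norm in the statement.} Since $x\mapsto \mathbf E[\cdots]$ is estimated in $L^2_{\rho_y}$ and $\rho_y\ge c>0$ on $B_1(y)$, Cauchy--Schwarz in $\omega$ and Hölder in $x$ give
\[
\int_{B_1(y)}(\mathbf E|Y_t|^p)^{2}\,dx\le C\,(t-s)^{1/2}.
\]
Taking the $1/(2p)$ root gives the rate $t^{1/(4p)}$. With $s=0$ this is (i). For (ii), write
\[
D_sX_t-\sigma(s,X_s)=(J_{s,t}-I)\,\sigma(s,X_s),
\]
use the boundedness of $\sigma$, and apply the same estimate conditionally on $\mathcal F_s$. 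This conditioning is where the Markov property and the translation invariance of Remark~\ref{trans_rem} are used: $\mathbf E_{\mathcal F_s}$ reduces to the estimate started at $X_s^x$, and the law of $X_s^x$ is integrated against $\rho$ using Proposition~\ref{contr_prop}.

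\textbf{Part (iii).} For $s<s'\le t$, write
\[
D_sX_t-D_{s'}X_t=J_{s',t}\bigl(D_sX_{s'}-\sigma(s',X_{s'})\bigr).
\]
Then split
\[
D_sX_{s'}-\sigma(s',X_{s'})=\bigl(D_sX_{s'}-\sigma(s,X_s)\bigr)+\bigl(\sigma(s,X_s)-\sigma(s,X_{s'})\bigr)+\bigl(\sigma(s,X_{s'})-\sigma(s',X_{s'})\bigr).
\]
The first term is bounded by (ii), giving $|s'-s|^{1/(4p)}$. The third term is controlled by ($\mathbf H$) together with a density/occupation bound from Proposition~\ref{contr_prop}, which transfers $L^2_\rho$ norms of $\sigma(s,\cdot)-\sigma(s',\cdot)$ to moments of $X_{s'}$; this gives $|s'-s|^{\gamma/(2p)}$. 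The second term is the main obstacle, since $\sigma$ is discontinuous. I would handle it by writing $\sigma(X_{s'})-\sigma(X_s)=\int_s^{s'}$ of a mollified version and using the gradient bound of Proposition~\ref{prop_grad} with $F=|\nabla\sigma|$ to control $\mathbf E\int_s^{s'}|\nabla\sigma(X_r)|\,|d X_r|$. An alternative is to interpolate between $L^2$ boundedness and an $H^1$-type estimate of $\sigma$ along $X$. Either way one loses half of the exponent, which explains the rate $1/(8p)$.

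Finally, $J_{s',t}$ is bounded in $L^{2p}$-moments by the main estimate. Hölder's inequality, combined with doubling $p$ where needed, then finishes (iii).
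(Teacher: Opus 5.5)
Your treatment of (i) and (ii) is sound and close to the paper's. You use BDG and Cauchy--Schwarz, bound $\mathbf E\sup|J|^{2p}$ by an exponential moment of $\int|\nabla\sigma|^2$ via a stochastic Gronwall argument, and sum the exponential series using Corollary~\ref{cor1} with $K$ small. For (ii) you condition on $\mathcal F_s$ and use Proposition~\ref{contr_prop}. That is a legitimate alternative to the paper's direct use of Proposition~\ref{prop1} on $[T_0,T_1]=[s,t]$ for the process started at $0$.

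The gap is in (iii), in exactly the term you flag as the main obstacle, $\sigma(s,X_s^x)-\sigma(s,X_{s'}^x)$. Your primary mechanism does not work, for three reasons.
\begin{itemize}
\item $X$ is a martingale with nondegenerate quadratic variation, so $\int_s^{s'}|\nabla\sigma(X_r)|\,|dX_r|$ is not a finite quantity.
\item If you instead expand $\sigma_\eta(s,X_{s'})-\sigma_\eta(s,X_s)$ by It\^o's formula, you pick up $\frac12\int_s^{s'}a:\nabla^2\sigma_\eta(s,X_r)\,dr$. Nothing in the hypotheses controls these second derivatives uniformly in the smoothness of $\sigma$.
\item A mean-value expansion along the segment $[X_s,X_{s'}]$ evaluates $\nabla\sigma$ off the trajectory, where no occupation bound is available.
\end{itemize}
In any case, Proposition~\ref{prop_grad} only controls quantities of the form $\mathbf E\int F(r,X_r)\,dr$, not integrals against $|dX|$. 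Your ``interpolation'' alternative is too vague to count as an argument.

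The missing idea is never to differentiate $\sigma$ along the path. Instead, insert a spatial mollification and split
\[
\sigma(s,X_s)-\sigma(s,X_{s'})=(\sigma-\sigma_\eta)(s,X_s)+\bigl(\sigma_\eta(s,X_s)-\sigma_\eta(s,X_{s'})\bigr)+(\sigma_\eta-\sigma)(s,X_{s'}).
\]
\begin{itemize}
\item \textbf{Outer terms.} Use $|\cdot|^{2p}\le(2\|\sigma\|_\infty)^{2p-1}|\cdot|$. Then Proposition~\ref{contr_prop} transfers the first moment to $\|\sigma(s,\cdot)-\sigma_\eta(s,\cdot)\|_{L^2_\rho}\le C\eta$, by Lemma~\ref{approx_lem}($5'$). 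This is where form-boundedness of $|\nabla\sigma|$ enters, and only through $\nabla\sigma(s,\cdot)\in L^2_\rho$.
\item \textbf{Middle term.} Use the Lipschitz bound $\|\nabla\sigma_\eta\|_\infty\le C\eta^{-1}$ together with BDG, $\mathbf E|X_{s'}-X_s|^{2p}\le C|s'-s|^p$.
\end{itemize}
This bounds the term in $L^{4p}(B_1,L^{2p}(\Omega))$ by $C\bigl(\eta^{1/(2p)}+\eta^{-1}|s'-s|^{1/2}\bigr)$. Choosing $\eta=|s'-s|^{1/4}$ gives the $|s'-s|^{1/(8p)}$ rate.

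The other source of the exponent $1/(8p)$ is (ii) applied with $2p$ in place of $p$, which the Cauchy--Schwarz splitting forces. It is not a generic ``loss of half the exponent''.
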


\begin{proof}[Proof of Proposition \ref{prop2}] We will be applying Corollary \ref{cor1}. Without loss of generality, $y=0$. (In the general case $y \in \mathbb R^d$ we would have to apply Corollary \ref{cor1} with the translated weight $\rho_y$, see Remark \ref{trans_rem}.)

(\textit{i}) We will consider initial time $0 \leq s<T$, i.e.\,$X_{s,s}^x=x$; this will allow us to reuse the proof of (\textit{i}) in the proof of (\textit{ii}). Since $\sigma:[0,T]\times\mathbb{R}^d\to \mathbb{R}^{d\times d}$ is smooth with bounded derivatives, we can differentiate \eqref{sde0} in $x$:
\begin{equation}
\label{nabla_sde}
\nabla X_{s,t}^x-I =  \int_s^t \nabla \sigma (r,X_r^x)\cdot \nabla X_{s,r}^x dW_r, 
\end{equation}
where $\nabla \sigma(r,x) \equiv \nabla_x\sigma(r,x) = \left( \partial_j\sigma_{ik}(r,x) \right)_{1\leq i,j,k\leq d}
$
 and, in coordinates, 
$$
(\nabla_x\sigma)(r,X_r^x)\cdot\nabla_xX_{s,r}^x := \left( \sum_{l=1}^{d} \partial_l\sigma_{ik}(r,X_r^x)\, \partial_jX_{s,r}^{x,l} \right)_{1\leq i,j,k\leq d}.
$$
So, with the Brownian motion, we have
$$
(\nabla_x\sigma)(r,X_r^x) \cdot\nabla_xX_{s,r}^x\,dW_r = \left( \sum_{k=1}^{d}\sum_{l=1}^{d} \partial_l\sigma_{ik}(r,X_r^x)\,
\partial_jX_{s,r}^{x,l} dW_r^k \right)_{1\leq i,j\leq d}.
$$
Put for brevity
$$
J_{s,t}^x:=\nabla_x X_{s,t}^x, \quad J_{s,s}^x=I, 
$$
We estimate
\begin{align*}
\|J_{s,t}^x - I\|^{2p}_{L^{2p}((B_1(0),L^p(\Omega))} & = \int_{B_1(0)}\bigl[\mathbf{E}|J_{s,t}^x - I|^p \bigr]^2dx \\
& (\text{we use \eqref{nabla_sde} and then apply the BDG  inequality}) \\
& \leq C\int_{B_1(0)} \biggl[ \mathbf{E}\biggl(\int_s^t |\nabla \sigma(r,X_{s,r}^x)|^2 |J_{s,r}^x|^2 dr \biggr)^{\frac{p}{2}}\biggr]^2 dx \\
& \leq C \int_{B_1(0)} \biggl[ \mathbf{E}\sup_{s \leq r \leq t}|J^x_{s,r}|^p \biggl(\int_s^t |\nabla \sigma(r,X_{s,r}^x)|^2  dr \biggr)^{\frac{p}{2}}\biggr]^2 dx \\
& (\text{use Cauchy-Schwarz twice}) \\
& \leq C \int_{B_1(0)} \mathbf{E}\sup_{s \leq r \leq t}|J^x_{s,r}|^{2p}\, \mathbf{E}\biggl(\int_s^t |\nabla \sigma(r,X_{s,r}^x)|^2  dr \biggr)^p  dx \\
& \leq C \biggl(\int_{B_1(0)} \biggl[\mathbf{E}\sup_{s \leq r \leq t}|J^x_{s,r}|^{2p}\biggr]^2 dx\biggr)^{\frac{1}{2}} \biggl(\int_{B_1(0)} \biggl[ \mathbf{E}\biggl(\int_s^t |\nabla \sigma(r,X_{s,r}^x)|^2  dr \biggr)^p \biggr]^2 dx \biggr)^{\frac{1}{2}} \\
& (\text{apply Corollary \ref{cor1} the the second factor}) \\
& \leq C \biggl(\int_{B_1(0)} \biggl[\mathbf{E}\sup_{s \leq r \leq t}|J^x_{s,r}|^{2p}\biggr]^2 dx\biggr)^{\frac{1}{2}} C_1 t^{\frac{1}{2}},
\end{align*}
where $C_1:=C_0^{\frac{1}{2}}p!  K^{\frac{p-2}{2}}$ (since $p$ is fixed, the growth of $C_1$ in $p$ plays no role here). This yields assertion (\textit{i}) once we prove that the first factor is bounded from above by a constant independent of the smoothness of $\sigma$. To this end, we will use the following stochastic Gronwall-type estimate:

\begin{claim}
\label{claim_gronwall}

$$
\biggl[\mathbf{E}\sup_{s \leq r \leq t}|J^x_{s,r}|^{2p}\biggr]^2 \leq C_p \mathbf E e^{c_p\int_s^t |\nabla \sigma(r,X^x_{s,r})|^2 dr }.
$$

\end{claim}
\begin{proof}[Proof of Claim \ref{claim_gronwall}] Put for brevity $J_t:=J_{s,t}^x$ and  $V_t:=|J_t|^2$, where $x \in B_1(0)$ is fixed. Our goal is to estimate
$\mathbf{E}\sup_{s \leq r \leq t}|V_r|^{p}$. Identity \eqref{nabla_sde} becomes
$dJ_t=\sum_{l=1}^d A^l_t J_t dW^l_t$, where $A^l_t=(\nabla_x \sigma_{\cdot l})(t,X_{s,t}^x)$ (a $d \times d$ matrix field). By It\^{o}'s formula, 
\begin{align}
\label{Ito_V}
dV_t =  2\sum_{l=1}^d J_t\cdot\left(A_t^l J_t\right) dW_t^l + \sum_{l=1}^d |A_t^l J_t|^2 dt, 
\end{align}
where $\cdot$ is the Euclidean inner product in $\mathbb{R}^{d\times d}$, i.e.\,in coordinates,
\begin{align*}
J_t\cdot\left(A_t^l J_t\right)dW_t^l & = \sum_{i,j,q=1}^d (J_t)_{ij} (A_t^l)_{iq} (J_t)_{qj}\,dW_t^l \\
& = \sum_{i,j,q=1}^d \partial_jX_{s,t}^{x,i} \partial_q\sigma_{il}(t,X_{s,t}^x) \partial_jX_{s,t}^{x,q}dW_t^l.
\end{align*}
Since $\sigma$ is smooth, $J_t$ is invertible (for the proof, if needed, see Appendix \ref{J_app})
 and so $V_t>0$. Therefore, the identity \eqref{Ito_V} can be rewritten as
 $$
dV_t=q_t V_t dt + V_t dM_t,
$$
where
\begin{equation}
\label{q_t}
q_t:=\frac{\sum_{l=1}^d |A_t^l J_t|^2}{|J_t|^2} \leq |\nabla \sigma(t,X_{s,t}^x)|^2, 
\end{equation}
and
\begin{equation}
d M_t:=c_t dW_t, \quad c_t=(c_t^1,\dots,c_t^d) \quad \text{ with } c_t^l:=2\frac{J_t \cdot A^l_tJ_t}{|J_t|^2}, \quad |c_t|^2 \leq 4|\nabla \sigma(t,X_{s,t}^x)|^2. \label{M_t}
\end{equation}
Since $\nabla_x\sigma$ is bounded, cf.\,the beginning of this section, and thus so is $|c|$, the It\^{o} integral $ M_t=\int_s^t c_r\cdot dW_r =\sum_{l=1}^d\int_s^t c_r^l\,dW_r^l $, $t\leq T$, is  a continuous square-integrable martingale. Applying It\^{o}'s formula to $\log V_t$, 
\begin{align*}
d\log V_t &= \frac{1}{V_t} dV_t -\frac{1}{2V_t^2} d[ V ]_t \\
&= \frac{1}{V_t} \left(q_tV_t\,dt+V_t dM_t\right) -\frac{1}{2V_t^2} V_t^2 d[ M]_t\\
&= q_t dt+dM_t-\frac12\,d[ M]_t. 
\end{align*}
Then, we obtain the following representation for $V_t$:
$$
V_t=V_s e^{M_t-M_s-\frac{1}{2}([M]_t-[M]_s) + \int_s^t q_r dr},
$$
Hence
$$
V_t^p=V_s^p e^{p(M_t-M_s)-\frac{p}{2}([M]_t-[M]_s) + p\int_s^t q_r dr},
$$
which we rewrite, for a $\lambda>0$ to be chosen, as
$$
V_t^p=V_s^p (Z_t^\lambda)^{\frac{p}{\lambda}} e^{\frac{1}{2}(p\lambda-p)([M]_t-[M]_s) + p\int_s^t q_rdr},$$
where 
$
Z_t^\lambda=e^{\lambda(M_t-M_s)-\frac{\lambda^2}{2}([M]_t - [M]_s)}. 
$
 The process $ Z^\lambda$ is a positive continuous  martingale, $Z_s^\lambda = 1$ -- Novikov's condition is trivially satisfied for $M_t$ due to $d[ M ]_t=|c_t|^2dt$, the estimates in \eqref{q_t}, \eqref{M_t} and the boundedness $|\nabla \sigma|$. Also by \eqref{q_t}, \eqref{M_t}, 
$$
V_t^p \leq V_s^p (Z_t^\lambda)^{\frac{p}{\lambda}} e^{c_p\int_s^t |\nabla \sigma(r,X^x_{s,r})|^2 dr}.
$$ 
It easily follows that 
$$
\sup_{s \leq r \leq t} V_r^p \leq V_s^p \left(\sup_{s \leq r \leq t }Z_r^\lambda\right)^{\frac{p}{\lambda}} e^{c_p\int_s^t |\nabla \sigma(r,X^x_{s,r})|^2 dr}.
$$
Recalling that $J_s=I$, that $V_s$ is a dimension-dependent constant, and applying the Cauchy-Schwarz inequality, we obtain
\begin{equation}
\label{Vp_est}
\mathbf{E} \sup_{s \leq r \leq t} V_r^p \leq C \bigg(\mathbf{E}\bigg(\sup_{s \leq r \leq t }Z_r^\lambda\bigg)^{\frac{2p}{\lambda}}\bigg)^{\frac{1}{2}} \bigg( \mathbf{E} e^{2c_p\int_s^t |\nabla \sigma(r,X^x_{s,r})|^2 dr}\bigg)^{\frac{1}{2}}.
\end{equation}
Since $Z^\lambda$ is a positive continuous martingale with $Z_s^\lambda=1$, Doob's weak maximal inequality yields
$$
\mathbf{P}\left( \sup_{s\leq r\leq t}Z_r^\lambda\geq a \right) \leq \frac{\mathbf{E}Z_s^\lambda}{a} =
\frac{1}{a}, \qquad a>0.
$$
Furthermore, clearly, $ \mathbf{P}\left( \sup_{s\leq r\leq t}Z_r^\lambda\geq a \right)
\leq 1 \wedge \frac{1}{a}$. Now, applying the Cavalieri principle to $\sup_{s\leq r\leq t}Z_r^\lambda$ gives  
\begin{align*}
\mathbf{E}\bigg(\sup_{s \leq r \leq t }Z_r^\lambda\bigg)^{\frac{2p}{\lambda}}   &=
\frac{2p}{\lambda} \int_0^\infty a^{\frac{2p}{\lambda}-1} \mathbf{P}\left(\sup_{s\leq r\leq t}Z_r^\lambda\geq a \right) da\\
&\leq \frac{2p}{\lambda} \int_0^1a^{\frac{2p}{\lambda}-1} da + \frac{2p}{\lambda} \int_1^\infty a^{\frac{2p}{\lambda}-2}\,da\\
&=
1+\frac{\frac{2p}{\lambda}}{1-\frac{2p}{\lambda}}\\
&=
\frac1{1-\frac{2p}{\lambda}}.
\end{align*}
This, and the previous estimate \eqref{Vp_est}, yield Claim \ref{claim_gronwall}, provided that $\lambda$ is fixed sufficiently large.
\end{proof}

Armed with Claim \ref{claim_gronwall}, we can now show that $\mathbf{E}\sup_{s \leq r \leq t}|J^x_{s,r}|^{2p}$ is bounded by a constant independent of smoothness of $\sigma$. Namely, we bound the right-hand side of the estimate in Claim \ref{claim_gronwall} as follows. Write
\begin{align*}
\mathbf E e^{c_p\int_s^t |\nabla \sigma(r,X_{s,r})|^2 dr }=\sum_{n=0}^\infty \frac{c_p^n}{n!}\mathbf E \biggl(\int_s^t |\nabla \sigma(r,X_{s,r})|^2 dr  \biggr)^n.
\end{align*}
Then, using Cauchy-Schwarz,
\begin{align*}
\int_{B_1(0)} \mathbf E e^{c_p\int_s^t |\nabla \sigma(r,X_{s,r})|^2 dr } dx & \leq \sum_{n=0}^\infty \frac{c_p^n}{n!} |B_1(0)|^{\frac{1}{2}} \biggl(\int_{B_1(0)}\biggl[\mathbf E \biggl(\int_s^t |\nabla \sigma(r,X_{s,r})|^2 dr  \biggr)^n\biggr]^2 dx
\biggr)^{\frac{1}{2}} \\
& (\text{apply Corollary \ref{cor1} to the $n \geq 2$ terms}) \\
& \leq (\text{term $n=0$}) + (\text{term $n=1$}) + \sum_{n=2}^\infty\frac{c_p^n}{n!}|B_1(0)| C^{\frac{1}{2}}_0 n!
K^{\frac{n-2}{2}}t^{\frac{1}{2}}.
\end{align*}
The term corresponding to $n=0$ is, clearly, finite and independent of the smoothness of $\sigma$. The term $n=1$ is bounded by the term for $n=2$ by applying elementary inequality $\alpha \leq 1+\alpha^2$ for all $\alpha \geq 0$.
Regarding the terms corresponding to $n \geq 2$, i.e.\,the convergence of the series, it only remains to note that, by Corollary \ref{cor1}, the constant $K$ can be made as small as needed assuming that the form-bound $\nu$ is sufficiently small. 

Thus, $\mathbf{E}\sup_{s \leq r \leq t}|J^x_{s,r}|^{2p} dx$ is bounded by a constant independent of the smoothness of $\sigma$, from which assertion (\textit{i}) follows upon selecting everywhere above $s=0$.

\medskip

(\textit{ii}) The Malliavin derivative $D_sX_t^x$, $0 \leq s<t \leq T$, satisfies  SDE of the same type as \eqref{nabla_sde}:
$$
D_sX_t^x=\sigma(s,X_s^x)+\int_s^t \nabla \sigma(\tau,X_\tau^x)\cdot D_s X_\tau^x dW_\tau.
$$
So, we can repeat the proof of (\textit{i}) word by word, except that now the analogue of Claim \ref{claim_gronwall} takes form
$$
\biggl[\mathbf{E}\sup_{s \leq r \leq t}|D_{s}X_r^x|^{2p}\biggr]^2 \leq C_p \|\sigma\|^{4p}_\infty \mathbf E e^{c_p\int_s^t |\nabla \sigma(r,X^x_{s,r})|^2 dr }
$$
since the initial value for $D_s X_r^x$ at $r=s$ is $\sigma(s,X_s^x)$.

\medskip

(\textit{iii}) Set
$
\hat{J}_{s,t}^x:=J_{s,t}^{X_s^x}.
$
Let $0 \leq s < s' \leq t \leq T$. We note the standard identities
$$
D_s X_t^x=\hat{J}_{s',t}^x D_s X_{s'}^x
$$
and
\begin{equation}
\label{k2}
D_{s'}X_t^x=\hat{J}_{s',t} \sigma(s',X_{s'}^x).
\end{equation}
Hence
$$
D_s X_t^x - D_{s'}X_t^x = \hat{J}_{s',t}^x \big( D_s X_{s'}^x - \sigma(s',X_{s'}^x) \big).
$$
Apply the Cauchy-Schwarz inequality to the right-hand side, 
\begin{align}
\|\hat{J}_{s',t}^x \big( D_s X_{s'}^x & - \sigma(s',X_{s'}^x) \big)\|_{L^{2p}(B_1(0),L^{p}(\Omega))} \notag \\
& \leq \|\hat{J}_{s',t}^x\|_{L^{4p}(B_1(0),L^{2p}(\Omega))}\|D_s X_{s'}^x - \sigma(s',X_{s'}^x)\|_{L^{4p}(B_1(0),L^{2p}(\Omega))}. \label{iii_1}
\end{align}

1. The norm of the first factor
$
\|\hat{J}_{s',t}^x\|_{L^{4p}(B_1(0),L^{2p}(\Omega))}
$
is bounded from above by a constant independent of the smoothness of $\sigma$, which follows, via \eqref{k2}, from the uniform ellipticity condition $\sigma \sigma^{\top} \geq \kappa>0$, and thus $\|\sigma^{-1}\|_\infty \leq \kappa^{-\frac{1}{2}}$, and the bound on $\|D_{s'}X_t^x\|_{L^{4p}(B_1(0),L^{2p}(\Omega))}$ stemming from assertion (\textit{ii}) with $p$ replaced by $2p$.

\medskip

2. Next, we estimate the second factor in \eqref{iii_1}:
\begin{align}
\|D_s X_{s'}^x & - \sigma(s',X_{s'}^x)\|_{L^{4p}(B_1(0),L^{2p}(\Omega))} \notag \\
& \leq \|D_s X_{s'}^x - \sigma(s,X_{s}^x)\|_{L^{4p}(B_1(0),L^{2p}(\Omega))} + \| \sigma(s,X_{s}^x) - \sigma(s',X_{s'}^x)\|_{L^{4p}(B_1(0),L^{2p}(\Omega))}. \label{D2}
\end{align}

(a) The first term in \eqref{D2} has  the Malliavin derivative and $\sigma$  evaluated at the same time $s$. So, this term can be estimated by applying assertion (\textit{ii}) with $p$ replaced by $2p$: 
$$
 \|D_s X_{s'}^x - \sigma(s,X_{s}^x)\|_{L^{4p}(B_1(0),L^{2p}(\Omega))} \leq K_1|s-s'|^{\frac{1}{8p}}.
$$

(b) Let us handle the second term in \eqref{D2}:
\begin{align*}
\| \sigma(s,X_{s}^x) & - \sigma(s',X_{s'}^x)\|_{L^{4p}(B_1(0),L^{2p}(\Omega))}^{4p} \\
& \leq \| \sigma(s,X_{s}^x) - \sigma(s',X_{s}^x)\|_{L^{4p}(B_1(0),L^{2p}(\Omega))}^{4p} + \| \sigma(s',X_{s}^x) - \sigma(s',X_{s'}^x)\|_{L^{4p}(B_1(0),L^{2p}(\Omega))}^{4p} \\
& \leq 2^{4p-2}\|\sigma\|_\infty^{4p-2}\big\langle \big(\mathbf{E}|\sigma(s,X_s^x)-\sigma(s',X_s^x)|\big)^2\big\rangle_{x \in B_1(0)} + \| \sigma(s',X_{s}^x) - \sigma(s',X_{s'}^x)\|_{L^{4p}(B_1(0),L^{2p}(\Omega))}^{4p} \\
& =:I_1 + I_2.
\end{align*}

\begin{itemize}

\item[b.1] We have
\begin{align*}
(\text{constant times})\; I_1 \leq C_0 \big\langle \big(\mathbf{E}|\sigma(s,X_s^x)-\sigma(s',X_s^x)|\big)^2 \rho(x)\big\rangle_{x \in \mathbb R^d} & = \|P^{0,s}|\sigma(s,\cdot)-\sigma(s',\cdot)|\|^2_{L^2_\rho(\mathbb R^d)} \\
& (\text{apply Proposition \ref{contr_prop})}) \\
& \leq C\|\sigma(s,\cdot)-\sigma(s',\cdot)\|^2_{L_\rho^2(\mathbb R^d)} \\
& (\text{use ($\mathbf{H}$) in Theorem \ref{thm1}}) \\
& \leq C_2|s-s'|^{2\gamma},
\end{align*}
where the constant $C$ does not depend on the smoothness of $\sigma$.

\item[b.2] Next, let us estimate $I_2$. Once again, we are looking for an upper bound in terms of a positive power of $|s-s'|$, with constants that do not depend on the smoothness of $\sigma$. Denote by $\sigma_{\eta}$, $\eta \downarrow 0$, the standard mollifier in the spatial variable applied to $\sigma$. We write
\begin{align}
\| \sigma(s',X_{s}^x) & - \sigma(s',X_{s'}^x)\|_{L^{4p}(B_1(0),L^{2p}(\Omega))} \notag \\
& \leq \| \sigma(s',X_{s}^x) - \sigma_\eta(s',X_{s}^x)\|_{L^{4p}(B_1(0),L^{2p}(\Omega))} \tag{$K_1$}\label{s1} \\
&+ \| \sigma_\eta(s',X_{s}^x) - \sigma_\eta(s',X_{s'}^x)\|_{L^{4p}(B_1(0),L^{2p}(\Omega))} \tag{$K_2$} \label{s2} \\
&+ \| \sigma_\eta(s',X_{s'}^x) - \sigma(s',X_{s'}^x)\|_{L^{4p}(B_1(0),L^{2p}(\Omega))} \tag{$K_3$} \label{s3}.
\end{align}
The terms \eqref{s1} and \eqref{s3} are estimated by essentially arguing as in (b.1), e.g.\,for \eqref{s1}:
\begin{align*}
\| \sigma(s',X_{s}^x) - \sigma_\eta(s',X_{s}^x)\|_{L^{4p}(B_1(0),L^{2p}(\Omega))}^{4p} & \leq 2^{4p-2}\|\sigma\|_\infty^{4p-2}\big\langle \big(\mathbf{E}|\sigma(s',X_s^x)-\sigma_\eta(s',X_s^x)|\big)^2\big\rangle_{x \in B_1(0)} \\
& = 2^{4p-2}\|\sigma\|_\infty^{4p-2}\|P^{0,s}|\sigma(s',\cdot)-\sigma_\eta(s',\cdot)|\|_{L^2(B_1(0))}^2 \\ 
& \leq C \|P^{0,s}|\sigma(s',\cdot)-\sigma_\eta(s',\cdot)|\|_{L_\rho^2(\mathbb R^d))}^2 \\
& (\text{use Proposition \ref{contr_prop}}) \\
& \leq C \|\sigma(s',\cdot)-\sigma_\eta(s',\cdot)\|_{L_\rho^2(\mathbb R^d)}^2 \\
& (\text{use Lemma \ref{approx_lem}}) \\
& \leq C_1 \eta.
\end{align*}
Therefore, 
$$
\eqref{s1} \leq C_1 \eta^{\frac{1}{2p}}.
$$

Similarly,
$$
\eqref{s3} \leq C_1 \eta^{\frac{1}{2p}}.
$$

Let us deal with \eqref{s2}. We have $$| \sigma_\eta(s',X_{s}^x) - \sigma_\eta(s',X_{s'}^x)| \leq \|\nabla \sigma_{\eta}(s',\cdot)\|_{L^\infty(\mathbb R^d)} |X_s^x-X_{s'}^x|.$$ Hence, using $\|\nabla \sigma_\eta(s',\cdot)\|_{L^\infty(\mathbb R^d)} \leq C_1\eta^{-1}\|\sigma(s')\|_{L^\infty(\mathbb R^d)} \leq C \eta^{-1}$, we obtain
\begin{align*}
\mathbf{E} |\sigma_\eta(s',X_{s}^x) - \sigma_\eta(s',X_{s'}^x)|^{2p} & \leq C^{2p} \eta^{-2p} \mathbf E|X_s^x-X_{s'}^x|^{2p} \\
& = C^{2p} \eta^{-2p} \mathbf{E}\left|\int_{s}^{s'}\sigma(r,X_r^x)dW_r\right|^{2p} \quad \text{(apply BDG inequality)}\\
& \leq C_2 \eta^{-2p}|s'-s|^{p}.
\end{align*}
We arrive at the following estimate:
$$
\eqref{s2} \leq C_3 \eta^{-1}|s'-s|^{\frac{1}{2}}.
$$
\end{itemize}
Combining the estimates on \eqref{s1}-\eqref{s3}, we obtain
$$
I_2^{\frac{1}{4p}} \leq 2C_1\eta^{\frac{1}{2p}} + C_2 \eta^{-1}|s'-s|^{\frac{1}{2}}.
$$
The latter, and the estimate on $I_1$, allow us to bound the second term in  \eqref{D2} as follows:
\begin{align*}
\| \sigma(s,X_{s}^x) - \sigma(s',X_{s'}^x)\|_{L^{4p}(B_1(0),L^{2p}(\Omega))} & \leq I_1^{\frac{1}{4p}} + I_2^{\frac{1}{4p}} \\
& \leq C\bigl( |s'-s|^{\frac{\gamma}{2p}} + \eta^{\frac{1}{2p}} + \eta^{-1}|s'-s|^{\frac{1}{2}} \bigr).
\end{align*}
The left-hand side does not depend on $\eta$, so it is for us to select. We take $\eta=|s'-s|^{\frac{1}{4}}$, thus arriving at
\begin{align*}
\| \sigma(s,X_{s}^x) - \sigma(s',X_{s'}^x)\|_{L^{4p}(B_1(0),L^{2p}(\Omega))} & \leq C(|s'-s|^{\frac{\gamma}{2p}} + |s'-s|^{\frac{1}{8p}} + |s'-s|^{\frac{1}{4}}) \\
& (\text{use the fact that we work on a finite time interval}) \\
& \leq C_1(|s'-s|^{\frac{\gamma}{2p}} + |s'-s|^{\frac{1}{8p}}).
\end{align*}

\medskip

Finally, returning to \eqref{D2}, we can estimate
$$
\|D_s X_{s'}^x - \sigma(s',X_{s'}^x)\|_{L^{4p}(B_1(0),L^{2p}(\Omega))} \leq K_1 |s'-s|^{\frac{1}{8p}} +  C_1(|s'-s|^{\frac{\gamma}{2p}} + |s'-s|^{\frac{1}{8p}}).
$$
It remains to use this in \eqref{iii_1} to end the proof of (\textit{iii}).
\end{proof}

Armed with Propositions \ref{prop_grad} and \ref{prop2}, we can now prove Theorem \ref{thm1}(\textit{i}), i.e.\,strong existence. We follow the approach of \cite{RZ}. Since $\sigma_m$ are smooth, by classical theory there exists a unique continuous random field  $X^m:\Delta_2(T) \times \mathbb R^d \times \Omega \rightarrow \mathbb R^d$ such that
\begin{equation}
\label{sde2sigma}
X_{s,t}^{x,m}=x+ \int_s^t \sigma_m(r,X^{x,m}_{s,r})dW_r, \quad 0 \leq s \leq t \leq T, \quad x \in \mathbb R^d.
\end{equation}

\begin{remark}
\label{flow_rem}
We will use the follow standard properties of the approximating SDEs which are obtained from the Picard construction and pathwise uniqueness. For every $m$, the continuous version of
the random field
$
\bigl\{X_{r,t}^{x,m} \mid 0\leq r\leq t\leq T, x\in\mathbb R^d\bigr\}
$
may be chosen so that, on an event of probability one,
\begin{equation}
\label{approx_flow}
X_{r,t}^{x,m}=X_{s,t}^{X_{r,s}^{x,m},m},
\quad
0\leq r\leq s\leq t\le T, \quad x\in\mathbb R^d.
\end{equation}
Moreover, for every $r\leq t$ and $x\in\mathbb R^d$, $X_{r,t}^{x,m}$ is measurable with respect to $\mathcal F_{r,t}^W= \sigma(W_u-W_r \mid r\leq u\leq t)$
after completion. In particular, for fixed $s\leq t$, the random
map
$$
\mathbb R^d\ni y\longmapsto X_{s,t}^{y,m}
$$
is measurable with respect to $\mathcal F_{s,t}^W$ as a
$[C_{\rm loc}(\mathbb R^d)]^d$-valued random
variable and is independent of $\mathcal F_s^W$.
\end{remark}

\textbf{Step 1.}~We make a few preliminary observations.
By the standard Burkholder-Davis-Gundy inequality and the uniform (in $m$) boundedness of $\sigma_m$ on $\mathbb R^d$, 
\begin{align}
\mathbf{E}|X^{x,m}_{s,t_2}-X^{x,m}_{s,t_1}|^r &= \mathbf{E} \biggl| \int_{t_1}^{t_2} \sigma_m (t, X_{s,t}^{x,m}) \, dW_t \biggr|^r \notag \\
&\leq C_r \mathbf{E} \biggl| \int_{t_1}^{t_2} |\sigma_m (t, X_{s,t}^{x,m})|^2 \, dt \biggr|^{\frac{r}{2}} \leq  C(t_2 -t_1)^{\frac{r}{2}}. \label{est.3}
\end{align}
In particular, for $0 \leq s \leq t \leq T$,
\begin{equation}
\label{est.4}
\sup_m\sup_{x \in \mathbb R^d}\mathbf E|X_{s,t}^{x,m}-x|^r \leq C(t-s)^{\frac{r}{2}}.
\end{equation}
Next, one has 
\begin{equation}
\label{est.5}
\sup_m\sup_{0\leq s \leq t \leq T}\sup_{y \in \mathbb R^d} \mathbf{E}\int_{B_1(y)} |\nabla_x X^{x,m}_{s,t}|^r dx<\infty.
\end{equation}
Indeed,
\begin{align*}
\mathbf{E}\int_{B_1(y)} |\nabla_x X^{x,m}_{s,t}|^r dx
& \leq \left(\int_{B_1(y)} \bigl(\mathbf{E} |\nabla_x X^{x,m}_{s,t}|^r\bigr)^2 dx \right)^{\frac{1}{2}} |B_1(y)|^{\frac{1}{2}}  = C_d \|\nabla X^{\cdot,m}_{s,t}\|^r_{L^{2r}(B_1(y),L^r(\Omega))},
\end{align*}
so it remains to apply Proposition \ref{prop2}(\textit{i}).

\medskip
Let now $r>d$. (At Steps 4 and 5 we will have to fix  $r$ even larger).

\medskip

\textbf{Step 2.}~Let us compare solutions with different initial points, but the same initial time. Applying the Sobolev embedding theorem to \eqref{est.5}, we obtain the following bound on the H\"{o}lder seminorm
$$
\sup_m\sup_{0 \leq s \leq t \leq T} \sup_{y \in \mathbb R^d} \mathbf{E}[X_{s,t}^{\cdot,m}]^r_{C^{1-\frac{d}{r}}(B_1(y))}<\infty,
$$ 
Therefore,
for all $0 \leq s \leq t \leq T$, $x,y \in \mathbb R^d$ with $|x-y| \leq 1$, 
\begin{equation*}
\mathbf{E}|X^{x,m}_{s,t}-X^{y,m}_{s,t}|^r \leq C|x-y|^{r-d}, \quad r-d>0.
\end{equation*}
For $|x-y|>1$, we estimate, using \eqref{est.4}
$$
\mathbf{E}|X^{x,m}_{s,t}-X^{y,m}_{s,t}|^r \leq C_r (|x-y|^r + \mathbf{E}|X^{x,m}_{s,t}-x|^r + \mathbf{E}|X^{y,m}_{s,t}-y|^r) \leq C|x-y|^r.
$$
Combining the last two bounds, we obtain, for all $0 \leq s \leq t \leq T$, $x,y \in \mathbb R^d$,
\begin{equation}
\label{est.6}
\mathbf{E}|X^{x,m}_{s,t}-X^{y,m}_{s,t}|^r \leq C(|x-y|^{r-d} + |x-y|^r).
\end{equation}

\textbf{Step 3.}~Next, we compare solutions with the same initial points, but different initial times. That is, assume that $0 \leq  s_1 \leq  s_2 \leq  t$. 
By the flow property of the approximating solutions (see Remark \ref{flow_rem}),
$$
X_{s_1,t}^{x,m}=X_{s_2,t}^{X_{s_1,s_2}^{x,m},m}.
$$
Moreover, the map
$y\mapsto X_{s_2,t}^{y,m}$ is independent of $\mathcal F_{s_2}^W$, while $X_{s_1,s_2}^{x,m}$ is $\mathcal F_{s_2}^W$-measurable. Therefore,
$$
\mathbf E\left|X_{s_1,t}^{x,m}-X_{s_2,t}^{x,m}\right|^r=\mathbf E\big[\mathbf E\big|X_{s_2,t}^{y,m}-X_{s_2,t}^{x,m}\big|^r \big|_{y=X_{s_1,s_2}^{x,m}}\big].
$$
Hence
\begin{align}
\label{est.7}
\mathbf{E} | X_{s_1,t}^{x,m}-X_{s_2,t}^{x,m} |^r  &\leq  C_r \biggl(\mathbf{E}  \bigl| \int_{s_1}^{s_2}\sigma_m (s,X^{x,m}_{s_1,s}) \, dW_s \bigr |^r 
+ \mathbf{E}  \bigl|    \int_{s_2}^{t} [\sigma_m (s,X^{x,m}_{s_1,s}) - \sigma_m (s,X^{x,m}_{s_2,s}) ]\, dW_s \bigr|^r\biggr)  \notag \\
& (\text{apply \eqref{est.3} to the first term}) \\
  &\leq   C_r \biggl((s_2-s_1)^{\frac{r}{2}}  
 + \mathbf{E}  \bigl|    \int_{s_2}^{t} \bigl[\sigma_m (s,X^{X^{x,m}_{s_1,s_2}}_{s_2,s}) - \sigma_m (s,X^{x,m}_{s_2,s})\bigr] \, dW_s \bigr|^r\biggr)  \notag \\ 
 & (\text{reuse the SDE in the second term}) \\
&\leq C_r\biggl( |s_2-s_1|^{\frac{r}{2}} + \mathbf{E}\bigl | X^{X^{x,m}_{s_1,s_2}}_{s_2,t} - X^{x, m}_{s_2, t} \bigr|^r\biggr)  \\
&= 	C_r\biggl(|s_2-s_1|^{\frac{r}{2}} + \mathbf{E}\bigl[ \mathbf{E}  |X^{y, m}_{s_2,t} - X^{x, m}_{s_2, t} |_{y = X^{x,m}_{s_1,s_2}}^{r} \bigr]\biggr) \notag  \\
& (\text{use \eqref{est.6} in the second term}) \\
& \leq  	C_r\biggl(|s_2-s_1|^{\frac{r}{2}} + C\mathbf{E} \bigl(|X^{x,m}_{s_1,s_2} - x|^{r-d} + |X^{x,m}_{s_1,s_2} - x|^{r}\bigr)\biggr) \quad (\text{use again \eqref{est.3}})\notag\\
& \leq C_{1,r}(s_2-s_1)^{\frac{r-d}{2}} \notag
\end{align}
(there is additional $(s_2-s_1)^{\frac{r}{2}}$ term, but, since we work on a finite time interval, it is dominated by $(s_2-s_1)^{\frac{r-d}{2}}$). 

To summarize: estimates \eqref{est.3}, \eqref{est.6}, \eqref{est.7}  yield, for all $r>d$,
\begin{equation}
\label{ineq1}
\mathbf{E}|X_{s_1,t_1}^{x,m}-X_{s_2,t_2}^{y,m}|^r \leq C_{1,r}(|t_2-t_1|^{\frac{r}{2}} + |x-y|^{r-d} + |x-y|^r + |s_2-s_1|^{\frac{r-d}{2}} )
\end{equation}
for all $(s_i,t_i) \in \Delta_2(T)$, $i=1,2$.

\medskip

\textbf{Step 4.}~Proposition \ref{prop2}(\textit{i})-(\textit{iii}) yields

\begin{claim} 
\label{claim_com}
For fixed $y \in \mathbb R^d$ and $0 \leq r<t \leq T$, the random fields $X^{x,m}_{r,t}$, $x \in B_1(y)$
 verify conditions of Lemma \ref{lem_compact} where we take  $F_m(x):=X^{x,m}_{r,t}$, $U:=B_1(y)$.
\end{claim}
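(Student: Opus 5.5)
We need to check \eqref{a.1}--\eqref{a.3} for $F_m(x)=X^{x,m}_{r,t}$, $U=B_1(y)$, with $K$ independent of $m$. Throughout we apply Proposition \ref{prop2} to $\sigma_m$ in place of $\sigma$, with initial time $r$ instead of $0$ (the proof of Proposition \ref{prop2} already handles a general initial time $s$). This is legitimate because $\sigma_m$ satisfy \eqref{nabla_sigma} and ($\mathbf{H}$) uniformly in $m$, so $K_1,K_2$ do not depend on $m$. The passage from the mixed norms $L^{2p}(B_1(y),L^p(\Omega))$ of Proposition \ref{prop2} to the $L^2(U\times\Omega)$ quantities required in Lemma \ref{lem_compact} goes exactly as in the derivation of \eqref{est.5}: Hölder in $\omega$ and then Cauchy--Schwarz in $x$ give
$$
\mathbf E\int_{B_1(y)}|G(x)|^2dx\leq C_d\|G\|^2_{L^{4}(B_1(y),L^2(\Omega))}.
$$
So we use Proposition \ref{prop2} with $p=2$.

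\textbf{Condition \eqref{a.1}.} The term $\mathbf E\int_{B_1(y)}|X^{x,m}_{r,t}|^2dx$ is bounded by $C(|y|+1)^2+C\,T$ by \eqref{est.4}; this is finite for fixed $y$ and uniform in $m$. The gradient term $\mathbf E\int_{B_1(y)}|\nabla_xX^{x,m}_{r,t}|^2dx$ is controlled by \eqref{est.5} with exponent $2$, that is, by Proposition \ref{prop2}(\textit{i}) together with the triangle inequality $|\nabla X|\leq|\nabla X-I|+\sqrt d$.

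\textbf{Condition \eqref{a.2}.} For $s>t$ we have $D_sX^{x,m}_{r,t}=0$, and also for $s<r$, since $X^{x,m}_{r,t}$ is $\mathcal F^W_{r,t}$-measurable. For $s\in[r,t]$, Proposition \ref{prop2}(\textit{ii}) gives
$$
\|D_sX^{x,m}_{r,t}\|\leq\|\sigma_m\|_\infty+K_1T^{1/8}.
$$
Integrating in $s$ over $[0,T]$ yields a bound uniform in $m$.

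\textbf{Condition \eqref{a.3}.} Split the double integral over $[0,T]^2$ into three regions. \emph{Both $s,s'\in[r,t]$:} Proposition \ref{prop2}(\textit{iii}) gives
$$
\mathbf E\int_U|D_sF_m-D_{s'}F_m|^2dx\leq C\bigl(|s-s'|^{1/8}+|s-s'|^{\gamma/2}\bigr),
$$
so the integrand is bounded by $C|s-s'|^{-1-2\beta+\min(1/8,\gamma/2)}$, which is integrable once $2\beta<\min(1/8,\gamma/2)$. \emph{Both $s,s'$ outside $[r,t]$:} the integrand vanishes. \emph{Mixed region}, say $s\in[r,t]$ and $s'\notin[r,t]$: here the difference equals $D_sF_m$ itself, which is bounded in $L^2(U\times\Omega)$ by the bound used for \eqref{a.2}. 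The remaining integral is
$$
\int_r^t\int_{[0,T]\setminus[r,t]}|s-s'|^{-1-2\beta}ds'\,ds\leq C\int_r^t\bigl[(s-r)^{-2\beta}+(t-s)^{-2\beta}\bigr]ds<\infty,
$$
which is finite for $\beta<1/2$.

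\textbf{Main obstacle.} This mixed region is the step I expect to be the main difficulty: the jump of $D_sX$ at $s=r$ and $s=t$ is harmless only because $\beta$ is small. The regime $s\in[r,t]$ also requires care: I must make sure that Proposition \ref{prop2}(\textit{iii}) holds for the process started at time $r$, with the same constants. I would check this by rerunning its proof with $X_{r,\cdot}$ in place of $X_{0,\cdot}$; Proposition \ref{contr_prop}, Corollary \ref{cor1} and ($\mathbf H$) are insensitive to the shift of the initial time. With all three conditions verified, Lemma \ref{lem_compact} applies.
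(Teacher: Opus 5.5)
Your proposal is correct and follows essentially the same route as the paper's proof in Appendix \ref{claim_com_proof}. Both arguments apply Proposition \ref{prop2} with $p=2$ at initial time $r$, with constants independent of $m$, $y$ and $r$; both pass from $L^4(U,L^2(\Omega))$ to $L^2(U\times\Omega)$ by Cauchy--Schwarz, use that $D_sF_m=0$ for $s\notin[r,t]$, and split \eqref{a.3} into the same cases with $\beta<\frac{1}{16}\wedge\frac{\gamma}{4}$ (and $\beta<\frac12$), while your use of \eqref{est.4} and \eqref{est.5} in \eqref{a.1} is just a repackaging of the paper's direct appeal to It\^{o}'s isometry and Proposition \ref{prop2}(\textit{i}).
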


For the reader's convenience, we included the proof of Claim \ref{claim_com} in Appendix \ref{claim_com_proof}.

Claim \ref{claim_com} and Lemma \ref{lem_compact} yield that, for every fixed $(s,t) \in \mathbb Q^2 \cap \Delta_2(T)$ and every integer $N \geq 1$, the sequence
$$
B_N\times\Omega\ni(x,\omega)
\mapsto X_{s,t}^{x,m}(\omega), \quad m=1,2,\dots
$$
has a convergent subsequence in $L^2(B_N(0)\times\Omega)$ (i.e.\,we cover ball $B_N(0)$ by a finite number of balls of radius $1$ and pass to a convergent subsequence finitely many times).
Further, since $\mathbb Q^2 \cap \Delta_2(T)$ is countable, a diagonal argument yields a subsequence of $\{X_{s,t}^{\cdot,m}\}_{m \geq 1}$ (without loss of generality, still denoted by $\{X_{s,t}^{\cdot,m}\}$), and random fields
$
Y_{s,t}\in L^2_{\mathrm{loc}}(\mathbb R^d\times\Omega)$, $(s,t)\in \mathbb Q^2 \cap \Delta_2(T)$
such that, for every $N$ and all $(s,t)\in \mathbb Q^2 \cap \Delta_2(T)$
$$
X_{s,t}^{\cdot,m}\rightarrow Y_{s,t}
\quad\text{in }L^2(B_N\times\Omega) \text{ as } m \rightarrow \infty.
$$
Passing to a further subsequence, we may additionally assume that
$$
\sum_{m=1}^{\infty}\|
X_{s,t}^{\cdot,m}-Y_{s,t}\|_{L^2(B_N\times\Omega)}^2<\infty, \quad (s,t) \in \mathbb Q^2 \cap \Delta_2(T),\quad N \geq 1.
$$
It follows that there exists a Borel measurable set $G \subset \mathbb R^d$ such that its complement $\mathbb R^d - G$ is of measure zero and such that  
\begin{equation}
\sum_{m=1}^{\infty}
\mathbf E \left|X_{s,t}^{x,m}-Y_{s,t}(x)\right|^2 <\infty, \quad x \in G.
\label{70b}
\end{equation}
Fix a countable subset $D \subset G$ that is dense in $\mathbb R^d$.
Then there exists $\Omega_0 \subset \Omega$, $\mathbf P(\Omega_0)=1$, such that
\begin{equation}
X_{s,t}^{x,m}\rightarrow Y_{s,t}(x) \quad \text{ on $\Omega_0$ for all } x \in D,\;(s,t)\in \mathbb Q^2 \cap \Delta_2(T). 
\label{70c}
\end{equation}
Now, define the sought limiting process, so far on a countable dense set of times and starting points:
$$
X_{s,t}^x:=Y_{s,t}(x) \quad \text{ for } (s,t)\in \mathbb Q^2 \cap \Delta_2(T),\quad s<t,\quad x\in D, \qquad \text{ and }
X_{s,s}^x:=x.
$$

Clearly, \eqref{70b} yields convergence $X_{s,t}^{x,m} \rightarrow X_{s,t}^{x}$ in $L^2(\Omega)$ for all $(s,t)\in \mathbb Q^2 \cap \Delta_2(T)$. Interpolating with \eqref{est.4}, one has 
$$
X_{s,t}^{x,m} \rightarrow X_{s,t}^x\quad \text{ in } L^r(\Omega) \quad \forall\,r \geq 2.
$$ 
Next, fix an even integer
$r>3d+4$ (this will be needed at the next step). Applying the previous convergence in \eqref{ineq1}, we obtain
\begin{equation}
\label{limit_est}
\mathbf E\left|X_{s_1,t_1}^{x}-X_{s_2,t_2}^{y}\right|^r
\leq C_{1,r}\big(|t_2-t_1|^{\frac{r}{2}}+|x-y|^{r-d}+|x-y|^r +|s_1-s_2|^{\frac{r-d}{2}}
\big)
\end{equation}
for all
$
(s_i,t_i)\in \mathbb Q^2 \cap \Delta_2(T)$, $x,y\in D.
$

\medskip

\textbf{Step 5.}~Since $\frac{r-d}{2}>d+2$ ($=$ dimension of $\Delta_2(T) \times \mathbb R^d$), the Kolmogorov-Chentsov theorem (or, rather, its version due to Kunita \cite[Theorem 1.4.1]{Ku}, see Appendix \ref{kunita_app}) allows us to extend $X_{s,t}^x$ to a continuous random field on $\Delta_2(T) \times \mathbb R^d$, which we still denote by $X_{s,t}^x$.
Let us show that, after passing to a subsequence of $\{X^m\}$ (as usual, re-denoted as $\{X^m\}$)
\begin{equation}
\label{X_sm_conv}
X^{m} \rightarrow X\quad \mathbf P\text{-a.s. locally uniformly on }\Delta_2(T)\times\mathbb R^d.
\end{equation}

The rest of Step 5 is the proof of \eqref{X_sm_conv}. 

We will need to introduce a few notations that will save us some space later.

Set $K_N:=\Delta_2(T)\times\bar{B}_N(0)$, $N \geq 1$.

Define the distance between the time-space parameters $\xi=(s,t,x)$, $\eta=(s',t',y) \in \Delta_2(T) \times \mathbb R^d$ by 
$$
d(\xi,\eta):=|s-s'|+|t-t'|+|x-y|.
$$

Using this distance, we define the modulus of continuity of the approximating random field $X^m$ on $K_N$ by
$$
\omega_{m,N}(\delta)
:=\sup_{\xi,\eta\in K_N, d(\xi,\eta)\leq\delta}
|X^m(\xi)-X^m(\eta)|,
$$
where, in the rest of Step 5, we write
$$
X^m(s,t,x):=X_{s,t}^{x,m}.
$$
We divide the rest of Step 5 into a few parts. In the first part we deal with random fields $X^m$ and in the second part with $X$.

1.~Inequality \eqref{ineq1} implies that, whenever $\xi,\eta\in K_N$ and $d(\xi,\eta)\leq 1$,
$$
\sup_m \mathbf E|X^m(\xi)-X^m(\eta)|^r
\leq
C_{1,r} d(\xi,\eta)^{\frac{r-d}{2}}.
$$
This inequality and $\sup_{\xi \in K_N}\sup_m\mathbf{E}|X^m(\xi)|^r<\infty$, which follows from \eqref{est.4}, allow us to apply the Kolmogorov–Chentsov theorem of \cite[Theorem 1.4.7]{Ku} (Appendix \ref{kunita_app}) that yields the tightness of the laws of $X^m|_{K_N}$ in $C(K_N)$.
In turn, this yields, for every $\varepsilon>0$,
\begin{equation}
\label{delta_conv}
\lim_{\delta \downarrow 0}\sup_m \mathbf{P}[\omega_{m,N}(\delta)>\varepsilon]=0
\end{equation}
for every $\varepsilon>0$ and every $N$.
Indeed, the tightness of $X^m|_{K_N}$ implies that, for any small $\alpha>0$, there is a compact set $M \subset C(K_N)$ such that the probability that a realization of $X^m|_{K_N}$ is in $M$
$$
\mathbf{P}[X^m|_{K_N} \in M] \geq 1-\alpha.
$$
Since $M$ must be equicontinuous, there exists $\delta>0$ such that $\sup_{f \in M}\sup_{\xi,\eta\in K_N, d(\xi,\eta)\leq\delta}|f(\xi)-f(\eta)| \leq \varepsilon$, and so
$$
\mathbf{P}\left[\sup_{\xi,\eta\in K_N, d(\xi,\eta)\leq\delta}|X^m(\xi)-X^m(\eta)| \leq \varepsilon\right] \geq \mathbf{P}[X^m|_{K_N} \in M] \geq 1-\alpha.
$$
It follows that $\mathbf{P}[\sup_{\xi,\eta\in K_N, d(\xi,\eta)\leq\delta}|X^m(\xi)-X^m(\eta)| > \varepsilon] \leq \alpha$.
Since $\alpha$ could be chosen arbitrarily small, we obtain \eqref{delta_conv}.

2.~Define the modulus of continuity of the limiting field $X$ by
$$
\omega_N(\delta):=\sup_{\xi,\eta\in K_N, d(\xi,\eta)\leq \delta}|X(\xi)-X(\eta)|.
$$
Since $X$ is already a.s.\,continuous on compact $K_N$, we have
\begin{equation}
\label{omega_conv}
\omega_N(\delta) \rightarrow 0 \quad \mathbf P\text{-a.s. as }\delta \downarrow 0.
\end{equation}

3.~Fix some $\delta>0$ and put for brevity $\mathbb D:=(\mathbb Q^2 \cap \Delta_2(T)) \times D$. Note that since $\mathbb D \cap K_N$ is dense in $K_N$, there exists a finite $\delta$-net $\Gamma_{N,\delta}\subset \mathbb D\cap K_N$.
So, for every $\xi \in K_N$ there exists $q(\xi) \in\Gamma_{N,\delta}$ such that
$$
d(\xi,q(\xi)) \leq \delta.
$$
Now we estimate
$$
|X^m(\xi)-X(\xi)| \leq |X^m(\xi)-X^m(q(\xi))| + |X^m(q(\xi))-X(q(\xi))| + |X(q(\xi))-X(\xi)|.
$$
Taking the maximum over $\xi\in K_N$, we further obtain
$$
\|X^m-X\|_{C(K_N)} \leq \omega_{m,N}(\delta) + \max_{q\in\Gamma_{N, \delta}} |X^m(q)-X(q)| + \omega_N(\delta).
$$
Hence
$$
\mathbf P\bigl[\|X^m-X\|_{C(K_N)}>3\varepsilon\bigr] \leq \mathbf P\bigl[\omega_{m,N}(\delta)>\varepsilon\bigr] +
\mathbf P[\max_{q\in\Gamma_{N,\delta}}|X^m(q)-X(q)|>\varepsilon]+\mathbf P\bigl[\omega_N(\delta)>\varepsilon\bigr].
$$
For a fixed $\delta>0$, the second term on the right tends to zero as $m \rightarrow \infty$ since the $\delta$-net $\Gamma_{N,\delta}$ is finite and $X^m(q)\to X(q)$ almost surely for every $q\in \mathbb D$. Hence, by \eqref{delta_conv},
$$
\limsup_{m\to\infty}\mathbf P\bigl[
\|X^m-X\|_{C(K_N)}>3\varepsilon\bigr] \leq \sup_m \mathbf P\bigl[\omega_{m,N}(\delta)>\varepsilon\bigr] + \mathbf P\bigl[\omega_N(\delta)>\varepsilon\bigr].
$$
Taking $\delta \downarrow 0$ and using \eqref{delta_conv} and \eqref{omega_conv}, we arrive at
\begin{equation}
\label{Xm_X}
X^m \rightarrow X \quad\text{in probability in }C(K_N), \quad \text{ for every } N \geq 1.
\end{equation}

4.~We finally extract a subsequence that converges a.s.\,on every $K_N$. By \eqref{Xm_X}, passing to a subsequence,
$$
\mathbf P\left[
\|X^{m_j}-X\|_{C(K_j)}>2^{-j}\right]
\leq 2^{-j}.
$$
By the Borel-Cantelli lemma, on an event of probability one,
$
\|X^{m_j}-X\|_{C(K_j)} \leq 2^{-j}
$
starting with some $j$. Hence, for every fixed $N$,
$$
\sup_{(s,t) \in \Delta_2(T), |x|\leq N} \left|X_{s,t}^{x,m_j}-X_{s,t}^x\right| \rightarrow 0 \quad\mathbf P\text{-a.s.}
$$
Thus, after passing to a subsequence, we arrive at \eqref{X_sm_conv}.

\medskip

\textbf{Step 6.} Next, we show that $X_{s,t}^x$, $x \in \mathbb R^d$, is a family of strong solutions (Definition \ref{strong_def}) to SDE
\begin{equation}
\label{sde2sigma_limit}
X_{s,t}^{x}=x+  \int_s^t \sigma(r,X^{x}_{s,r}) \, dW_r, \quad s \leq t \leq T
\end{equation}
In view of \eqref{X_sm_conv}, and since $X^x_{s,t}$ are by construction  adapted to the increment filtration of the Brownian motion, to end the proof it suffices for us to show that
\begin{equation}
\label{limit_sigma}
\int_s^t \sigma_m(\tau,X_{s,\tau}^{x,m}) \, dW_\tau \rightarrow \int_s^t \sigma(\tau,X_{s,\tau}^{x}) \, dW_\tau \quad \text{ in } L^2(\Omega,[C[s,T]]^d).
\end{equation}
Indeed, using the BDG inequality, we have
\begin{align}
\mathbf{E}\sup_{s \leq t \leq T}\biggl|\int_s^t &\sigma_m(\tau,X_{s,\tau}^{x,m})dW_\tau -\int_s^t \sigma(\tau,X_{s,\tau}^{x})dW_\tau \biggr|^2 \notag \\
&  \leq 2\mathbf{E}\sup_{s \leq t \leq T}\biggl|\int_s^t (\sigma_m-\sigma_k)(\tau,X_{s,\tau}^{x,m})dW_\tau\biggr|^2 \\
&+ 2\mathbf{E}\sup_{s \leq t \leq T}\biggl|\int_s^t \sigma_k(\tau,X_{s,\tau}^{x,m})dW_\tau - \int_s^t \sigma_k(\tau,X_{s,\tau}^{x})dW_\tau\biggr|^2 \notag \\
& + 2\mathbf{E}\sup_{s \leq t \leq T}\biggl|\int_s^t (\sigma_k-\sigma)(\tau,X_{s,\tau}^{x})dW_\tau\biggr|^2 \notag \\
& \leq  2C\mathbf{E}\int_s^T |\sigma_m-\sigma_k |^2(\tau,X_{s,\tau}^{x,m})d\tau  +  2C\mathbf{E}\int_s^T \biggl| \sigma_k(\tau,X_{s,\tau}^{x,m}) -\sigma_k(\tau,X_{s,\tau}^{x})\biggr |^2  d\tau \label{exp_bd}\\
&+ 2C\mathbf{E}\int_s^T |\sigma_k-\sigma|^2 (\tau,X_{s,\tau}^{x})d\tau. \label{exp_bd2}
\end{align}
We now apply the following consequence of Proposition \ref{prop_grad}. In what follows, without loss of generality, $x \in B_{10}(0)$.

\begin{lemma}
\label{lem_conv}
Define occupation measures: for a Borel measurable set $A \subset [s,t] \times \mathbb R^d$, set
$$
\mu_m^{s,x}(A):=\mathbf{E}\int_s^t \mathbf{1}_A(r,X_{s,r}^{x,m})dr, \quad m=1,2,\dots
$$
and
$$
\mu^{s,x}(A):=\mathbf{E}\int_s^t \mathbf{1}_A(r,X_{s,r}^{x})dr.
$$
The following are true:
\begin{itemize}
\item[{\rm (\textit{i})}] For every function $0 \leq F \in C_c([s,t] \times \mathbb R^d) \cap \mathbf{F}_\mu$,
\begin{align*}
\int F d\mu_m^{s,x} & = \mathbf{E}\int_s^t F(r,X_{s,r}^{x,m})dr \\
& \rightarrow \int_s^t \int_{\mathbb R^d} F d\mu^{s,x} = \mathbf{E}\int_s^t F(r,X_{s,r}^{x})dr \text{ as } m \rightarrow \infty
\end{align*}
and
\begin{equation}
\label{est_7}
\mathbf{E}\int_s^t F(r,X_{s,r}^{m,x})dr,\;\; \mathbf{E}\int_s^t F(r,X_{s,r}^{x})dr \leq C \|F\|_{L^2([s,t], L^2_{\rho}(\mathbb R^d))}^{\frac{2}{q}},
\end{equation}
where constant $C$ depends on $\mu$, but does not depend on the smoothness or the support of $F$.

\item[{\rm (\textit{ii})}] The limiting occupation measure $\mu^{s,x}$ is absolutely continuous with respect to the weighted Lebesgue measure $dr \times \rho(x)dx$ on $[s,t] \times \mathbb R^d$.

\smallskip

\item[{\rm (\textit{iii})}] For every function $0 \leq F \in L^2([s,t] \times L^2_{\rho}(\mathbb R^d)) \cap \mathbf{F}_\mu$,
$$
\mathbf{E}\int_s^t F(r,X_{s,r}^{x})dr \leq C \|F\|_{L^2([s,t], L^2_{\rho}(\mathbb R^d))}^{\frac{2}{q}}.
$$ 
\end{itemize}

\end{lemma}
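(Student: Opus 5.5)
The plan is to derive the Krylov-type bound \eqref{est_7} for the approximating processes from Proposition \ref{prop_grad}, then pass to the limit using the a.s.\ locally uniform convergence \eqref{X_sm_conv}. Finally, (\textit{ii}) and (\textit{iii}) will follow by duality and monotone approximation.

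\textbf{Step (a): the bound for $X^m$.} Fix $0\le F\in C_c([s,t]\times\mathbb R^d)\cap\mathbf F_\mu$ and let $v_m$ solve \eqref{Cauchy3} on $[s,t]$ with $a_m=\sigma_m\sigma_m^\top$, right-hand side $-F$ (replacing $F$ by a mollification if needed) and terminal datum $f=0$. By It\^{o}'s formula applied to $v_m(r,X^{x,m}_{s,r})$, the martingale part has zero expectation, so
$$\mathbf E\int_s^tF(r,X^{x,m}_{s,r})\,dr=v_m(s,x).$$
Then \eqref{v_est_unif}, valid uniformly in $m$ since the $\sigma_m$ satisfy \eqref{nabla_sigma} uniformly, gives for $x\in B_{10}(0)$
$$v_m(s,x)\le C\|F\|_{L^2([s,t],L^2_\rho)}^{2/q}.$$
The same argument with $\rho_y$ handles other balls.

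\textbf{Step (b): convergence for continuous $F$.} By \eqref{X_sm_conv}, $X^{x,m}_{s,r}\to X^x_{s,r}$ a.s.\ uniformly in $r$. Since $F$ is bounded and continuous, dominated convergence gives $\int F\,d\mu^{s,x}_m\to\int F\,d\mu^{s,x}$. Passing to the limit in (a) then gives \eqref{est_7} for $X^x$ itself, with the same constant. This requires no smoothness of $F$ beyond continuity, because the constant in (a) is independent of smoothness and support. So the only delicacy is checking that a mollification of $F$ stays in $\mathbf F_\mu$ with a controlled constant, and this holds since form-boundedness is preserved under convolution with a probability kernel.

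\textbf{Step (c): absolute continuity and general $F$.} Let $A$ be a compact set of zero $dr\times\rho dx$ measure. Choose continuous $0\le F_k\le 1$ with compact support, $F_k\downarrow\mathbf 1_A$, and $\|F_k\|_{L^2_\rho}\to0$. Bounded functions with compact support lie in $\mathbf F_\mu$ for any $\mu>0$, with $c_\mu$ depending on the sup norm. So \eqref{est_7} applies with uniform constants and gives $\mu^{s,x}(A)\le\lim_k C\|F_k\|^{2/q}=0$. Inner regularity extends this to all null Borel sets, which proves (\textit{ii}).

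For (\textit{iii}), truncate and cut off $F$ to $F_k=(F\wedge k)\mathbf 1_{B_k}$, approximate each $F_k$ by continuous functions in $L^2_\rho$, and use (\textit{ii}) to identify the limits $\mu^{s,x}$-a.e. Then conclude by monotone convergence.

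\textbf{Main obstacle.} The hardest point is keeping the constant uniform under these approximations: it depends on $\mu$ and $c_\mu$, and truncations $F\wedge k$ of a form-bounded $F$ remain in $\mathbf F_\mu$ with the same $\mu$ and $c_\mu$ because $0\le F\wedge k\le F$. I would first verify that the approximating functions can be chosen pointwise dominated by $F$, or by a mollification of $F$, so that this comparison applies.
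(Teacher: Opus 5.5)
Your proposal is correct and follows essentially the same route as the paper. The ingredients match: It\^{o}'s formula with \eqref{v_est_unif} (taking $f=0$) for the approximating processes, passage to the limit via \eqref{X_sm_conv} and dominated convergence, continuous cutoffs of a compact null set plus inner regularity for (\textit{ii}), and approximation of $F$ identified $\mu^{s,x}$-a.e.\ through (\textit{ii}) for (\textit{iii}).

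The only cosmetic difference is in (\textit{iii}). The paper uses the single sequence $\eta_{\varepsilon_n}\ast(\mathbf 1_n F)$ and Fatou's lemma, whereas you truncate first, approximate each truncation, and finish with monotone convergence. Your explicit checks that bounded compactly supported functions and mollifications stay in $\mathbf F_\mu$ with uniform constants fill in details the paper leaves implicit.
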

\begin{proof}
(\textit{i}) The first estimate in \eqref{est_7} is an immediate consequence of Proposition \ref{prop_grad}, i.e.\,\eqref{v_est_unif}, that we apply (on the interval $[s,t]$ rather than $[0,T]$) with initial function $f=0$. The convergence in (\textit{i}) follows from \eqref{X_sm_conv} via the Dominated convergence theorem. This convergence transforms the first estimate into the second estimate.

(\textit{ii}) Our goal is to show that $\mu^{s,x} \ll dr \times \rho(x)dx$. Let $K \subset [s,t] \times \mathbb R^d$ be a compact set of $dr \times \rho(x)dx$-measure zero. There exists a decreasing sequence of open sets $U_k \supset K$ such that $(dr \times \rho(x)dx)(U_k) \downarrow 0$. By Urysohn's theorem, there exists a sequence of continuous functions $\zeta_k$ having $\sprt \zeta_k \subset U_k$ and identically equal to $1$ on $K$. Now, we estimate:
\begin{align*}
\mu^{s,x}(K) & \leq \int \zeta_k d\mu^{s,x} \\
& \leq C\|\zeta_k\|^{\frac{2}{q}}_{L^2([s,t],L^2_{\rho}(\mathbb R^d))} \downarrow 0 \quad \text{ as } k \rightarrow \infty.
\end{align*}
Hence, $\mu^{s,x}(K)=0$. Since $\mu^{s,x}$ is a Radon measure, by the regularity $\mu^{s,x}(A)=0$ for all $dr \times \rho(x)dx$ measure zero $A$.

(\textit{iii}) We use (\textit{ii}) to pass from the second estimate in (\textit{i}) to (\textit{iii}). Let $F_n:=\eta^{\mathbb R^{1+d}}_{\varepsilon_n} \ast \mathbf{1}_n F$, where $\eta_{\epsilon_n}$ is a mollifier on $\mathbb R^{1+d}$ with $\epsilon_n \downarrow 0$ and $\mathbf{1}_n:=\{(r,x) \in [s,t] \times \mathbb R^d \mid |x| \leq n, F(r,x) \leq n\}$. Then $F_n \in \mathbf{F}_\mu$ with the same $c_\mu$ as $F$, see Appendix \ref{Conv_b_n} where we show that, selecting $\varepsilon_n \downarrow 0$ sufficiently fast, we have
$$
\|F_n-F\|_{L^2([s,t],L^2_{\rho}(\mathbb R^d))} \rightarrow 0,
$$
hence there exists a subsequence (without loss of generality, $\{F_n\}$ itself) such that 
$$
F_n \rightarrow F \quad \text{$dr \times \rho(x)dx$-a.e.\,on $[s,t] \times \mathbb R^d$.}
$$
By (\textit{ii}), 
$$
F_n \rightarrow F  \quad \text{$\mu^{s,x}$-a.e.\,on $[s,t] \times \mathbb R^d$}.
$$
We can apply Fatou's lemma in $n \rightarrow \infty$ in the second estimate in (\textit{ii}) applied to $F_n$ to obtain (\textit{iii}).
\end{proof}

Let us now use Lemma \ref{lem_conv} to send \eqref{exp_bd}, \eqref{exp_bd2} to zero, as $m \rightarrow \infty$, by selecting $k$ appropriately. The first term in 
the first term in \eqref{exp_bd} is bounded, upon selecting $F:=|\sigma_m-\sigma_k|^2$ in Lemma \ref{lem_conv}, as follows:
$$
\mathbf{E}\int_s^T |\sigma_m-\sigma_k |^2(\tau,X_{s,\tau}^{x,m})d\tau \leq C\|\sigma_m-\sigma_k\|^2_{L^2([0,T],L^2_\rho(\mathbb R^d))}.
$$ 
The right-hand side can be made smaller than any given $\varepsilon$, assuming that $m \geq m_\varepsilon$, by fixing $k$ sufficiently large. The same holds, upon selecting $F:=|\sigma_k-\sigma |$ in Lemma \ref{lem_conv}, for \eqref{exp_bd2} -- select $k$ even larger, if needed, to make \eqref{exp_bd2} smaller than given $\varepsilon$. Finally, for thus fixed $k$, the second term in \eqref{exp_bd} goes to zero as $m \rightarrow \infty$ by the Dominated convergence theorem. \hfill \qed

\begin{remark}
At the last step we could appeal instead to Krylov's  occupation-time estimate. See Section \ref{proof_struct_sect} for the discussion.
\end{remark}

\bigskip

\section{Proof of Proposition \ref{prop_grad}}
\label{prop_grad_proof}

We carry out the proof, which is a rather direct combination of the proofs of the analogous results in \cite{Ki_Osaka} and \cite{KiS_Osaka}, taking into account the remark on $(\nabla_r a_{il})_{i=1}^{d}$ and $\nabla a$ in the beginning of Section \ref{contr_grad_sect}. Without loss of generality, we assume throughout the proof that the ellipticity constant $\kappa=1$.

1.~We first consider the case of the right-hand side $F=0$ and the constant weight $\rho = 1$. We will explain in the end what needs to be modified in the proof to include non-zero $F$ with $\rho$ vanishing polynomially at infinity.

We reverse time and carry out the proof for solutions of the corresponding initial-value problem that we write in the divergence form:
\begin{align}
\label{CP0}
\left\{
\begin{array}{l}
\partial_t u -  \frac{1}{2} \nabla \cdot a \cdot \nabla +  \frac{1}{2}  \nabla a \cdot \nabla u=0 \qquad \text{ in } ]0,T] \times \mathbb R^d, \\[2mm]
 u|_{t=s}=f,
\end{array}
\right.
\end{align}
where $0 \leq s<T$. Here
$a \mapsto \nabla a$ is the row-divergence operator defined by
$$(\nabla a)_{k}:=\sum_{i=1}^{d} ( \nabla_{i} a_{ik}), \quad k=1,\dots,d.$$

Below we will first select $s$ close to $T$, but then will remove this constraint and will be able to take $s=0$.

Set
$$w_{r} := \nabla_{r}u, \quad w:=(w_r)_{r=1}^d \equiv \nabla u,$$ 
\[
I_q := \sum_{r=1}^{d} \int_s^\tau \langle (\nabla_r w)^{2}|w|^{q-2} \rangle dt , \quad J_q := \int_s^\tau \langle \big|\nabla |w|\big|^{2}|w|^{q-2} \rangle dt,
\]
\[
I_q^a :=\sum_{r=1}^{d} \int_s^\tau \langle (\nabla_r w \cdot a \cdot \nabla_r w)|w|^{q-2} \rangle dt , \quad J_q^a := \int_s^\tau \langle (\nabla |w| \cdot a \cdot \nabla |w|) |w|^{q-2} \rangle dt.
\]
Define the commutator:
$$[F, G] :=FG - GF.$$ 
Set $A := -\nabla \cdot a \cdot \nabla $.

We will be using the test function due to Kovalenko-Sem\"{e}nov \cite{KS}:
\[
\phi_r := -\partial_{r} (w_r |w|^{q-2}).
\]

\begin{remark}
\label{test_func_rem}
The use of this test function allows for a fine control of the admissible form-bounds. There are alternative rougher arguments that involve differentiating the equation and integrating by parts in a more straightforward way, see \cite{Ki_survey}. One challenging part of the proof given below -- not handled in the elliptic setting of \cite{KS} -- is dealing with the time derivative of the solution; we deal with it arguing as in \cite{Ki_Osaka}.
\end{remark}

That is, we multiply the parabolic equation in \eqref{CP0} by $\phi_r$, integrate in space and time (from the initial time $s$ up to time $\tau$), and then sum over $r$ to get :
\begin{align*}
\frac{1}{q}\langle |w(\tau)|^{q} \rangle -\frac{1}{q}\langle |\nabla f |^{q} \rangle & +\frac{1}{2} \int_s^\tau \langle Aw, w|w|^{q-2} \rangle dt  =\\
 & - \frac{1}{2}  \sum_{r=1}^{d}  \int_s^\tau \langle [\nabla_r, A]u, w_r|w|^{q-2} \rangle dt    - \frac{1}{2}  \sum_{r=1}^{d}\int_s^\tau \langle (\nabla a)\cdot w, \phi_r  \rangle dt,
\end{align*}
where we have used $\sum_{r=1}^d \langle A \nabla_r u, w_r|w|^{q-2} \rangle dt=\int_s^\tau \langle Aw, w|w|^{q-2} \rangle dt$, with $A$ applied to vector field $w$ componentwise.
We evaluate:
\[
\int_s^\tau \langle Aw, w|w|^{q-2} \rangle dt  = I_q^a  + (q-2)J_q^a
\]
Since $a \geq I$ (i.e.\,the ellipticity condition with $\kappa=1$), and so $I_q^a \geq I_q$ and $J_q^a \geq  J_q$, we obtain inequality
\begin{align}
\label{iq2}
\frac{1}{q}\langle |w(\tau)|^{q} \rangle - \frac{1}{q}\langle |\nabla f |^{q} \rangle & +\frac{1}{2}  I_q  + \frac{(q-2)}{2} J_q \notag \\
& \leq - \frac{1}{2}  \sum_{r=1}^{d}  \int_s^\tau \langle [\nabla_r, A]u, w_r|w|^{q-2} \rangle dt - \frac{1}{2} \sum_{r=1}^{d}\int_s^\tau \langle (\nabla a)\cdot w, \phi_r  \rangle dt.
\end{align}
Thus, our goal is to estimate the two terms in the right-hand side of \eqref{iq2} in terms of $I_q$ and $J_q$. This will be done in the next two claims.

\begin{claim}
\label{Claim1}
\begin{align*}
\left|\sum_{r=1}^{d}  \int_s^\tau \langle [\nabla_r, A]u, w_r|w|^{q-2} \rangle dt  \right| & \leq M_{1}I_{q} +  N_{1}J_{q}+ C_1\sum_{r, l=1}^{d} \int_s^\tau g_{\gamma_{rl}}(t)\langle |w|^q \rangle dt, 
\end{align*}
where 
\[
M_{1} :=  \frac{1}{4\alpha_1}, \qquad N_{1}:= \alpha_1 \gamma \frac{q^{2}}{4}  + 
 (q-2)\bigg(\alpha_2 \gamma \frac{q^{2}}{4} + \frac{1}{4\alpha_2} \bigg), \quad C_1:=\big( \alpha_1+(q-2)\alpha_2\big),
\]
where $\alpha_1,\alpha_2>0$ are arbitrary, and 
$$
\gamma := \sum_{r, l= 1}^{d}\gamma_{rl}.
$$
\end{claim}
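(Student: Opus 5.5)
We compute the commutator explicitly. Since $A=-\nabla\cdot a\cdot\nabla = -\sum_{i,l}\nabla_i a_{il}\nabla_l$, we have
$$
[\nabla_r,A]u=-\sum_{i,l=1}^d\nabla_i\big((\nabla_r a_{il})\,w_l\big),
$$
so only first derivatives of $a$ appear, inside a divergence. Pairing with $w_r|w|^{q-2}$ and integrating by parts in $x_i$ moves $\nabla_i$ onto the test function:
$$
\sum_r\langle [\nabla_r,A]u, w_r|w|^{q-2}\rangle=\sum_{r,i,l}\big\langle (\nabla_r a_{il})\,w_l,\ \nabla_i w_r\,|w|^{q-2}+(q-2)\,w_r|w|^{q-3}\nabla_i|w|\big\rangle .
$$
This splits the term into two pieces, the first containing $\nabla_i w_r$ (to be controlled by $I_q$) and the second containing $\nabla|w|$, carrying the factor $(q-2)$ (to be controlled by $J_q$). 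This matches the structure of $M_1,N_1,C_1$.

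For the first piece, we apply Cauchy--Schwarz with parameter $\alpha_1$:
$$
|\cdot|\le \frac{1}{4\alpha_1}\sum_r\langle (\nabla_r w)^2|w|^{q-2}\rangle+\alpha_1\sum_{r,l}\big\langle |(\nabla_r a_{il})_{i=1}^d|^2\,|w|^{q}\big\rangle,
$$
where we bound $|w_l|^2\le|w|^2$ and sum the vector fields over $i$. Integrating in time gives $\frac{1}{4\alpha_1}I_q$. For the second term we write $|w|^q=(|w|^{q/2})^2$ and invoke the form-boundedness \eqref{nabla_a}, $(\nabla_r a_{il})_{i}\in\mathbf F_{\gamma_{rl}}$, with test function $|w|^{q/2}$:
$$
\big\langle |(\nabla_r a_{il})_i|^2,|w|^q\big\rangle\le \gamma_{rl}\,\tfrac{q^2}{4}\big\langle|\nabla|w||^2|w|^{q-2}\big\rangle+g_{\gamma_{rl}}\langle|w|^q\rangle .
$$
Here we use $|\nabla|w|^{q/2}|^2=\frac{q^2}{4}|\nabla|w||^2|w|^{q-2}$. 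Summing over $r,l$ produces $\alpha_1\gamma\frac{q^2}{4}J_q+\alpha_1\sum g_{\gamma_{rl}}\langle|w|^q\rangle$.

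For the second piece, $|w_r|\,|w|^{q-3}\le|w|^{q-2}$, so it is bounded by $(q-2)\sum_{r,l}\langle |(\nabla_r a_{il})_i|\,|w|^{q-1}|\nabla|w||\rangle$. We apply Cauchy--Schwarz with parameter $\alpha_2$, obtaining $\frac{1}{4\alpha_2}\langle|\nabla|w||^2|w|^{q-2}\rangle$ plus $\alpha_2\langle|(\nabla_ra_{il})_i|^2|w|^q\rangle$. The latter is treated by the same form-bound as before, giving $(q-2)\big(\alpha_2\gamma\frac{q^2}{4}+\frac1{4\alpha_2}\big)J_q+(q-2)\alpha_2\sum g_{\gamma_{rl}}\langle|w|^q\rangle$. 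Collecting the terms and integrating over $[s,\tau]$ yields the claim with the stated constants.

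The main obstacle is bookkeeping rather than analysis. Two points need care. First, the Cauchy--Schwarz step for the $\frac{1}{4\alpha_2}J_q$ term must be arranged so that the constant is exactly $\frac{1}{4\alpha_2}$ per unit of $(q-2)$ rather than growing with $d^2$; this requires pairing vectorially over $(i,l,r)$ before splitting, rather than bounding entrywise. Second, the integration by parts needs justification, which is fine since $\sigma$ is smooth and $u$ decays, and the test function is legitimate for $q\ge2$ wherever $w\neq0$. If the entrywise splitting introduces dimensional constants, we would absorb them into the definition of $\gamma$, which is harmless since only the smallness of $\gamma$ matters.
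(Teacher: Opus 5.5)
Your proof is correct and follows the paper's argument: the commutator reduces to $-\sum_{i,l}\nabla_i\big((\nabla_r a_{il})w_l\big)$. Integrating by parts splits it into a $\nabla_i w_r$ piece and a $(q-2)\nabla|w|$ piece; your power $|w|^{q-3}$ is the correct one, and the paper's display has a typo $|w|^{q-2}$ there. Cauchy--Schwarz with $\alpha_1,\alpha_2$, together with $(\nabla_r a_{il})_i\in\mathbf{F}_{\gamma_{rl}}$ tested on $|w|^{q/2}$, then gives the stated constants.

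The one caveat is the one you flag yourself. Bounding $|w_r|,|w_l|\le|w|$ entrywise and applying Young to each $(r,l)$ separately produces $\frac{d^2}{4\alpha_2}J_q$. To get exactly $\frac{1}{4\alpha_2}$, use Cauchy--Schwarz jointly in $(r,l)$, $\big|\sum_{r,l}w_rw_l\,x_{rl}\big|\le|w|^2\big(\sum_{r,l}x_{rl}^2\big)^{1/2}$, and Cauchy--Schwarz in $l$ in the first piece. This makes the ``absorb into $\gamma$'' fallback unnecessary, and that fallback would not work as stated anyway, since $\frac{1}{4\alpha_2}$ carries no factor $\gamma$.
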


 Note that  $M_1$ and $N_1$ can be made as small as needed  by assuming that the form-bounds $\delta_a$ and $\gamma_{rl}$'s are sufficiently small (provided $\alpha_1,\alpha_2>0$ are first chosen to be sufficiently large).

\begin{proof}[Proof of Claim \ref{Claim1}]
We have
$
[\nabla_r,A]=\nabla_r \sum_{i,l=1}^d \nabla_i (a_{il} \nabla_l) - \sum_{i,l=1}^d \nabla_i (a_{il} \nabla_l\nabla_r),
$
so, using integration by parts, we evaluate:
\begin{align*}
 \sum_{r=1}^d\int_s^\tau \langle [\nabla_r, A]u, w_r|w|^{q-2} \rangle dt &= \sum_{r,i,l=1}^{d} \int_s^\tau \langle (\nabla_r a_{i l})w_l, (\nabla_i
w_r) |w|^{q-2} \rangle dt \\
&+ (q-2)\sum_{r,i,l=1}^{d} \int_s^\tau \langle (\nabla_r a_{i l})w_l,w_r|w|^{q-2}\nabla_i |w| \rangle dt  \\
&\text{(use Cauchy-Schwarz)}\\
&\leq \alpha_1 \sum_{r, l=1}^{d}\int_s^\tau \langle | \nabla_r a_{\cdot l}|^2 |w|^q \rangle dt + \frac{1}{4\alpha_1} I_q \\
& + (q-2)  \biggl[\alpha_2 \sum_{r, l=1}^{d}\int_s^\tau \langle | \nabla_r a_{\cdot l}|^2 |w|^q \rangle dt + \frac{1}{4\alpha_2} J_q \biggr]. 
\end{align*}
Since $\nabla_r a_{\cdot l} \in \mathbf{F}_{\gamma_{rl}}$, 
\begin{align*}
\sum_{r, l=1}^{d} \int_s^\tau \langle | \nabla_r a_{\cdot l}|^2 |w|^q \rangle dt & \leq \sum_{r, l=1}^{d} \gamma_{rl} \frac{q^{2}}{4} \int_s^\tau \langle (\nabla |w|)^2 |w|^{q-2} \rangle \, dt + \sum_{r, l=1}^{d} c_{\gamma_{rl}}\int_s^\tau \langle |w|^q \rangle dt  \\
& = \gamma \frac{q^{2}}{4}J_q +  \sum_{r, l=1}^{d} c_{\gamma_{rl}}\int_s^\tau \langle |w|^q \rangle dt.
\end{align*}
Thus,
\begin{align*}
\sum_{r, l=1}^{d}  \int_s^\tau \langle [\nabla_r, A]u, w_r|w|^{q-2} \rangle dt &  \leq   \frac{1}{4\alpha_1} I_q  + \biggl[ \alpha_1 \gamma \frac{q^{2}}{4}  + 
 (q-2)\big(\alpha_2 \gamma \frac{q^{2}}{4} + \frac{1}{4\alpha_2} \big) \biggr] J_q \\
& + \big( \alpha_1+(q-2)\alpha_2\big) \sum_{r, l=1}^{d} c_{\gamma_{rl}}\int_s^\tau \langle |w|^q \rangle dt,
\end{align*}
which is the claimed inequality.
\end{proof}

\begin{claim}
\label{Claim2}
\[
\biggl| \sum_{r=1}^{d}\int_s^\tau \langle (\nabla a)\cdot w, \phi_r  \rangle dt \bigg|  \leq M_2 I_q + N_2 J_q+ C_2 c_{\delta_a} \int_s^\tau \langle |w|^{q} \rangle dt,
\]
where
\[
M_2=\frac{\nu_1}{4}\frac{4\varkappa}{4\varkappa-q+2}\frac{q}{2}  , \qquad N_2=\frac{\nu_1}{4}\frac{4\varkappa}{4\varkappa-q+2}(q-2)\bigg(\varkappa+\frac{1}{2}\bigg)+\frac{1}{\nu_1}\frac{q^2}{4}\delta_a + (q-2) \biggl[ \nu_2 
+ 
\frac{\delta_a}{4\nu_2} \frac{q^2}{4}\biggr], 
\]
\[
C_2=\frac{1}{\nu_1} + \frac{q-2}{4\nu_2},
\]
where $\varkappa>\frac{q-2}{4}$ and $\nu_1$, $\nu_2>0$ are arbitrary. 
\end{claim}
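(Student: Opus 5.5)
We need to bound $\sum_r\int_s^\tau\langle(\nabla a)\cdot w,\phi_r\rangle dt$ with $\phi_r=-\partial_r(w_r|w|^{q-2})$. The plan is to expand $\phi_r$ explicitly. Writing $\phi_r=-(\nabla_r w_r)|w|^{q-2}-(q-2)w_r|w|^{q-3}\nabla_r|w|$, the term splits into
$$
S_1:=-\int_s^\tau\langle(\nabla a)\cdot w,\,({\rm div}\,w)|w|^{q-2}\rangle dt,\qquad S_2:=-(q-2)\int_s^\tau\langle(\nabla a)\cdot w,\,|w|^{q-3}\,w\cdot\nabla|w|\rangle dt .
$$
The second piece is straightforward. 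Using $|w\cdot\nabla|w||\le|w||\nabla|w||$ and Cauchy--Schwarz with parameter $\nu_2$,
$$
|S_2|\le(q-2)\Big[\nu_2J_q+\tfrac{1}{4\nu_2}\int_s^\tau\langle|\nabla a|^2|w|^q\rangle dt\Big].
$$
Then we apply $\nabla a\in\mathbf F_{\delta_a}$ to the test function $|w|^{q/2}$, for which $|\nabla|w|^{q/2}|^2=\frac{q^2}{4}|\nabla|w||^2|w|^{q-2}$. This gives $\int\langle|\nabla a|^2|w|^q\rangle\le\delta_a\frac{q^2}{4}J_q+c_{\delta_a}\int\langle|w|^q\rangle$, which produces the $(q-2)[\nu_2+\frac{\delta_a}{4\nu_2}\frac{q^2}{4}]J_q$ part of $N_2$ and the $\frac{q-2}{4\nu_2}c_{\delta_a}$ part of $C_2$.

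The first piece $S_1$ carries the divergence ${\rm div}\,w=\Delta u$, which is not directly controlled by $I_q$ in the needed sharp form. We write
$$
|S_1|\le\frac{\nu_1}{4}\int_s^\tau\langle({\rm div}\,w)^2|w|^{q-2}\rangle dt+\frac{1}{\nu_1}\int_s^\tau\langle|\nabla a|^2|w|^q\rangle dt ,
$$
and the last term, by the same form-boundedness step, gives $\frac{1}{\nu_1}\frac{q^2}{4}\delta_aJ_q+\frac{1}{\nu_1}c_{\delta_a}\int\langle|w|^q\rangle$.

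The key step, and the main obstacle, is to bound $\int\langle(\Delta u)^2|w|^{q-2}\rangle$ by a combination of $I_q$ and $J_q$ with the precise constants $\frac{4\varkappa}{4\varkappa-q+2}\big(\frac{q}{2}I_q+(q-2)(\varkappa+\frac12)J_q\big)$. Here we follow Kovalenko--Semënov and integrate by parts twice using $w=\nabla u$, which makes $\nabla_iw_r$ symmetric:
$$
\langle(\Delta u)^2|w|^{q-2}\rangle=\sum_{i,r}\langle(\nabla_iw_r)^2|w|^{q-2}\rangle+(q-2)\big[\langle(\nabla_iw_r)w_i\nabla_r|w|\,|w|^{q-3}\rangle-\langle\Delta u\,w\cdot\nabla|w|\,|w|^{q-3}\rangle\big].
$$
Note that $\sum_r w_r\nabla_iw_r=|w|\nabla_i|w|$, so the middle term equals $(q-2)J_q$. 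The last term is bounded by $\frac{1}{4\varkappa}\langle(\Delta u)^2|w|^{q-2}\rangle\cdot(q-2)+\varkappa(q-2)J_q$. Absorbing it into the left-hand side requires $1-\frac{q-2}{4\varkappa}>0$, i.e.\ $\varkappa>\frac{q-2}{4}$, and this absorption produces the factor $\frac{4\varkappa}{4\varkappa-q+2}$.

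What remains is bookkeeping to match the stated $\frac q2I_q$ and $(q-2)(\varkappa+\frac12)J_q$ coefficients. This I expect to come from symmetrizing the cross terms via $I_q$ versus $J_q$, using $\sum_{i,r}(\nabla_iw_r)^2=\sum_r|\nabla_rw|^2$ and $|\nabla|w||\le$ a half-weighted split. Summing the estimates for $S_1$ and $S_2$ then yields the claim with the stated $M_2$, $N_2$ and $C_2$.
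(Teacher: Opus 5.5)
Your proof is correct and follows the paper's argument: split $\phi$ into the $\Delta u$ part and the $w\cdot\nabla|w|^{q-2}$ part, apply Cauchy--Schwarz with $\nu_1,\nu_2$, use form-boundedness of $\nabla a$ with test function $|w|^{q/2}$, and integrate by parts twice in $\langle(\Delta u)^2|w|^{q-2}\rangle$, absorbing via $\varkappa>\frac{q-2}{4}$. The one difference is that you evaluate the middle term exactly as $(q-2)J_q$ using the symmetry of $\nabla w$, whereas the paper only bounds it by $(q-2)(\frac12 I_q+\frac12 J_q)$; the ``bookkeeping'' you left open is then just the elementary inequality $J_q\le I_q$ (from $|\nabla|w||^2\le\sum_r|\nabla_r w|^2$), which turns your sharper bound $\frac{4\varkappa}{4\varkappa-q+2}\bigl(I_q+(q-2)(1+\varkappa)J_q\bigr)$ into the stated $\frac{4\varkappa}{4\varkappa-q+2}\bigl(\frac q2 I_q+(q-2)(\varkappa+\frac12)J_q\bigr)$.
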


Again, we can make $M_2$, $N_2$ arbitrarily small by assuming that the form-bound $\delta_a$ of the vector field $\nabla a$ is sufficiently small (provided that $\nu_1$, $\nu_2$ are first chosen to be sufficiently small).

\begin{proof}[Proof of Claim \ref{Claim2}] Write
$
\sum_{r=1}^{d}\int_s^\tau \langle (\nabla a)\cdot w, \phi_r  \rangle dt=\int_s^\tau \langle (\nabla a)\cdot w, \phi  \rangle dt,
$
where
$$
\phi:=\sum_{r=1}^d \phi_r \equiv -\nabla \cdot (w|w|^{q-2}).
$$
Now, instead of integrating by parts right away as we did in Claim \ref{Claim1}, we first evaluate the test function $\phi$:
\begin{equation}
\label{phi_rep}
\phi= (-\Delta u)|w|^{q-2} -w \cdot \nabla |w|^{q-2},
\end{equation}
so
\begin{align}
\int_s^\tau \big\langle (\nabla a)\cdot w, \phi  \big\rangle dt & = - \int_s^\tau \big\langle (\nabla a)\cdot w, \Delta u\,|w|^{q-2} \big\rangle dt -
 \int_s^\tau \big\langle (\nabla a)\cdot w, w \cdot \nabla |w|^{q-2}\big\rangle dt \notag \\[2mm]
 & =: W_1+W_2. \label{a_est}
\end{align}
The rest goes in two steps:

1.~We estimate $W_1$ using Cauchy-Schwarz and applying $\nabla a \in \mathbf{F}_{\delta_a}$:
\begin{align*}
|W_1| & \leq \frac{\nu_1}{4} \int_s^\tau \langle |w|^{q-2}|\Delta u|^2 \rangle dt + 
\frac{1}{\nu_1} \int_s^\tau \left\langle |\nabla a|^2 |w|^q \right\rangle dt  \\
& \leq \frac{\nu_1}{4} \int_s^\tau \langle |w|^{q-2}|\Delta u|^2 \rangle dt + \frac{1}{\nu_1} \left [\delta_a \frac{q^2}{4}J_q + c_{\delta_a}\int_s^\tau \langle |w|^{q} \rangle \right] 
\end{align*}
In turn, representing $|\Delta u|^2=(\nabla \cdot w)^2$
and integrating by parts twice, we obtain:
\begin{multline*}
\int_s^\tau \langle |w|^{q-2}|\Delta u|^2 \rangle dt  = -\int_s^\tau \langle  w \cdot \nabla |w|^{q-2}, \Delta u \rangle dt
+\sum_{r=1}^d \int_s^\tau \left\langle w \cdot \nabla w_r, \nabla_r |w|^{q-2}\right\rangle dt 
+ I_q\\ =:-F+H+I_q,
\end{multline*}
where we estimate
$$
|F| \leq (q-2)\left( \frac{1}{4\varkappa}  \int_s^\tau \langle |w|^{q-2}|\Delta u|^2\rangle dt + \varkappa J_q\right),
\quad |H| \leq (q-2)\left( \frac{1}{2} I_q+ \frac{1}{2} J_q\right).
$$
Hence
\begin{equation}
\label{Delta_est}
\biggl(1-\frac{q-2}{4\varkappa} \biggr)\int_s^\tau \langle |w|^{q-2}|\Delta u|^2 \rangle dt \leq I_q+(q-2)\left(\varkappa J_q + \frac{1}{2} I_q + \frac{1}{2} J_q \right), \quad \varkappa>\frac{q-2}{4},
\end{equation}
so
$$
|W_1| \leq \frac{\nu_1}{4}\frac{4\varkappa}{4\varkappa-q+2}\biggl(I_q+(q-2)\left(\varkappa J_q + \frac{1}{2} I_q + \frac{1}{2} J_q \right) \biggr) + \frac{1}{\nu_1} \left [\delta_a \frac{q^2}{4} J_q + c_{\delta_a}\int_s^\tau \langle |w|^{q} \rangle dt \right].
$$

2.~We estimate $W_2$: 
\begin{align*}
|W_2| & \leq  (q-2) \int_s^\tau \langle |w|^{\frac{q-2}{2}} |\nabla |w|| |\nabla a||w|^{\frac{q}{2}} \rangle dt \\
& \leq
(q-2) \left[ \nu_2 \int_s^\tau \langle |w|^{q-2} |\nabla |w||^2\rangle dt + 
\frac{1}{4\nu_2} \int_s^\tau \left\langle |\nabla a|^2|w|^q  \right\rangle  dt \right] \\ 
& \leq (q-2) \left[ \nu_2 J_q
+ 
\frac{\delta_a}{4\nu_2} \frac{q^2}{4}J_q +
\frac{1}{4\nu_2} c_{\delta_a}\int_s^\tau \langle |w|^{q}\rangle dt \right].
\end{align*}
It remains to apply these estimates on $|W_1|$, $|W_2|$ in \eqref{a_est} to end the proof.
\end{proof}

We now apply Claims \ref{Claim1} and \ref{Claim2} in \eqref{iq2}:
\begin{align*}
\frac{2}{q}\langle |w(\tau)|^{q} \rangle + \big(1-\sum_{i=1}^2 M_i\big)I_q & + \big(q-2-\sum_{i=1}^2 N_i\big)J_q \leq \frac{2}{q}\langle |\nabla f |^{q} \rangle + K\int_s^\tau \langle |w|^q \rangle dt, \\
& K:=C_1\sum_{r, l=1}^{d}  c_{\gamma_{rl}} + C_2 c_{\delta_a},
\end{align*}
where, in view of the remarks after the claims, $1-\sum_{i=1}^2 M_i>0$ and $q-2-\sum_{i=1}^2 N_i>0$, provided that the form-bounds $\gamma_{rl}$ and $\delta_a$ are sufficiently small.

The previous inequality holds for all $s<\tau \leq T$. In fact, since $I_q(\tau)$, $J_q(\tau)$ and the last term are monotonically increasing in $\tau$, we have
\begin{align*}
\frac{2}{q}\sup_{\tau \in [s,T]}\langle |w(\tau)|^{q} \rangle + \big(1-\sum_{i=1}^2 M_i\big)I_q(T) & + \big(q-2-\sum_{i=1}^2 N_i\big) J_q(T) \leq \frac{2}{q}\langle |\nabla f |^{q} \rangle \\
& + K \int_s^T \langle |w|^q \rangle.
\end{align*}
Hence
\begin{align*}
\frac{2}{q}\sup_{\tau \in [s,T]}\langle |w(\tau)|^{q} \rangle + \big(1-\sum_{i=1}^2 M_i\big)I_q(T) & +  \big(q-2-\sum_{i=1}^2 N_i\big)J_q(T) \leq \frac{2}{q}\langle |\nabla f |^{q} \rangle \\
& + K(T-s)\sup_{\tau \in [s,T]}\langle |w(\tau)|^{q} \rangle.
\end{align*}
Assuming that the interval $[s,T]$ is sufficiently small, we obtain
\begin{align*}
C_0\sup_{\tau \in [s,T]}\langle |w(\tau)|^{q} \rangle + \big(1-\sum_{i=1}^2 M_i\big) I_q(T) + \big(q-2-\sum_{i=1}^2 N_i\big)J_q(T) &\leq \frac{2}{q}\langle |\nabla f |^{q} \rangle,
\end{align*}
where $$C_0:=\frac{2}{q}-K(T-s)>0.$$

Further, applying elementary inequality $I_q \geq  J_q$, we arrive at
\begin{align*}
C_0\sup_{\tau \in [s,T]}\langle |w(\tau)|^{q} \rangle + C J_q \leq   \frac{2}{q}\langle |\nabla f |^{q} \rangle.
\end{align*}
for $C>0$,
which yields the sought inequality with $K_1=\frac{C}{C_0}$ and $K_2=\frac{2}{qC_0}$ -- but so far only on a small interval $[s,T]$. We can, however, extend the previous inequality to the interval of arbitrary length using the semigroup property at the expense of increasing the constants $K_1$, $K_2$.
This ends the proof in the case $F=0$ and $\rho =1$.

\medskip

3.~Let us now handle non-zero right-hand side $F \neq 0$. Since $\rho \equiv 1$ for now, we assume that $F \in L^2([0,T],L^2(\mathbb{R}^d))$.  In this case, after multiplying the parabolic equation by the test function $\phi_r$, summing in $1 \leq r \leq d$ and integrating, we obtain the following term 
$$
\sum_{r=1}^d \int_s^\tau \langle |F|,\phi_r \rangle=:S.
$$
We need to estimate it from above in terms of $I_q$ and $J_q$. To this end, we repeat the argument from \cite[Proof of Theorem 2.2]{KM_JDE}, namely, using \eqref{phi_rep}, we write
\begin{align*}
S =\int_{s}^{\tau} \langle |w|^{q-2}\Delta v,|F| \rangle & + \int_{s}^{\tau} \langle w \cdot \nabla |w|^{q-2},|F| \rangle \\
&=: Z_{1}+Z_{2}.
\end{align*}
Applying Cauchy-Schwarz, we have
\begin{align*}
Z_1 & \leq  \frac{\alpha_{1}}{4}  \int_{s}^{\tau} \langle |w|^{q-2}|\Delta v |^{2}\rangle dt +\frac{1}{\alpha_{1}} \int_{s}^{\tau} \big\langle |w|^{q-2}|F|^2 \big\rangle dt \qquad (\alpha_1>0) \\
& \text{(we are using \eqref{Delta_est})}\\
&\leq \frac{\varkappa \alpha_{1}}{4\varkappa -q+2}\big [I_{q}+(q-2)(\varkappa J_{q}+\frac{1}{2}I_{q}+\frac{1}{2}J_{q})\big ]+\frac{1}{\alpha_{1}} L_q
\end{align*}
where $$L_q:=\int_{s}^{\tau} \big\langle |w|^{q-2}|F|^2 \big \rangle dt.$$
Next,
\begin{align*}
Z_2 &\leq (q-2)\int_{s}^{\tau}  \big \langle |w|^{\frac{q}{2}-1}\nabla |w|, |w|^{(q-2)/2}|F| \big\rangle \quad (\alpha_2>0)\\
&\leq (q-2)\bigg [ \alpha_{2}J_{q}+\frac{1}{4\alpha_{2}} L_q \bigg ].
\end{align*}
It remains to estimate $L_q$:
\begin{align*}
L_{q} &= \int_{s}^{\tau} \langle |F|^{2-\frac{4}{q}}|w|^{q-2}, |F|^{\frac{4}{q}}\rangle \\
& (\text{apply Young's inequality}) \\
&\leq \frac{q-2}{q}\varepsilon^{\frac{q}{q-2}}\int_{s}^{\tau} \langle|F|^{2} |w|^{q} \rangle  +\frac{2}{q}\varepsilon^{-\frac{q}{2}}\int_{s}^{\tau} \langle |F|^2 \rangle  \\
& \text{(we are using $F \in \mathbf{F}_\mu$)}\\
&\leq \frac{q-2}{q}\varepsilon^{\frac{q}{q-2}} \bigg [\mu \frac{q^{2}}{4}J_{q}+\mu \int_{s}^{\tau}\langle |w|^{q}\rangle\bigg ] +\frac{2}{q}\varepsilon^{-\frac{q}{2}}\int_{s}^{\tau} \big\langle |F|^2 \big\rangle.
\end{align*}
This completes the estimate of the right-hand side term $S=\sum_{r=1}^d \int_s^\tau \langle |F|,\phi_r \rangle$.

\medskip

4.~Next, in the weighted setting, i.e.\,when the weight $\rho$ is non-constant, after we multiply the equation by the test function $\phi_r$, we integrate with respect to the measure $\rho(x) dx$. All occurrences of $\nabla \rho$ resulting from the integration by parts are removed using \eqref{two_est}. In the previous estimate we will have $\int_s^\tau \big\langle |F|^2 \rho \big\rangle$, as claimed,  instead of $\int_s^\tau\big\langle |F|^2 \big\rangle$. For details, if needed, see \cite{KM_JDE}, or refer to the end of the proof of Proposition \ref{prop1} below.

\bigskip

\section{Proof of Theorem \ref{thm1}(\textit{ii}): Feller propagator}

The assumptions of Theorem \ref{thm1}(\textit{ii}) imply that the symmetric matrix fields $a=\sigma\sigma^{\top}$, $a_n=\sigma_n \sigma_n^{\top}$ satisfy
$$
a, a_n \in [L^\infty(\mathbb R^{1+d})]^{d \times d}, \quad a,a_n \geq \kappa I \text{ for some fixed $\kappa>0$},
$$
$$
a_n \rightarrow a \quad  \text{ in $[L_{\loc}^2(\mathbb R,L^2_\rho(\mathbb R^d)]^{d \times d}$},
$$
$$
\nabla a_n \rightarrow \nabla a \quad  \text{ in $[L_{\loc}^2(\mathbb R,L^2_\rho(\mathbb R^d)]^{d}$}
$$
and
$$(\nabla_r a_{n,il})_{i=1}^{d} \in  \mathbf{F}_{\gamma_{rl}} \quad \text{ for all }r,l=1,\dots,d, \qquad \nabla a_n \in  \mathbf{F}_{\delta_{a}}$$ 
with the same constants $c_{\delta_a}$, $c_\nu$. The form-bounds $\delta_a$ and $\gamma_{rl}$ are bounded from above by $C\nu$ for appropriate $C>0$, where $\nu$ is the form-bound of $|\nabla \sigma|$ -- this follows from the product rule and $\|\sigma_n\|_\infty \leq \|\sigma\|_\infty<\infty$. So, it suffices for us to show that if $\delta_a$ and $\gamma_{rl}$ are sufficiently small, then assertion (\textit{ii}) of Theorem \ref{thm1} holds.

Without loss of generality, the ellipticity constant $\kappa=1$. We can reverse the direction of time and carry out the proof for solutions of the corresponding initial-value problem written in the divergence form:
\begin{align}
\label{CP4}
\left\{
\begin{array}{l}
\partial_t u - \frac{1}{2} \nabla \cdot a_n \cdot \nabla + \frac{1}{2} \nabla a_n\cdot \nabla u_n=0 \qquad \text{ in } ]s,\infty[ \times \mathbb R^d, \\
 u_n|_{t=s}=f.
\end{array}
\right.
\end{align}
It suffices to construct the forward Feller propagator for $f \in C_c^\infty$ and then use the density argument and the $L^\infty$ contractivity.

We subtract the approximating equations in \eqref{CP4}:
\begin{equation}
\label{diffeq}
(\partial_t + \frac{1}{2} A_n +\frac{1}{2}  \nabla a_n \cdot \nabla )g =  \frac{1}{2} \nabla \cdot (a_n -a_m)\cdot \nabla u_m -  \frac{1}{2} (\nabla a_n - \nabla a_m ) \cdot \nabla u_m
\end{equation} 
where, recall, $A_n=-\nabla \cdot a_n \cdot \nabla$, and we set $g:=u_n-u_m$.

\begin{lemma}[Iteration inequality]
\label{Iter_lemma}
Fix some $0 < \alpha < 1$ and 
$\varsigma$, $\lambda>1$ such that $$\varsigma<\frac{d}{d-2+2\alpha},$$ and $$\frac{1/(1-\alpha)}{\lambda}=\frac{d/(d-2+2\alpha)}{\varsigma}.$$
Then there exist $\theta>0$, $k>1$ and $m_0$ such that for all $m,n \geq m_0$,
for all $r \geq r_0>\frac{2}{2-\sqrt{\delta_a}}$, we have
\begin{multline}
\label{iterineq}
\|u_m-u_n\|_{L^{\frac{r}{1-\alpha}}([s,s+\theta],L^{\frac{rd}{d-2+2\alpha}})}\\ \leq \left(C_0 \|\nabla u_m\|^2_{L^{2\lambda'}([s,s+\theta],L^{2\varsigma'})}\right)^{\frac{1}{r}}(r^{2k})^{\frac{1}{r}}\|u_m-u_n\|_{L^{(r-2)\lambda}([s,s+\theta],L^{(r-2)\varsigma})}^{1-\frac{2}{r}},
\end{multline}
for a constant $C_0$ that does not depend on $m$ or $s$. Here, as usual, $\frac{1}{\varsigma}+\frac{1}{\varsigma'}=1$, $\frac{1}{\lambda}+\frac{1}{\lambda'}=1$. 
\end{lemma}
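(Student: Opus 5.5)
We will prove \eqref{iterineq} by a Moser-type energy estimate for the difference equation \eqref{diffeq}, tested against a power of $g=u_n-u_m$, followed by the Sobolev embedding and H\"older's inequality.

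\emph{Step 1: energy inequality.} Multiply \eqref{diffeq} by $g|g|^{r-2}$ and integrate over $[s,\tau]\times\mathbb R^d$ with $\tau\le s+\theta$; note $g|_{t=s}=0$. In the divergence term, ellipticity ($\kappa=1$) gives the dispersion term $\frac{2(r-1)}{r^2}\int\langle|\nabla |g|^{r/2}|^2\rangle$. The drift term $\frac12\langle \nabla a_n\cdot\nabla g,g|g|^{r-2}\rangle=\frac1r\langle\nabla a_n\cdot\nabla|g|^{r/2},|g|^{r/2}\rangle$ is handled by Cauchy--Schwarz and $\nabla a_n\in\mathbf F_{\delta_a}$. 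This costs $\frac{2\sqrt{\delta_a}}{r}\langle|\nabla|g|^{r/2}|^2\rangle$ plus $c_{\delta_a}\langle|g|^r\rangle$. Absorbing it into the dispersion term requires $\frac{2(r-1)}{r}>\sqrt{\delta_a}$ up to the splitting constant, which is exactly the condition $r\ge r_0>\frac{2}{2-\sqrt{\delta_a}}$. The lower-order term $c_{\delta_a}\int\langle|g|^r\rangle$ is removed by a Gronwall/exponential factor on the short interval, or simply kept and absorbed later since $\theta$ is small.

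\emph{Step 2: the right-hand side of \eqref{diffeq}.} Integrate the divergence term by parts to get
\[
\tfrac{r-1}{2}\bigl\langle (a_n-a_m)\nabla u_m,\nabla g\,|g|^{r-2}\bigr\rangle=\tfrac{r-1}{r}\bigl\langle (a_n-a_m)\nabla u_m,\nabla|g|^{r/2}\,|g|^{r/2-1}\bigr\rangle.
\]
Bound it by $\epsilon\langle|\nabla|g|^{r/2}|^2\rangle+C r^2\|a\|_\infty^2\langle|\nabla u_m|^2|g|^{r-2}\rangle$. For the term with $\nabla a_n-\nabla a_m$, use that both are in $\mathbf F_{\delta_a}$ and Cauchy--Schwarz:
\[
\langle|\nabla a_n-\nabla a_m||\nabla u_m||g|^{r-1}\rangle\le \langle|\nabla a_n-\nabla a_m|^2|g|^r\rangle^{1/2}\langle|\nabla u_m|^2|g|^{r-2}\rangle^{1/2}.
\]
Then apply form-boundedness to $|g|^{r/2}$, yielding again $\epsilon\,r^2\delta_a\langle|\nabla|g|^{r/2}|^2\rangle$ plus a multiple of $\langle|\nabla u_m|^2|g|^{r-2}\rangle$. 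Collecting terms, with polynomial dependence on $r$ giving the factor $r^{2k}$, we obtain
\[
\sup_{[s,s+\theta]}\langle|g|^r\rangle+\int_s^{s+\theta}\langle|\nabla|g|^{r/2}|^2\rangle\le C r^{2k}\int_s^{s+\theta}\langle|\nabla u_m|^2|g|^{r-2}\rangle .
\]

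\emph{Step 3: embedding and H\"older.} Interpolate between $L^\infty_tL^2_x$ and $L^2_tL^{2d/(d-2)}_x$ for $|g|^{r/2}$. With exponents $\frac{2}{1-\alpha}$ in time and $\frac{2d}{d-2+2\alpha}$ in space, the left side controls
\[
\||g|^{r/2}\|^2_{L^{2/(1-\alpha)}L^{2d/(d-2+2\alpha)}}=\|g\|^r_{L^{r/(1-\alpha)}L^{rd/(d-2+2\alpha)}}.
\]
On the right, apply H\"older in space with $(\varsigma',\varsigma)$ and in time with $(\lambda',\lambda)$:
\[
\int\langle|\nabla u_m|^2|g|^{r-2}\rangle\le\|\nabla u_m\|^2_{L^{2\lambda'}L^{2\varsigma'}}\|g\|^{r-2}_{L^{(r-2)\lambda}L^{(r-2)\varsigma}}.
\]
Taking $r$-th roots gives \eqref{iterineq}. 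The exponent relations in the statement ($\varsigma<\frac{d}{d-2+2\alpha}$ and the matched ratio) ensure that the scheme $r\mapsto$ next exponent is a geometric dilation, which is what is needed for subsequent iteration. Finiteness of $\|\nabla u_m\|_{L^{2\lambda'}L^{2\varsigma'}}$ uniformly in $m$ comes from Proposition \ref{prop_grad}, with $q$ large; this is where $\theta$ and $m_0$ enter.

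\emph{Main obstacle.} The delicate point is Step 2 for the term $(\nabla a_n-\nabla a_m)\cdot\nabla u_m$. Each of $\nabla a_n,\nabla a_m$ is only form-bounded, so the contributions must be absorbed with constants independent of $m,n$ and growing only polynomially in $r$, while keeping the total coefficient of $\langle|\nabla|g|^{r/2}|^2\rangle$ below the dispersion constant uniformly in $r\ge r_0$. I would first try using the weight-free form-boundedness with $\epsilon\sim r^{-2}$ splittings. This produces the $r^{2k}$ loss, which is harmless in Moser iteration.
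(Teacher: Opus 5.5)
Your proposal is correct and is essentially the paper's argument, which defers to the drift-case iteration lemma of the Osaka paper. You test the difference equation with $g|g|^{r-2}$, absorb the $\nabla a_n\cdot\nabla g$ term via $\nabla a_n\in\mathbf{F}_{\delta_a}$, integrate the new term $\nabla\cdot(a_n-a_m)\cdot\nabla u_m$ by parts using $\sup_n\|a_n\|_\infty<\infty$, treat $(\nabla a_n-\nabla a_m)\cdot\nabla u_m$ by Cauchy--Schwarz and form-boundedness, and conclude with the Sobolev embedding, interpolation and H\"older. One small slip: the splitting $\frac{1}{r}|\langle\nabla a_n\cdot\nabla|g|^{r/2},|g|^{r/2}\rangle|\le\frac{\sqrt{\delta_a}+\epsilon}{r}\langle|\nabla|g|^{r/2}|^2\rangle+\frac{C_\epsilon}{r}\langle|g|^r\rangle$ costs $\frac{\sqrt{\delta_a}}{r}$, not $\frac{2\sqrt{\delta_a}}{r}$, and it is this sharp constant that gives exactly the threshold $r>\frac{2}{2-\sqrt{\delta_a}}$.
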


The proof repeats the proof of \cite[Lemma 2]{Ki_Osaka}, see also \cite{KiS_note}, and consists of applying Cauchy-Schwarz inequality and using the form-boundedness of $\nabla a_n$. The only new term that we get, after multiplying \eqref{diffeq} by $g=u_m-u_n$, is 
$$
\int_s^\tau \langle \nabla \cdot (a_n -a_m)\cdot \nabla u_m, \eta |\eta |^{1-\frac{2}{r}} \rangle
$$
where $\eta :=  g|g|^{\frac{r-2}{2}}$. It is estimated, after integrating by parts and applying Cauchy-Schwarz, using the uniform boundedness of $a_n$ (in $(t,x)$ and $n$).

\medskip

Now, one can iterate the  inequality in Lemma \ref{Iter_lemma}; these are Moser's iterations. 
The crucial point here is that we can appeal to the gradient bound of Proposition \ref{prop_grad}, after applying the Sobolev embedding theorem and using the interpolation inequality, to establish that in \eqref{iterineq}
$$
\sup_m \|\nabla u_m\|^2_{L^{2\lambda'}([s,s+\theta],L^{2\varsigma'})}<\infty.
$$
Set $$D_T=\{(s,t) \mid 0  \leq s \leq t \leq T\}, \quad D_{T,\,\theta}:=D_{T} \cap \{(s,t) \mid 0 \leq t-s \leq \theta\}, \quad \theta<T.$$
Let $U_n=\{U_n^{t,s}\}_{0 \leq s \leq t \leq T}$ denote the (forward) Feller propagator corresponding to the initial-value problem \eqref{CP4}.

\begin{lemma}
\label{Iter2_lemma}
For any $ r_0>\frac{2}{2-\sqrt{\delta_a}}$ there exist $\theta>0$, constants $B<\infty$ and $$\gamma:=
\bigl( 1-\frac{\varsigma d }{d+2}\bigr)\biggl(1-\frac{\varsigma d }{d+2}+\frac{2\varsigma}{r_0} \biggr)^{-1}
>0$$ {\rm(}$1<\varsigma<\frac{d+2}{d}${\rm)} independent of $m,n$ such that
\begin{equation}
\label{iterineq0}
\|U_mf-U_nf\|_{L^\infty(D_{T,\,\theta} \times \mathbb R^d)}  \leq B \sup_{0 \leq s \leq T-\theta}\|U_mf-U_nf\|^\gamma_{L^{r_0}([s,s+\theta],L^{r_0})} \quad \text{ for all } n,m.
\end{equation}
\end{lemma}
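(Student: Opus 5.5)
Lemma \ref{Iter2_lemma} will follow by iterating the iteration inequality \eqref{iterineq} of Lemma \ref{Iter_lemma} in the standard Moser fashion. Put $g=U_mf-U_nf$ and fix $s\in[0,T-\theta]$. Write $\beta:=\frac{1/(1-\alpha)}{\lambda}=\frac{d/(d-2+2\alpha)}{\varsigma}>1$, the gain factor of one application. Define exponents recursively by $r_{k+1}=\beta r_k+2$, starting from $r_0$, so that $(r_{k+1}-2)\lambda=\frac{r_k}{1-\alpha}$ and $(r_{k+1}-2)\varsigma=\frac{r_kd}{d-2+2\alpha}$. Each application of \eqref{iterineq} with $r=r_{k+1}$ then bounds the mixed norm $\|g\|_{L^{r_{k+1}/(1-\alpha)}L^{r_{k+1}d/(d-2+2\alpha)}}$ by the previous one raised to $1-2/r_{k+1}$. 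Everything is run with the mixed space-time norms $L^{p\lambda}([s,s+\theta],L^{p\varsigma})$, and the identification of consecutive norms follows from the choice of exponents in Lemma \ref{Iter_lemma}.

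The first key input is that the prefactor $C_0\|\nabla u_m\|^2_{L^{2\lambda'}([s,s+\theta],L^{2\varsigma'})}$ is bounded uniformly in $m$ and $s$. To get this I will use Proposition \ref{prop_grad} with $F=0$ in the unweighted case. Taking $q$ large, it gives $\sup_t\|\nabla u_m(t)\|_{q}\le C(\|\nabla f\|_q+\|f\|_q)$. Interpolating with the $L^2$ bound on $\nabla u_m$ coming from the energy inequality (Proposition \ref{contr_prop} and its proof) controls $\|\nabla u_m(t)\|_{2\varsigma'}$ uniformly in $t$. Integrating in time over an interval of length $\theta\le T$ then gives the $L^{2\lambda'}$ bound, and the constant depends only on $f\in C_c^\infty$.

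Iterating $k$ times gives
\[
\|g\|_{L^{r_k/(1-\alpha)}L^{r_kd/(d-2+2\alpha)}}\le \prod_{j\le k}\bigl(C r_j^{2k_0}\bigr)^{\frac{1}{r_j}\prod_{i>j}(1-2/r_i)}\ \|g\|_{L^{r_0\lambda'' }L^{r_0\varsigma''}}^{\prod_{j\le k}(1-2/r_j)},
\]
with the base norm chosen compatible with $r_0$. Here $k_0$ is the exponent $k$ from Lemma \ref{Iter_lemma}, renamed to avoid a clash with the iteration index. Since $r_j\sim\beta^j$, the series $\sum_j r_j^{-1}\log(Cr_j^{2k_0})$ converges. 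This bounds the product by a constant $B$. The exponents $\prod_j(1-2/r_j)$ converge to a positive limit, which I expect to be the stated $\gamma$ once the exponent bookkeeping is redone with $\alpha$ chosen appropriately, consistent with the constraint $\varsigma<\frac{d+2}{d}$. Letting $k\to\infty$, the left-hand side tends to $\|g\|_{L^\infty([s,s+\theta]\times\mathbb R^d)}$. The base norm reduces to the $L^{r_0}([s,s+\theta],L^{r_0})$ norm, using Hölder on the finite time interval together with the uniform $L^\infty$ bound $\|g\|_\infty\le 2\|f\|_\infty$ to absorb mismatched integrability exponents. Taking the supremum over $s$ covers $D_{T,\theta}$.

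The main obstacle is this exponent bookkeeping: matching the base norm to exactly $L^{r_0}L^{r_0}$ while producing precisely the stated $\gamma$, and making sure $\theta$, $k_0$ and $C_0$ from Lemma \ref{Iter_lemma} are uniform in $s$ and $m,n$. For this I would first use $\|g\|_\infty\le 2\|f\|_\infty$ and interpolation, $\|g\|_{p}\le\|g\|_{r_0}^{r_0/p}\|g\|_\infty^{1-r_0/p}$, to reduce any base exponent $p\ge r_0$ to $r_0$.
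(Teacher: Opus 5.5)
Your overall strategy, Moser iteration of \eqref{iterineq} with the factor $\|\nabla u_m\|_{L^{2\lambda'}L^{2\varsigma'}}$ bounded uniformly in $m$ and $s$, is the one the paper relies on via \cite[Lemma 3]{Ki_Osaka}. As written, however, your argument does not give the lemma with the stated $\gamma$, and the step you propose for the base norm actually rules it out.

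Starting the recursion from the left-hand norm of index $r_0$ and then interpolating $\|g\|_p\leq\|g\|_{r_0}^{r_0/p}\|g\|_\infty^{1-r_0/p}$ costs the factor $r_0/p<1$ in the exponent. Take $\alpha=\frac{2}{d+2}$, the choice matching the constraint $1<\varsigma<\frac{d+2}{d}$ in the statement. Then your $\beta$ equals $\frac{d+2}{d\varsigma}$ and your base norm is $\|g\|_{L^{\frac{d+2}{d}r_0}([s,s+\theta]\times\mathbb R^d)}$. Your product $\prod_{j\geq1}(1-2/r_j)$ tends to $\frac{r_0}{r_0+2/(\beta-1)}$. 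Interpolating down to $L^{r_0}$ multiplies this by $\frac{d}{d+2}$, so you get $\gamma'=\frac{d}{d+2}\cdot\frac{r_0}{r_0+2/(\beta-1)}$. This is strictly smaller than the stated $\gamma$, which in this notation is $\frac{r_0}{r_0+\frac{2(d+2)}{d(\beta-1)}}$. Since $\|g\|_{L^{r_0}}$ is small in the regime of interest, a bound with a smaller power is weaker and does not imply \eqref{iterineq0}. It would still suffice for the subsequent convergence argument, but it is not the statement.

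The fix is to run the iteration in pure space-time norms and start it from the right-hand side of \eqref{iterineq}. With $\alpha=\frac{2}{d+2}$ you have $\frac{1}{1-\alpha}=\frac{d}{d-2+2\alpha}=\frac{d+2}{d}$ and $\lambda=\varsigma$. Writing $N(p):=\|g\|_{L^p([s,s+\theta]\times\mathbb R^d)}$, \eqref{iterineq} reads $N(\frac{d+2}{d}r)\leq (Cr^{2k})^{1/r}N((r-2)\varsigma)^{1-2/r}$. Put $r_1:=\frac{r_0}{\varsigma}+2$ and $r_{j+1}:=\beta r_j+2$. Then $(r_1-2)\varsigma=r_0$ and $(r_{j+1}-2)\varsigma=\frac{d+2}{d}r_j$, so the base norm is exactly $N(r_0)$ and no interpolation is needed. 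Using $\frac{\beta}{\beta-1}=(1-\frac{\varsigma d}{d+2})^{-1}$,
\[
\prod_{j=1}^n\Bigl(1-\frac{2}{r_j}\Bigr)=\frac{(r_1-2)\beta^{n-1}}{r_n}\longrightarrow\frac{r_0/\varsigma}{r_0/\varsigma+\frac{2\beta}{\beta-1}}=\frac{1-\frac{\varsigma d}{d+2}}{1-\frac{\varsigma d}{d+2}+\frac{2\varsigma}{r_0}}=\gamma .
\]
All $r_j>2$, so Lemma \ref{Iter_lemma} applies at every step with a fixed threshold in $]\frac{2}{2-\sqrt{\delta_a}},2]$ (recall $\delta_a$ is small). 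The product of constants converges as you say.

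A smaller point concerns the gradient factor. The energy inequality gives only $\nabla u_m\in L^2_tL^2_x$, not a bound uniform in $t$, so your interpolation there is not justified as written. It is also unnecessary. Since $\varsigma<\frac{d+2}{d}$ gives $2\varsigma'>d+2>d$, Proposition \ref{prop_grad} with $q=2\varsigma'$ (unweighted, $F=0$) directly yields $\sup_{m,s}\|\nabla u_m\|_{L^{2\varsigma'}([s,s+\theta]\times\mathbb R^d)}<\infty$.
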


See \cite[proof of Lemma 3]{Ki_Osaka}, see also \cite{KiS_note}.

\medskip
 
That $\{U_mf\}$ indeed converges in $L^{r_0}([s,s+\theta], L^{r_0})$ is proved in the same way as in \cite[Lemma 4]{Ki_Osaka}. 
That is, it suffices to prove that $\{U_mf\}$ is a Cauchy sequence in $L^{2}([s,s+\theta] \times \mathbb R^d)$ and then interpolate with the $L^\infty$ contractivity bound for $u_n$, $u_m$.
In turn, the proof of convergence in $L^{2}([s,s+\theta],L^2)$
consists of two steps:

1) Subtracting again the equations for $u_m$, $u_n$ and applying Cauchy-Schwarz and the uniform in $n$ form-boundedness of $\nabla a_n$, but this time working in the weighted space $L^2_\rho$. The only new term that we get is 
$$
\int_s^\tau \langle \nabla \cdot (a_n -a_m)\cdot \nabla u_m, \rho (u_m-u_n)\rangle.
$$
We need to show that it tends to zero as $n$, $m \rightarrow \infty$. This is done using the integration by parts where the property \eqref{two_est} plays a crucial role,  invoking the gradient bound of Proposition \ref{prop_grad} and the convergence $a_n-a_m \rightarrow 0$ in $L^2_{\loc}(\mathbb R,L^2_\rho(\mathbb R^d))$.

2) ``cutting tails'' of $u_n$, i.e.\,working with weight $1-\rho$ and using the compact support assumption on the initial function $f$ (so $u_n$ must be small, uniformly in $n$, when considered away from the support of $f$).

\medskip

This yields the existence of the limit
$$
\exists\; s\mbox{-}C_\infty\mbox{-}\lim_{m \rightarrow \infty}U_m^{t,s}=:U^{t,s}f \quad \text{(uniformly in $(s,t) \in D_{T,\theta}$)}, 
$$ 
which can then be extended to $D_T$ by showing the consistency of the limits.
The sought backward Feller propagator is now obtained as $P^{r,t}:=U^{T-r,T-t}$. \hfill \qed

 \medskip

 \section{Proof of Theorem \ref{thm2}} 
\label{thm2_proof_sect}

Following the discussion after Theorem \ref{thm2}, we only need to comment on the proof of the strong existence part (\textit{i}).

In the next estimate, assume that the functions $f_i \in L^2_{\loc}(\mathbb R^{d+1})$ ($i \geq 1$) are form-bounded
\begin{equation}
\label{fbb_fi}
\|f_i(t,\cdot)\varphi\|_2^2   \leq \nu \|\nabla \varphi\|_2^2 +  g_{\nu}(t) \|\varphi \|_2^2
\end{equation}
for some $\nu>0$, with $0 \leq g_\nu \in L^{1+\varepsilon}(\mathbb R)$.

By the assumptions of the theorem,
\begin{equation}
\label{b_fbb}
b \in \mathbf{F}_{\delta,g_\delta} \quad \text{ with $0 \leq g_\delta \in L^{1+\varepsilon}(\mathbb R)$.}
\end{equation}

Everywhere below, drift $b$ and functions $f_i$ are additionally assumed to be smooth. However, the constants in the estimates will not depend on smoothness or boundedness of $b$ and $f_i$. 

By classical theory, there exists a unique strong solution $X_t^x$ to
$$
X_t^x=x+\int_0^t b(\tau,X_\tau^x)d\tau + W_t.
$$
Let $0 \leq T_0 \leq T_1 \leq T$ where $T$ is fixed.

\begin{proposition3prime}
Assume that $\theta>0$ in the definition of $\rho$ is fixed sufficiently
small. There exist positive constants $C_0$ and $K$, independent of the
smoothness and boundedness of $b$ and $f_i$, such that, for every $n\geq 1$,
every $0\leq T_0<T_1\leq T$, and every $1\leq \alpha_i\leq d$,
\begin{equation}
 \left\|
 \mathbf E\int_{\Delta_n(T_0,T_1)}
       \prod_{i=1}^n
       \nabla_{\alpha_i}f_i(t_i,X_{t_i}^{\,\cdot})
       \,dt_1\cdots dt_n
 \right\|_{L^2_\rho(\mathbb R^d)}^2
 \leq
 C_0K^n(T_1-T_0)^{\frac{\varepsilon}{1+\varepsilon}}.
\end{equation}
Moreover, $K$ can be made arbitrarily small by requiring the form-bounds
$\delta$ and $\nu$ in \eqref{b_fbb} and \eqref{fbb_fi} to be sufficiently
small. 
\end{proposition3prime}

\begin{proof}
1.~We first assume 
\begin{equation}
\label{A_b}
 \sprt f_i\subset\mathbb R\times B_R(0), \quad  i\geq1,
 \tag{$\mathrm{A}_b$}
\end{equation}
for some $R>0$ independent of $i$, 
and work with $\rho\equiv1$. Fix $n$,
put $u_{n+1}=1$, and define consecutively
\begin{equation*}
 h_k=(\nabla_{\alpha_k}f_k)u_{k+1},
 \qquad k=1,\ldots,n,
\end{equation*}
where $u_k$ solves the terminal-value problem
\begin{equation}
 \partial_tu_k+\frac{1}{2}\Delta u_k+b\cdot\nabla u_k+h_k=0,
 \qquad
 u_k(T_1,\cdot)=0
 \quad\text{on }[T_0,T_1].
\label{eq_k1}
\end{equation}
As in \cite[Proof of Lemma 4.2]{RZ},
\begin{equation}
\mathbf E_{\mathcal F^W_{T_0}} \int_{\Delta_n(T_0,T_1)}\prod_{i=1}^n
\nabla_{\alpha_i}f_i(t_i,X_{t_i}^x)
\,dt_1\cdots dt_n=u_1(T_0,X_{T_0}^x).
\end{equation}

Set
\begin{equation*}
v(t,x):=\mathbf Eu_1(T_0,X^x_{T_0}), \quad 0\leq t\leq T_0,
\end{equation*}
and define
\begin{equation*}
\tilde{u}(t,\cdot):=
\begin{cases}
v(t,\cdot), &0\leq t\leq T_0,\\
u_1(t,\cdot), &T_0\leq t\leq T_1,
\end{cases}
\quad
\tilde{h}(t,\cdot):=
\begin{cases}
0, &0\leq t<T_0,\\
h_1(t,\cdot), &T_0\leq t\leq T_1.
\end{cases}
\end{equation*}
Then
\begin{equation*}
\partial_t\tilde{u}+\frac{1}{2}\Delta \tilde{u}+b\cdot\nabla \tilde{u} +\tilde{h}=0, \quad \tilde{u}(T_1,\cdot)=0,
\end{equation*}
and
\begin{equation*}
\tilde{u}(0,x)=\mathbf E\int_{\Delta_n(T_0,T_1)} \prod_{i=1}^n \nabla_{\alpha_i}f_i(t_i,X_{t_i}^x)\,dt_1\cdots dt_n.
\end{equation*}

Put, as in the proof of Proposition \ref{prop1},
\begin{equation}
\label{FG}
F(t):=\lambda\int_0^t\bigl[g_\delta(s)+g_\nu(s)\bigr]\,ds, \quad G(t):=\lambda\int_{T_0}^t\bigl[g_\delta(s)+g_\nu(s)\bigr]\,ds.
\end{equation}
In particular,
\begin{equation*}
 e^{F(t)}=e^{F(T_0)}e^{G(t)},
 \qquad T_0\leq t\leq T_1.
\end{equation*}

Fix $M,\beta>0$. Multiplying the equation for $\tilde{u}$ by
$e^F\tilde{u}$ and integrating over $[0,T_1]\times\mathbb R^d$, we
obtain
\begin{align*}
 &\frac{1}{2}\langle \tilde{u}^2(0)\rangle
 +\frac{1}{2}\int_0^{T_1}e^F\langle|\nabla\tilde{u}|^2\rangle\,dt
 +\frac{\lambda}{2}\int_0^{T_1}
       (g_\delta+g_\nu)e^F\langle\tilde{u}^2\rangle\,dt
 \\
 &\qquad =
 \int_0^{T_1}e^F
       \langle b\cdot\nabla\tilde{u},\tilde{u}\rangle\,dt
 +
 \int_{T_0}^{T_1}e^F
       \langle(\nabla_{\alpha_1}f_1)u_2,u_1\rangle\,dt.
\end{align*}
The drift term satisfies
\begin{equation*}
 \int_0^{T_1}e^F
       \langle b\cdot\nabla\tilde{u},\tilde{u}\rangle\,dt
 \leq
 \sqrt{\delta}\int_0^{T_1}
       e^F\langle|\nabla\tilde{u}|^2\rangle\,dt
 +
 \frac{1}{2\sqrt{\delta}}\int_0^{T_1}
       g_\delta e^F\langle\tilde{u}^2\rangle\,dt.
\end{equation*}
Integrating by parts in the last term of the preceding energy identity
and applying the Cauchy-Schwarz inequality twice, we get
\begin{align*}
&\int_{T_0}^{T_1}e^F\langle(\nabla_{\alpha_1}f_1)u_2,u_1\rangle\,dt \\
&\quad = -\int_{T_0}^{T_1}e^F\langle f_1(\nabla_{\alpha_1}u_2),u_1\rangle\,dt-\int_{T_0}^{T_1}e^F\langle f_1u_2,\nabla_{\alpha_1}u_1\rangle\,dt \\
&\quad \leq M\int_{T_0}^{T_1}e^F\langle f_1^2,u_1^2\rangle\,dt +\frac{1}{4M}\int_{T_0}^{T_1} e^F\langle|\nabla u_2|^ 2\rangle\,dt \\
&\qquad\quad +\beta\int_{T_0}^{T_1}e^F\langle f_1^2,u_2^2\rangle\,dt +\frac{1}{4\beta}\int_{T_0}^{T_1} e^F\langle|\nabla u_1|^2\rangle\,dt \\
&\quad \leq \left(M\nu+\frac{1}{4\beta}\right)\int_{T_0}^{T_1}e^F\langle|\nabla u_1|^2\rangle\,dt \\
&\qquad\quad +\left(\beta\nu+\frac{1}{4M}\right) \int_{T_0}^{T_1}e^F\langle|\nabla u_2|^2\rangle\,dt \\
&\qquad\quad +M\int_{T_0}^{T_1}g_\nu e^F\langle u_1^2\rangle\,dt +\beta\int_{T_0}^{T_1}g_\nu e^F\langle u_2^2\rangle\,dt.
\end{align*}

Set
\begin{equation*}
 C_1:=\frac12-\sqrt{\delta}-M\nu-\frac{1}{4\beta},
 \qquad
 C_2:=\beta\nu+\frac{1}{4M}.
\end{equation*}
We first choose $M$ and $\beta$ sufficiently large, then choose a
constant $c>0$ sufficiently large, and finally require $\delta$ and
$\nu$ to be sufficiently small, so that $C_1>0$ and $K:=
\frac{C_2}{C_1} \vee \frac{\beta}{cC_1}<1$.
We can furthermore choose the aforementioned constants to make $K$ as small as needed. Fix $\lambda$ sufficiently large that
\begin{equation*}
 \frac{\lambda}{2}-\frac{1}{2\sqrt{\delta}}\geq0,
 \qquad
 \frac{\lambda}{2}-M\geq cC_1.
\end{equation*}
Then
\begin{equation}
 \langle\widetilde u^2(0)\rangle
 \leq
 CK\left(
       \int_{T_0}^{T_1}e^G\langle|\nabla u_2|^2\rangle\,dt
       +
       c\int_{T_0}^{T_1}g_\nu e^G\langle u_2^2\rangle\,dt
 \right),
\label{u2_est}
\end{equation}
where the factor $e^{F(T_0)}$ has been absorbed into $C$.

We next apply the same calculation to $u_k$, $2\leq k\leq n-1$.
Multiplying \eqref{eq_k1} by $e^Gu_k$ and integrating over
$[T_0,T_1]\times\mathbb R^d$ gives
\begin{align*}
 &C_1\int_{T_0}^{T_1}e^G\langle|\nabla u_k|^2\rangle\,dt
 +
 \left(\frac{\lambda}{2}-M\right)
 \int_{T_0}^{T_1}g_\nu e^G\langle u_k^2\rangle\,dt
 \\
 &\qquad
 +
 \left(\frac{\lambda}{2}-\frac{1}{2\sqrt{\delta}}\right)
 \int_{T_0}^{T_1}g_\delta e^G\langle u_k^2\rangle\,dt
 \\
 &\quad \leq
 C_2\int_{T_0}^{T_1}e^G
       \langle|\nabla u_{k+1}|^2\rangle\,dt
 +
 \beta\int_{T_0}^{T_1}g_\nu e^G
       \langle u_{k+1}^2\rangle\,dt.
\end{align*}
Consequently,
\begin{equation}
 \int_{T_0}^{T_1}e^G
       \left(
          \langle|\nabla u_k|^2\rangle
          +c\,g_\nu\langle u_k^2\rangle
       \right)\,dt
 \leq
 K\int_{T_0}^{T_1}e^G
       \left(
          \langle|\nabla u_{k+1}|^2\rangle
          +c\,g_\nu\langle u_{k+1}^2\rangle
       \right)\,dt.
\label{uk1_est}
\end{equation}
For $n\geq2$, iteration gives
\begin{equation*}
 \int_{T_0}^{T_1}e^G
       \left(
          \langle|\nabla u_2|^2\rangle
          +c\,g_\nu\langle u_2^2\rangle
       \right)\,dt
 \leq
 K^{n-2}
 \int_{T_0}^{T_1}e^G
       \left(
          \langle|\nabla u_n|^2\rangle
          +c\,g_\nu\langle u_n^2\rangle
       \right)\,dt.
\end{equation*}
It remains to estimate the last integral. Since $u_{n+1}=1$, we have
$h_n=\nabla_{\alpha_n}f_n$. The energy inequality for $u_n$ and
the integration by parts give
\begin{align*}
 &\int_{T_0}^{T_1}e^G
       \left(
          \langle|\nabla u_n|^2\rangle
          +c\,g_\nu\langle u_n^2\rangle
       \right)\,dt
 \\
 &\qquad\leq
 C\left|
 \int_{T_0}^{T_1}e^G
       \langle\nabla_{\alpha_n}f_n,u_n\rangle\,dt
 \right|
 \\
 &\qquad=
 C\left|
 \int_{T_0}^{T_1}e^G
       \langle f_n,\nabla_{\alpha_n}u_n\rangle\,dt
 \right|
 \\
 &\quad\leq
 \frac{1}{2}
 \int_{T_0}^{T_1}e^G
       \left(
          \langle|\nabla u_n|^2\rangle
          +c\,g_\nu\langle u_n^2\rangle
       \right)\,dt
 +
 C\int_{T_0}^{T_1}e^G\langle f_n^2\rangle\,dt.
\end{align*}
Hence,
\begin{equation}
 \int_{T_0}^{T_1}e^G
       \left(
          \langle|\nabla u_n|^2\rangle
          +c\,g_\nu\langle u_n^2\rangle
       \right)\,dt
 \leq
 C\int_{T_0}^{T_1}e^G\langle f_n^2\rangle\,dt.
\label{un_est}
\end{equation}
Choose $\varphi\in C_c^\infty(\mathbb R^d)$ such that
$\varphi\geq1$ on $B_R(0)$. By \eqref{A_b} and \eqref{fbb_fi},
\[
 \langle f_n^2(t)\rangle
 \leq
 \langle f_n^2(t),\varphi^2\rangle
 \leq
 \nu\langle|\nabla\varphi|^2\rangle
 +g_\nu(t)\langle\varphi^2\rangle.
\]
Since $G$ is bounded on $[T_0,T_1]$, H\"older's inequality yields
\begin{align}
 \int_{T_0}^{T_1}e^G\langle f_n^2\rangle\,dt \leq
 C\left[
       T_1-T_0+\int_{T_0}^{T_1}g_\nu(t)\,dt
   \right] \leq
 C(T_1-T_0)^{\frac{\varepsilon}{1+\varepsilon}},
\label{int_last}
\end{align}
where we used that $T$ is fixed.

Combining \eqref{u2_est}-\eqref{int_last}, we obtain, for $n\geq2$,
\begin{equation*}
 \left\|
 \mathbf E\int_{\Delta_n(T_0,T_1)}
       \prod_{i=1}^n
       \nabla_{\alpha_i}f_i(t_i,X_{t_i}^{\,\cdot})
       \,dt_1\cdots dt_n
 \right\|_2^2
 \leq
 CK^{n-1}(T_1-T_0)^{\frac{\varepsilon}{1+\varepsilon}}.
\end{equation*}
After changing the constant $C$, this has the asserted form
$C_0K^n(T_1-T_0)^{\frac{\varepsilon}{1+\varepsilon}}$.

When $n=1$, the same conclusion follows directly by applying the
energy inequality to $\tilde{u}$ and using
\begin{equation*}
 \int_{T_0}^{T_1}e^F
       \langle\nabla_{\alpha_1}f_1,\tilde{u}\rangle\,dt
 =
 -\int_{T_0}^{T_1}e^F
       \langle f_1,\nabla_{\alpha_1}\tilde{u}\rangle\,dt,
\end{equation*}
followed by the preceding cutoff estimate for $f_1$. Taking $C_0$ large, if necessary,
gives the sought estimate for $n=1$.

\medskip

2.~Let us remove the assumption \eqref{A_b}. We repeat the preceding
calculation in $L^2_\rho(\mathbb R^d)$. Applying the form-boundedness
condition on $f_i$ to ``test function'' $\sqrt{\rho} u$ gives, for every fixed $\eta>0$,
\begin{equation*}
 \langle f_i^2,u^2\rangle_\rho
 \leq
 (1+\eta)\nu\langle|\nabla u|^2\rangle_\rho
 +
 \bigl(g_\nu+C_\eta\nu\theta\bigr)
 \langle u^2\rangle_\rho,
\end{equation*}
and the analogous estimate holds for $b$ (with $\delta$ and $g_\delta$). Here we have used \eqref{two_est}.
The integration by parts in the diffusion term produces additional
terms bounded by
\begin{equation*}
 \eta\langle|\nabla u|^2\rangle_\rho
 +C_\eta\theta\langle u^2\rangle_\rho.
\end{equation*}
In addition, integration by parts in
$\langle(\nabla_{\alpha_k}f_k)u_{k+1},u_k\rangle_\rho$ produces
\begin{equation*}
-\int_{\mathbb R^d}f_ku_{k+1}u_k\,\nabla_{\alpha_k}\rho\,dx.
\end{equation*}
This term is estimated by
\begin{align*}
\left|\int_{\mathbb R^d}f_ku_{k+1}u_k\,\nabla_{\alpha_k}\rho\,dx\right|
&\leq\eta\langle f_k^2,u_{k+1}^2\rangle_\rho+C_\eta
\left\langle\frac{|\nabla\rho|^2}{\rho^2},u_k^2\right\rangle_\rho\\
&\leq \eta\langle f_k^2,u_{k+1}^2\rangle_\rho + C_\eta\theta\langle u_k^2\rangle_\rho.
\end{align*}
Thus, after first fixing $\eta$ and $\theta$ sufficiently small, all
the additional terms are absorbed by the preceding argument, with
$g_\delta+g_\nu$ in \eqref{FG} replaced by
$g_\delta+g_\nu+C\theta$.

Finally, using \eqref{fbb_fi} with $\sqrt{\rho}$ as a test function,
\begin{equation*}
\langle f_n^2(t)\rangle_\rho \leq \nu\langle|\nabla\sqrt{\rho}|^2\rangle +g_\nu(t)\langle\rho\rangle \leq C\nu\theta+Cg_\nu(t).
\end{equation*}
It follows that
\begin{equation*}
 \int_{T_0}^{T_1}e^G
       \langle f_n^2(t)\rangle_\rho\,dt
 \leq
 C(T_1-T_0)^{\frac{\varepsilon}{1+\varepsilon}}.
\end{equation*}
The weighted versions of \eqref{u2_est}-\eqref{int_last} now give the
asserted $L^2_\rho(\mathbb R^d)$ estimate.
\end{proof}

The next proposition, which is a consequence of Proposition $\ref{prop1}'$, verifies compactness of the Wiener-Sobolev space following \cite{RZ}; the reader can compare it with Proposition \ref{prop2} in the proof of Theorem \ref{thm1}.

\begin{proposition4prime}
For every $r \geq 1$, there exist constants $K_1$, $K_2$ (independent of smoothness or boundedness of $b$) such that

\medskip

{\rm (\textit{i})} $\sup_{y \in \mathbb R^d}\|\nabla X_t^x - I\|_{L^{2r}(B_1(y),L^r(\Omega))} \leq K_1 t^{\frac{1}{4r}\frac{\varepsilon}{1+\varepsilon}}$ for all $0 \leq t \leq T$;

\medskip

{\rm (\textit{ii})} $\sup_{y \in \mathbb R^d}\|D_sX_t^x - I\|_{L^{2r}(B_1(y),L^r(\Omega))} \leq K_1(t-s)^{\frac{1}{4r}\frac{\varepsilon}{1+\varepsilon}}$ for a.e.\,$s \in [0,T]$ and $0 \leq s \leq t \leq T$;

\medskip

{\rm (\textit{iii})} $\sup_{y \in \mathbb R^d}\|D_s X_t^x - D_{s'}X_t^x\|_{L^{2r}(B_1(y),L^r(\Omega))} \leq K_2|s-s'|^{\frac{1}{8r}\frac{\varepsilon}{1+\varepsilon}}$ for a.e.\,$s,s' \in [0,T]$ and $0 \leq s,s' \leq t \leq T$.
\end{proposition4prime}

In detail, since $b$ is bounded and smooth, one has
\begin{equation}
\label{sde_b_deriv}
\nabla X_t^x-I=\int_0^t \nabla b(s,X_s^x)\cdot \nabla X_s^x ds.
\end{equation}
We iterate this identity, 
$$
\nabla X_t^x-I=\sum_{n=1}^\infty \int_{\Delta_n(t)} \prod_{i=1}^n \nabla b(t_i,X_{t_i}^x)dt_1\dots dt_n,
$$
so
\begin{equation}
\label{sum}
\|\nabla X_t^x-I\|_{L^{2r}(\mathbb R^d,L^r(\Omega))} \leq \sum_{n=1}^\infty \left\|\int_{\Delta_n(t)} \prod_{i=1}^n \nabla b(t_i,X_{t_i}^x) dt_1\dots dt_n \right\|_{L^{2r}(\mathbb R^d,L^r(\Omega))}.
\end{equation}
It is at this step that we apply Proposition $\ref{prop1}'$ with $f_i$ taken to be the derivatives of the components of $b$. The Malliavin derivatives satisfy an SDE of the same type as \eqref{sde_b_deriv} and are controlled in the same way using Proposition $\ref{prop1}'$. The remainder of the proof is identical to \cite[Proof of Theorem 1]{KiM_strong} and follows closely the proof of Theorem \ref{thm1}.

\bigskip

\appendix

\section{Example of $\{b_n\}$}

\label{Conv_b_n}

In Theorem \ref{thm2}, we consider a bounded smooth approximation $\{b_m\}$ of form-bounded $b \in \mathbf{F}_{\delta, g_\delta}$ that satisfies

1) $\{b_m\}$ are form-bounded with the same $\delta$ and $\sup_m \|g_{\delta,m}\|_{L^{1+\varepsilon}(\mathbb R)}<\infty$;

2)
 \begin{equation}
\label{global_cnv}
 b_m\rightarrow b \quad\text{in } [L_{\loc}^2(\mathbb R,L^2_{\rho}(\mathbb R^d))]^d
 \quad\text{as }m\to\infty.
 \end{equation}

\medskip

One straightforward way to construct $\{b_m\}$ is as follows. 

Let $\{\gamma_\varepsilon\}$ denote the standard mollifier on $\mathbb R^{1+d}$, i.e.\,a family of positive smooth functions in $C_c^\infty(\mathbb R^{1+d})$ defined by 
$$
\gamma_{\varepsilon}(s,y):=\frac{1}{\varepsilon^{1+d}}\gamma(s/\varepsilon,y/\varepsilon), \quad (s,y) \in \mathbb R^{1+d}
$$ 
with $\int_{\mathbb R^{1+d}}\gamma=1$.

\begin{lemma}
\label{lem_b}
The vector fields
$$
b_m:=\gamma_{\varepsilon_m} \star b,
$$
where $\star$ is the convolution on $\mathbb R^{1+d}$ and $\varepsilon_m \downarrow 0$,
satisfy 1), 2).
\end{lemma}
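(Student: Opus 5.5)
We will verify 1) and 2) separately. Both reduce to elementary properties of convolution on $\mathbb R^{1+d}$, combined with the observation that the form-boundedness inequality is preserved under averaging. The averaging in the spatial variable is what keeps the form-bound $\delta$ unchanged; the averaging in time is absorbed into the function $g$.

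For 1), we fix $t$ and $\varphi\in C_c^\infty(\mathbb R^d)$. We write $b_m(t,x)=\int\gamma_{\varepsilon_m}(s,y)\,b(t-s,x-y)\,ds\,dy$. Since $\gamma_{\varepsilon_m}\,ds\,dy$ is a probability measure, Jensen's (Cauchy--Schwarz) inequality gives
$$
|b_m(t,x)|^2\le\int\gamma_{\varepsilon_m}(s,y)\,|b(t-s,x-y)|^2\,ds\,dy .
$$
Integrating against $\varphi^2$ and using Fubini, we obtain
$$
\|b_m(t,\cdot)\varphi\|_2^2\le\int\gamma_{\varepsilon_m}(s,y)\,\|b(t-s,\cdot)\varphi(\cdot+y)\|_2^2\,ds\,dy .
$$
We then apply $b\in\mathbf F_{\delta,g_\delta}$ at time $t-s$ to the translated test function $\varphi(\cdot+y)$, whose $\|\nabla\cdot\|_2$ and $\|\cdot\|_2$ norms coincide with those of $\varphi$. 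This is valid for a.e.\ $t-s$, which suffices inside the integral. The result is
$$
\|b_m(t,\cdot)\varphi\|_2^2\le\delta\|\nabla\varphi\|_2^2+g_{\delta,m}(t)\|\varphi\|_2^2,
$$
where $g_{\delta,m}:=\bar\gamma_{\varepsilon_m}*g_\delta$ is the convolution in time with the time marginal $\bar\gamma_{\varepsilon}(s)=\int\gamma_\varepsilon(s,y)\,dy$. By Young's inequality, $\|g_{\delta,m}\|_{L^{1+\varepsilon}}\le\|g_\delta\|_{L^{1+\varepsilon}}$, which gives 1). The smoothness and boundedness of $b_m$ need a short remark, since $b$ is only in $L^2_{\loc}$. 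Local boundedness and smoothness are immediate. If global boundedness is required, we would first truncate, replacing $b$ by $b\mathbf 1_{|b|\le m}$. Truncation preserves form-boundedness, because $|b\mathbf 1_{|b|\le m}|\le|b|$.

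For 2), we use the weight. Since $\rho(x-y)\le C\rho(x)$ uniformly for $|y|\le1$ (this is the polynomial form of $\rho$; alternatively it follows from \eqref{two_est}), translations act boundedly on $L^2_\rho$, with norms bounded uniformly for $|y|\le1$. Moreover, $b\in L^2_{\loc}(\mathbb R,L^2_\rho)$. This follows from form-boundedness tested with $\sqrt\rho$, as in the proof of Proposition~\ref{prop1}:
$$
\langle|b(t)|^2\rangle_\rho\le C\delta\theta+Cg_\delta(t),
$$
which is locally integrable in $t$. The standard argument then runs as follows. Since $\gamma_{\varepsilon}*b-b$ is an average of $b(\cdot-s,\cdot-y)-b$, Minkowski's inequality yields
$$
\|b_m-b\|_{L^2([-T,T],L^2_\rho)}\le\sup_{|s|+|y|\le\varepsilon_m}\|b(\cdot-s,\cdot-y)-b\|_{L^2([-T,T],L^2_\rho)} .
$$
The right-hand side tends to zero by continuity of translations in the weighted space. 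We prove this continuity by approximating $b$ on $[-T-1,T+1]\times\mathbb R^d$ in $L^2(dt\,\rho\,dx)$ by compactly supported continuous functions, which is possible by density, and using the uniform boundedness of the translation operators.

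The only delicate point is the a.e.-in-time application of the form-bound inside the integral, and the fact that the inequality must hold for all $\varphi$ simultaneously. We handle this by taking a null set independent of $\varphi$: apply the inequality on a countable dense family of $\varphi$ and extend by density. We expect no serious obstacle beyond this.
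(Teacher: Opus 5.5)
Your proof of 1) matches the paper's: Jensen under the mollifier, translation invariance of $\|\varphi(\cdot+y)\|_2$ and $\|\nabla\varphi(\cdot+y)\|_2$, and $g_{\delta,m}$ being a time-mollification of $g_\delta$. For 2) your argument is correct but takes a genuinely different route.

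The paper starts from the standard convergence $b_m\to b$ in $L^2_{\loc}(\mathbb R^{1+d})$ and upgrades it with a tail estimate that is uniform in $m$. Since $b_m-b$ is form-bounded with parameters $4\delta$ and $2(g_{\delta,m}+g_\delta)$, it tests with $\eta_R\sqrt\rho$, where $\eta_R$ vanishes on $B_R$ and equals $1$ off $B_{R+1}$. This bounds $\int_0^T\int_{|y|>R+1}|b_m-b|^2\rho\,dy\,dt$ by $4\delta T\|\nabla(\eta_R\sqrt\rho)\|_2^2+2\int_0^T(g_{\delta,m}+g_\delta)\,dt\,\|\eta_R\sqrt\rho\|_2^2$, which tends to $0$ as $R\to\infty$. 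That step uses the uniform control of $g_{\delta,m}$ obtained in 1).

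You use form-boundedness only once, to get $b\in L^2_{\loc}(\mathbb R,L^2_\rho)$. You then use the quasi-invariance $\rho(x+y)\le C\rho(x)$ for $|y|\le1$, which makes small translations uniformly bounded and continuous on the weighted space, and Minkowski's inequality finishes.

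What each route buys:
\begin{itemize}
\item Yours is more elementary and needs nothing about $b$ beyond $b\in L^2_{\loc}(\mathbb R,L^2_\rho)$, but it is tied to convolution.
\item The paper's tail argument works for any sequence of uniformly form-bounded fields converging in $L^2_{\loc}$. That is exactly how it is invoked again in $6'$) of Lemma \ref{approx_lem}.
\end{itemize}

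One side remark: you do not need to truncate to get global boundedness, and truncating would replace the sequence in the statement by a different one. Test the form-boundedness inequality with a bump equal to $1$ on $B_{\varepsilon_m}(x)$ and supported in $B_{2\varepsilon_m}(x)$. This gives $\int_{B_{\varepsilon_m}(x)}|b(t,\cdot)|^2\le C(\delta\varepsilon_m^{d-2}+g_\delta(t)\varepsilon_m^{d})$ uniformly in $x$. Since $g_\delta\in L^{1+\varepsilon}(\mathbb R)$, its integrals over time windows of length $2\varepsilon_m$ are uniformly bounded. Hence $\gamma_{\varepsilon_m}\star b$ and all its derivatives are globally bounded.
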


\begin{remark}
It is easy to see that $\{b_m\}$ defined slightly differently,
$$
b_m:=\gamma_{\varepsilon_m} \star (\mathbf{1}_m b),
$$
where $\mathbf{1}_m$ is the indicator function of the set $\{(t,x) \in \mathbb R^{1+d} \mid |t| \leq m, |x| \leq m, |b(t,x)| \leq m\}$,
are form-bounded with form-bound slightly larger -- but still independent of $m$ -- than $\delta$ (not a problem since all our conditions on all form-bounds are strict inequalities), provided that we choose $\varepsilon_m \downarrow 0$ sufficiently rapidly. Indeed, we can write
$$
b_m:=\mathbf{1}_m b + \big(\gamma_{\varepsilon_m} \star \mathbf{1}_m b- \mathbf{1}_m b\big).
$$
The first term in the RHS, i.e.\,vector field $\mathbf{1}_m b$, is obviously form-bounded with the same $\delta$ and even with the same function $g_\delta$. The second term is a difference between a bounded vector field with compact support and its mollification; it is not difficult to modify the first example in Section \ref{sing_drift_sect} dealing with the Lebesgue class drifts to show that, provided that $\varepsilon_m\downarrow 0$ sufficiently rapidly, the form-bound of $\gamma_{\varepsilon_m} \star \mathbf{1}_m b- \mathbf{1}_m b$ goes to zero as $m \rightarrow \infty$. Finally, we can use the fact that the sum of two form-bounded vector fields is a form-bounded vector field.
\end{remark}

\begin{proof}[Proof of Lemma \ref{lem_b}]
The boundedness and smoothness of $b_m$ follow from the standard properties of mollifiers. 

1. Let us prove 1). For every $t \in \mathbb R$ and $\varphi\in C_c^\infty(\mathbb R^d)$,
\begin{align*}
\|b_m(t,\cdot)\varphi\|_2^2 &\leq \int_{\mathbb R}\int_{\mathbb R^d} \gamma_{\varepsilon_m}(s,y) \int_{\mathbb R^d}
|b(t-s,x-y)|^2|\varphi(x)|^2 dxdyds.
\end{align*}
Hence
$$
\|b_m(t,\cdot)\varphi\|_2^2 \leq \int_{\mathbb R}\int_{\mathbb R^d} \gamma_{\varepsilon_m}(s,y) \int_{\mathbb R^d}|b(t-s,z)|^2|\varphi(z+y)|^2dzdyds. 
$$  
Applying the form-boundedness of $b(t-s, \cdot)$ to $\varphi (\cdot+y)$ and using the translation-invariance of the $L^2$ norm, we obtain
\begin{equation}
\|b_m(t,\cdot)\varphi\|_2^2 \leq \delta\|\nabla\varphi\|_2^2 + g_{\delta,m}(t) \|\varphi\|_2^2, 
\end{equation} 
where 
$
g_{\delta,m}$ is the convolution of $t \mapsto \int_{\mathbb{R}^d} \gamma_{\varepsilon_m}(\cdot,y)dy$ and $t \mapsto g_\delta(t)$. Since the former is itself a mollifier (on $\mathbb R$), and the mollification preserves the $L^{1+\varepsilon}$ norm, we have
$$
\|g_{\delta,m}\|_{L^{1+\varepsilon}(\mathbb R)} \leq  \|g_\delta\|_{L^{1+\varepsilon}(\mathbb R)},
$$
i.e.\,we have verified 1).

\medskip

 2. To verify 2), we first note that, by the standard properties of mollifiers,
\begin{equation}
\label{conv_l2_loc}
b_m\rightarrow b \quad\text{in } [L_{\loc}^2(\mathbb R^{1+d})]^d.
\end{equation}
Our goal now is to upgrade this convergence to \eqref{global_cnv}.

To this end, for each $R>0$, define functions
 $$
 \eta_R(y):=\xi_R(|y|), \qquad \xi_R(r):=
 \begin{cases}
 0, & 0\leq r<R,\\
 r-R, & R\leq r\leq R+1,\\
 1, & r>R+1.
 \end{cases}
 $$
 Then
 $$
 \mathbf{1}_{\mathbb{R}^d\setminus B_{R+1}} \leq \eta_R^2 \qquad\text{and}\qquad
 |\nabla\eta_R| \leq \mathbf{1}_{B_{R+1}\setminus B_R}.
 $$
Next, it is easily seen that $b_m-b$ is form-bounded with parameters $4\delta$ and $2(g_{\delta,m}+g_\delta)$, where, by the argument in the previous step, $\sup_m\|2(g_{\delta,m}+g_\delta)\|_{L^1(\mathbb R)}<\infty$ (note that we have $L^1$ here, not $L^{1+\varepsilon}$, although this is insignificant at this step).
Therefore, using the form-boundedness of $b_m-b$ with test function $\eta_R\sqrt{\rho}$, we have 
 \begin{align*}
\int_0^T \int_{\mathbb{R}^d\setminus B_{R+1}} |b_m(t,y)-b(t,y)|^2\rho(y)\,dydt
\leq
 4\delta T \bigl\|\nabla(\eta_R\sqrt{\rho})\bigr\|_2^2 + 2\int_0^T (g_{\delta,m}(t)+g_\delta(t)) 
 dt \bigl\|\eta_R\sqrt{\rho}\bigr\|_2^2.
 \end{align*}
Since $\rho$ is in $L^1(\mathbb R^d)$, vanishes at infinity and satisfies $\eqref{two_est}$, we have $\bigl\|\eta_R\sqrt{\rho}\bigr\|_2
 \rightarrow 0$ and $\bigl\|\nabla(\eta_R\sqrt{\rho})\bigr\|_2 \rightarrow 0 $ as $R\to\infty$. Therefore, 
 \begin{equation}
 \lim_{R\to\infty}\sup_m \int_0^T\int_{\mathbb{R}^d\setminus B_{R+1}} |b_m(t,y)-b(t,y)|^2\rho(y)\,dy\,dt =0.
 \end{equation}
 On the other hand, for each $R>0$, we have by \eqref{conv_l2_loc}  
 \begin{align*}
 &\int_0^T\int_{B_{R+1}} |b_m(t,y)-b(t,y)|^2\rho(y)\,dy\,dt\\
 &\leq \|\rho\|_{L^\infty(B_{R+1})} \int_0^T\int_{B_{R+1}} |b_m(t,y)-b(t,y)|^2\,dy\,dt \rightarrow 0
 \end{align*}
 as $m\to\infty$. This, and the previous convergence, yield \eqref{global_cnv}.    
 \end{proof}

 \bigskip

\section{Example of $\{\sigma_n\}$}

\label{approx_app}

Let $\sigma$ be as in Theorem \ref{thm1}, i.e.\,for every $t \in \mathbb R$, 

1) $\sigma(t,\cdot) \in [L^\infty(\mathbb R^d)]^{d \times d}$,

2) $\sigma(t,\cdot) \sigma(t,\cdot)^{\top} \geq \kappa I$ a.e.\,on $\mathbb R^d$,

3) the form-boundedness condition \eqref{nabla_sigma} on the derivatives of $\sigma$ holds, i.e.\,for a.e.\,$t \in \mathbb R$
\begin{equation*}
\||\nabla \sigma(t,\cdot)|\varphi\|_2^2   \leq \nu \|\nabla \varphi\|_2^2+ c_\nu\|\varphi\|_2^2 \quad \forall\,\varphi \in C_c^\infty(\mathbb R^d),
\end{equation*}

4) ($\mathbf{H}$) holds for $\sigma$, i.e.\,$\sigma$ is locally uniformly H\"{o}lder continuous as a $[L^2_{\rho}(\mathbb R^d))]^{d \times d}$-valued function.

\medskip

Let $\{\eta_\varepsilon\}$ denote the standard (spatial) mollifier on $\mathbb R^d$, i.e.\,for a fixed $0 \leq \eta \in C_c^\infty(B_1)$,  $\int_{\mathbb R^d} \eta(x)dx=1$, put $\eta_\varepsilon(x):=\varepsilon^{-d}\eta(x/\varepsilon)$.

\begin{lemma} 
\label{approx_lem}

Assuming that the form-bound $\nu$ is sufficiently small, the sequence $\sigma^\varepsilon(t,\cdot):=\eta_\varepsilon \ast \sigma(t,\cdot)$, satisfies, for all $\varepsilon$ sufficiently small, uniformly in a.e.\,$t \in \mathbb R$, 

$1'$) $\|\sigma^\varepsilon(t,\cdot)\|_{L^\infty(\mathbb R^{d})} \leq \|\sigma(t,\cdot)\|_{L^\infty(\mathbb R^{d})} $

$2'$) $
\sigma^\varepsilon(t,\cdot) (\sigma^\varepsilon(t,\cdot))^{\top} \geq \frac{\kappa}{4}I;
$

$3'$) \eqref{nabla_sigma} holds for $\sigma^\varepsilon$ with the same form-bound $\nu$ and constant $c_\nu$, i.e.\,independent of $\varepsilon$;

$4'$)  ($\mathbf{H}$) holds for $\sigma^\varepsilon$ with the same H\"{o}lder continuity exponent and constant.

$5')$ $$\|\sigma(t,\cdot)-\sigma^\varepsilon(t,\cdot)\|_{L_\rho^2(\mathbb R^d)} \leq C_1\varepsilon, \quad t \in \mathbb R.$$

$6'$) Set $a=\sigma\sigma^{\top}$, $a^\varepsilon=\sigma^\varepsilon(\sigma^\varepsilon)^{\top}$. Then 
$$
\nabla_k a^\varepsilon \rightarrow \nabla_k a \quad \text{ in } [L_{\loc}^2(\mathbb R,L^2_\rho(\mathbb R^d))]^{d \times d} \text{ as $\varepsilon \downarrow 0$}
$$
for all $1 \leq k \leq d$. That is, we have componentwise convergence of the derivatives of $a^\varepsilon$, which in particular implies the convergence of the row-divergence operators $\nabla a^\varepsilon \rightarrow \nabla a$ in $[L_{\loc}^2(\mathbb R,L^2_\rho(\mathbb R^d))]^{d}$.
\end{lemma}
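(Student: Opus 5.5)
We take $\sigma^\varepsilon(t,\cdot)=\eta_\varepsilon*\sigma(t,\cdot)$ and verify $1'$)--$6'$) one at a time. Properties $1'$), $3'$), $4'$) come from convolution being an averaging operation.

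For $1'$), write $\sigma^\varepsilon$ as an average of translates of $\sigma$; the sup norm cannot increase.

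For $3'$), note first that $\nabla\sigma^\varepsilon=\eta_\varepsilon*\nabla\sigma$, hence $|\nabla\sigma^\varepsilon|^2\le\eta_\varepsilon*|\nabla\sigma|^2$ by Jensen. Next, apply the form-boundedness of $|\nabla\sigma(t,\cdot)|$ to the translate $\varphi(\cdot+y)$ and integrate against $\eta_\varepsilon(y)\,dy$. This is exactly the argument in the proof of Lemma \ref{lem_b}, and it gives the same $\nu$ and $c_\nu$.

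For $4'$), apply Minkowski's inequality to $\sigma^\varepsilon(t)-\sigma^\varepsilon(s)=\int\eta_\varepsilon(y)\,[\sigma(t,\cdot-y)-\sigma(s,\cdot-y)]\,dy$. Each translate contributes $\|\sigma(t)-\sigma(s)\|_{L^2_{\rho_{y'+y}}}$, and ($\mathbf H$) is uniform in the weight center. So we keep the same $C$ and $\gamma$.

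For $5'$), use $|\sigma(x)-\sigma^\varepsilon(x)|\le\int\eta_\varepsilon(y)\int_0^1|\nabla\sigma(x-\tau y)|\,|y|\,d\tau\,dy$. Together with Minkowski this gives $\|\sigma-\sigma^\varepsilon\|_{L^2_\rho}\le\varepsilon\sup_{|z|\le1}\|\nabla\sigma(t,\cdot)\|_{L^2_{\rho(\cdot+z)}}$. To bound the last quantity, apply form-boundedness with test function $\sqrt{\rho_{-z}}$ and use \eqref{two_est}: $\langle|\nabla\sigma|^2\rangle_{\rho_{-z}}\le\nu\|\nabla\sqrt{\rho}\|_2^2+c_\nu\|\rho\|_1\le C$. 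This yields $C_1\varepsilon$ uniformly in $t$.

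$2'$) is the main obstacle, because ellipticity is not preserved by averaging $\sigma$. It is preserved for $a$, but $\sigma^\varepsilon(\sigma^\varepsilon)^\top\ne\eta_\varepsilon*a$. We need a smallness estimate on the oscillation of $\sigma$ at scale $\varepsilon$. Fix $x$. By the Poincaré inequality on $B_\varepsilon(x)$, $\fint_{B_\varepsilon(x)}|\sigma-\sigma_{B_\varepsilon(x)}|^2\le C\varepsilon^{2-d}\int_{B_\varepsilon(x)}|\nabla\sigma|^2$. Apply form-boundedness with a cutoff $\varphi$ equal to $1$ on $B_\varepsilon(x)$, supported in $B_{2\varepsilon}(x)$, with $|\nabla\varphi|\le 2/\varepsilon$. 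This gives $\int_{B_\varepsilon}|\nabla\sigma|^2\le C\nu\varepsilon^{d-2}+c_\nu C\varepsilon^d$. Hence, uniformly in $x$ and $t$, the oscillation satisfies $\fint_{B_\varepsilon(x)}|\sigma-\sigma_{B_\varepsilon(x)}|^2\le C(\nu+c_\nu\varepsilon^2)$: the matrix field is in BMO with small norm at small scales. Since $\eta_\varepsilon$ is bounded by $C\varepsilon^{-d}$ on $B_\varepsilon$, we get $|\sigma^\varepsilon(x)-\sigma(x')|$ small in mean over $x'\in B_\varepsilon(x)$. Then, for a unit vector $\xi$,
\[
|(\sigma^\varepsilon)^\top(x)\xi|\ge\Bigl(\fint_{B_\varepsilon(x)}|\sigma^\top\xi|^2\Bigr)^{1/2}-\Bigl(\fint_{B_\varepsilon(x)}|\sigma-\sigma^\varepsilon(x)|^2\Bigr)^{1/2}\ge\sqrt\kappa-C\sqrt{\nu+c_\nu\varepsilon^2}.
\]
This is at least $\sqrt\kappa/2$ once $\nu$ and $\varepsilon$ are small, giving $\sigma^\varepsilon(\sigma^\varepsilon)^\top\ge\frac{\kappa}{4}I$. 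This is where the smallness of $\nu$ is used.

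For $6'$), write $\nabla_k a^\varepsilon=(\nabla_k\sigma^\varepsilon)(\sigma^\varepsilon)^\top+\sigma^\varepsilon(\nabla_k\sigma^\varepsilon)^\top$. We have $\nabla_k\sigma^\varepsilon\to\nabla_k\sigma$ in $L^2_{\loc}(\mathbb R^{1+d})$, while $\sigma^\varepsilon\to\sigma$ a.e. with uniform bounds. By dominated convergence in the product terms, this gives $L^2_{\loc}$ convergence. To upgrade to $L^2_\rho$, control the tails outside $B_R$ uniformly in $\varepsilon$ exactly as in step 2 of the proof of Lemma \ref{lem_b}. Apply form-boundedness of $|\nabla\sigma|$ and $|\nabla\sigma^\varepsilon|$ with test function $\eta_R\sqrt\rho$, use boundedness of $\sigma,\sigma^\varepsilon$, and integrate over compact time intervals.
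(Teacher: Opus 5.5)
Your proposal is correct and follows essentially the same route as the paper. Both use Jensen plus translation-invariant form-boundedness for $3'$), the segment formula plus form-boundedness tested on $\sqrt{\rho}$ for $5'$), and local $L^2$ convergence plus uniform tail control with $\eta_R\sqrt{\rho}$ for $6'$). For $2'$), both use Poincar\'e on $B_\varepsilon(x)$ together with form-boundedness on a cutoff, giving oscillation of order $\sqrt{\nu+c_\nu\varepsilon^2}$ at scale $\varepsilon$.

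The differences are minor. In $2'$) you average the reverse triangle inequality over $B_\varepsilon(x)$, whereas the paper picks a good point $y$ by Chebyshev. In $4'$) your Minkowski argument with translated weights, using the supremum over centres in ($\mathbf{H}$), gives literally the same H\"older constant. The paper's comparison $\rho(y+z)\le C_d\rho(y)$ only gives that constant up to a dimensional factor.
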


\begin{proof}
$1'$) is straightforward. Let us prove $3'$). We have $\nabla \sigma^\varepsilon(t,\cdot)=\eta_\varepsilon \ast \nabla \sigma(t,\cdot)$. Therefore, by Cauchy-Schwarz,
$$
|\nabla \sigma^\varepsilon(t,\cdot)|^2 \leq \eta_\varepsilon \ast |\nabla \sigma(t,\cdot)|^2.
$$
Now, we verify form-boundedness of $|\nabla \sigma^\varepsilon(t,\cdot)|$ uniformly in $t$:
\begin{align*}
\langle |\nabla \sigma^\varepsilon(t,\cdot)|^2,\varphi^2\rangle & \leq \langle \eta_\varepsilon \ast |\nabla \sigma(t,\cdot)|,\varphi^2\rangle \\
& = \langle |\nabla \sigma(t,\cdot)|^2,\eta_\varepsilon \ast \varphi^2 \rangle \\
& (\text{use form-boundedness \eqref{nabla_sigma} of $|\nabla \sigma(t,\cdot)|$}) \\
& \leq \nu \langle |\nabla \sqrt{\eta_\varepsilon \ast \varphi^2}|^2 \rangle + c_\nu \langle \eta_\varepsilon \ast \varphi^2 \rangle \\ 
& (\text{Cauchy-Schwarz yields $|\nabla \sqrt{\eta_\varepsilon \ast \varphi^2}|^2 \leq \eta_\varepsilon \ast |\nabla \varphi|^2$}) \\
& \leq \nu \langle |\nabla \varphi|^2\rangle + c_\nu \langle \varphi^2 \rangle,
\end{align*}
as claimed (see e.g.\,\cite[Sect.\,4.4]{KiS_theory} for details, if needed).

\medskip

Proof of $2'$). We will use $3'$), i.e.\,uniform in $\varepsilon$ form-boundedness of $|\nabla \sigma^\varepsilon|$. Let $t \in \mathbb R$.
Our goal is to show that the minimal eigenvalue $\lambda_{\min}$ of $\sigma^\varepsilon(t,x)(\sigma^\varepsilon(t,x))^{\top}$ is greater or equal to $\frac{\kappa}{4}$. 
To this end, we first note  that there exists $y \in B_\varepsilon(x)$ such that
\begin{equation}
\label{sigma_sigma}
|\sigma^\varepsilon(t,x)-\sigma(t,y)|_{{\rm o}} \leq C \sqrt{\nu+c_\nu \varepsilon^2}
\end{equation}
for a universal constant $C$,
where $|A|_{{\rm o}}:=\max_{v \in \mathbb R^d, |v|=1} |Av|$ (o stands for ``operator norm''); the operator norm and the Euclidean norms of $A$ are equivalent, but it will be convenient to work with the operator norm. We prove \eqref{sigma_sigma} below. With \eqref{sigma_sigma} at hand, we can estimate
$$
\sqrt{\lambda_{\min}(\sigma^\varepsilon(t,x)(\sigma^\varepsilon(t,x))^{\top})} \geq \sqrt{\lambda_{\min}(\sigma(t,y)\sigma(t,y)^{\top}}
) - |\sigma^\varepsilon(t,x)-\sigma(t,y)|_{{\rm o}},$$
which is, taking into account the variational characterization of eigenvalues of $\sigma\sigma^{\top}$, simply the inequality
$$
\min_{|v|=1}|\sigma^\varepsilon(t,x)v| \geq \min_{|v|=1}|\sigma(t,y)v| - \max_{|v|=1}|(\sigma^\varepsilon(t,x)-\sigma(t,y))v|.
$$
Therefore,
\begin{align*}
\sqrt{\lambda_{\min}(\sigma^\varepsilon(t,x)(\sigma^\varepsilon(t,x))^{\top})} & \geq \sqrt{\kappa} - C \sqrt{\nu+c_\nu \varepsilon^2} \\
& \geq \frac{\sqrt{\kappa}}{2}
\end{align*}
provided $\nu$ is sufficiently small, starting with some small $\varepsilon$, as claimed.

It remains to prove \eqref{sigma_sigma}. To this end, we compare $\sigma^\varepsilon(t,x)$ and $\sigma(t,y)$ to a common average: $\sigma_{B_\varepsilon(x)}(t):=\frac{1}{|B_\varepsilon(x)|}\int_{B_\varepsilon(x)}\sigma(t,z)dz$. 

Step 1: We have 
\begin{align*}
|\sigma_\varepsilon(t,x)-\sigma_{B_\varepsilon}(t)|_{{\rm o}}^2 & = \biggl|\int_{B_\varepsilon(x)} \eta_\varepsilon(x-z)(\sigma(t,z)-\sigma_{B_\varepsilon}(t))dz \biggr|_{{\rm o}}^2 \\
& \leq \int_{B_\varepsilon(x)} \eta_\varepsilon(x-z)|\sigma(t,z)-\sigma_{B_\varepsilon}(t)|_{{\rm o}}^2 dz \\
& (\text{use $\|\eta_\varepsilon\|_\infty \leq c\varepsilon^{-d}$}) \\
& \leq c\varepsilon^{-d}\int_{B_\varepsilon(x)} |\sigma(t,z)-\sigma_{B_\varepsilon}(t)|^2 dz \\
& (\text{apply the Poincar\'{e} inequality}) \\
& \leq C\varepsilon^{2-d} \int_{B_\varepsilon(x)} |\nabla \sigma(t,z)|^2dz = C\varepsilon^{2-d} \int_{\mathbb R^d} \mathbf{1}_{B_\varepsilon(x)}|\nabla \sigma(t,z)|^2dz\\
& (\text{apply form-boundedness of $|\nabla \sigma(t,\cdot)|$ after replacing $\mathbf{1}_{B_\varepsilon(x)}$ } \\
& \text{by non-negative smooth function  $\varphi_\varepsilon=1$ on $B_\varepsilon(x)$, $\varphi_\varepsilon=0$ on $\mathbb R^d - B_{2\varepsilon}(x)$}) \\
& \leq C_1\varepsilon^{2-d} \bigl(\nu \varepsilon^{d-2} + c_\nu \varepsilon^d \bigr) = C_1\big( \nu  + c_\nu \varepsilon^2\big).
\end{align*}

Step 2: Let us now compare $\sigma(t,y)$, for some choice of $y \in B_\varepsilon(x)$, and  $\sigma_{B_\varepsilon}(t)$. By the estimates above,
$$
\varepsilon^{-d}\int_{B_\varepsilon(x)} |\sigma(t,z)-\sigma_{B_\varepsilon}(t)|_{{\rm o}}^2 dz \leq C\big( \nu  + c_\nu \varepsilon^2\big).
$$
Therefore, 
\begin{align*}
|\{z \in B_\varepsilon(x) \mid |\sigma(t,z)-\sigma_{B_\varepsilon}(t)|_{{\rm o}}^2>2C\big( \nu  + c_\nu \varepsilon^2\big)\}| & \leq \frac{1}{2C\big( \nu  + c_\nu \varepsilon^2\big)}\int_{B_\varepsilon(x)}|\sigma(t,z)-\sigma_{B_\varepsilon}(t)|_{{\rm o}}^2dz \\
& \leq  
\frac{1}{2C\big( \nu  + c_\nu \varepsilon^2\big)} \varepsilon^d C\big( \nu  + c_\nu \varepsilon^2\big) = \frac{\varepsilon^d}{2}.
\end{align*}
Since the set of $x$ where the ellipticity condition $\sigma(t,x)\sigma(t,x)^{\top} \geq \kappa I$ is violated, if non-empty, has measure zero, by the previous estimate there exist (plenty of) $y \in B_\varepsilon(x)$ such that 
$$
|\sigma(t,y)-\sigma_{B_\varepsilon}(t)|_{{\rm o}}^2 \leq 2C\big( \nu  + c_\nu \varepsilon^2\big).
$$

Combining the estimates of Step 1 and Step 2, we arrive at \eqref{sigma_sigma}, up to re-denoting $C$.

\medskip

Proof of $5'$). 
Fix $t \in \mathbb R$. We have
\begin{align}
 |\sigma(t,x)-\sigma^\varepsilon(t,x)|^2
 &=\left|\int_{B_1}\eta(z)\bigl(\sigma(t,x)-\sigma(t,x-\varepsilon z)\bigr)\,dz\right|^2 \notag\\
 &\leq \int_{B_1}\eta(z) |\sigma(t,x)-\sigma(t,x-\varepsilon z)|^2\,dz,
 \label{mf}
\end{align}
where, in turn, 
 $\sigma(t,x)-\sigma(t,x-\varepsilon z)=\varepsilon\int_0^1
 (\nabla \sigma) (t, x- s \varepsilon z)\,z ds$.
Then
\begin{equation}
\label{mf2}
|\sigma(t,x)-\sigma(t,x-\varepsilon z)|^2 \leq \varepsilon^2|z|^2\int_0^1|\nabla \sigma(t,x-s\varepsilon z)|^2 ds,
\end{equation}
so, multiplying \eqref{mf} by $\rho$, integrating over $\mathbb R^d$ and using \eqref{mf2}, we obtain
\begin{align*}
 \|\sigma(t,\cdot)-\sigma^\varepsilon(t,\cdot)\|_{L_\rho^2(\mathbb R^d)}^2 \leq \varepsilon^2 \int_{B_1}\eta(z)|z|^2 \int_0^1\int_{\mathbb R^d}\rho(x)|\nabla \sigma(t,x-s\varepsilon z)|^2 dx ds dz.
\end{align*}
Making the change of variables $y=x-s \varepsilon z$ for fixed $z$, and using $|z|\leq 1$ and $0\leq s\varepsilon  \leq 1$, so that $|s \varepsilon z|\leq 1$, we have $\rho(y+s\varepsilon z)\leq C_d\rho(y)$. Hence we arrive at
\begin{align*}
\|\sigma(t,\cdot)-\sigma^\varepsilon(t,\cdot)\|_{L_\rho^2(\mathbb R^d)}^2 & \leq C_d\varepsilon^2\left(\int_{B_1}|z|^2\eta(z)\,dz\right)\int_{\mathbb R^d}\rho(y)|\nabla \sigma(t,y)|^2 dy \\
&\leq C\varepsilon^2\|\nabla \sigma(t,\cdot) \|_{L_\rho^2(\mathbb R^d)}^2.
\end{align*}
It remains to apply the form-boundedness \eqref{nabla_sigma} of $|\nabla\sigma(t,\cdot)|$ and use the fact that both $\rho$ and $|\nabla \rho|$ are square integrable, to arrive at the claimed inequality $\|\sigma(t,\cdot)-\sigma^\varepsilon(t,\cdot)\|_{L_\rho^2(\mathbb R^d)} \leq C_1\varepsilon$.

\medskip

Proof of $4')$. Note first that, given a function (or matrix field) $h$ on $\mathbb R^d$, we have
\begin{align*}
\|\eta_\varepsilon \ast h\|_{L^2_\rho(\mathbb R^d)}^2  & \leq C\int_{\mathbb R^d} \eta_\varepsilon(z)\int_{\mathbb R^d} \rho(x) |h(x-z)|^2dxdz \\
& = C\int_{\mathbb R^d} \eta_\varepsilon(z)\int_{\mathbb R^d} \rho(y+z) |h(y)|^2dydz \\
& (\text{use that if $|z| \leq 1$, then $\rho(y+z) \leq C_d\rho(y)$, $y \in \mathbb R^d$}) \\
& \leq C \|h\|_{L^2_\rho(\mathbb R^d)}^2.
\end{align*}
Applying this inequality to $h(\cdot)=\sigma(t,\cdot)-\sigma(s,\cdot)$, we see that the locally uniform H\"{o}lder continuity of $\sigma$ in time as a  $L^2_\rho$-valued function is preserved by the mollification in the spatial variables.

\medskip

Proof of $6'$). Fix $1 \leq k \leq d$. We need to establish convergence of $\nabla_k a^\varepsilon$ to $\nabla_k a$ in $L^2([t_0,t_1],L^2_{\rho}(\mathbb R^d))$. We assume that $t_0$, $t_1$ are fixed from now on. We have
\begin{equation*}
\nabla_k a=(\nabla_k\sigma)\sigma^\top + \sigma(\nabla_k\sigma)^\top,
\end{equation*}
Similarly,
\begin{equation*}
\nabla_k a^\varepsilon=(\nabla_k\sigma^\varepsilon)(\sigma^\varepsilon)^\top+\sigma^\varepsilon(\nabla_k\sigma^\varepsilon)^\top.
\end{equation*}
Subtracting these two identities from each other, we obtain
\begin{align*}
\nabla_k a^\varepsilon-\nabla_k a= & (\nabla_k\sigma^\varepsilon-\nabla_k\sigma)(\sigma^\varepsilon)^\top +(\nabla_k\sigma)(\sigma^\varepsilon-\sigma)^\top \\
&+(\sigma^\varepsilon-\sigma)(\nabla_k\sigma)^\top + \sigma^\varepsilon(\nabla_k\sigma^\varepsilon-\nabla_k\sigma)^\top.
\end{align*}
Therefore, using $\|\sigma^\varepsilon\|_\infty \leq \|\sigma\|_\infty$, we obtain
\begin{align*}
\|\nabla_k a^\varepsilon-\nabla_k a\|_{L^2([t_0,t_1],L^2_\rho)} \leq 2\|\sigma\|_\infty
\|\nabla_k\sigma^\varepsilon-\nabla_k\sigma\|_{L^2([t_0,t_1], L^2_\rho)} + 2\bigl\||\nabla_k\sigma|\,|\sigma^\varepsilon-\sigma|\bigr\|_{L^2([t_0,t_1],L^2_\rho)}.
\end{align*}
We can show that the first term in the RHS goes to zero as $\varepsilon \downarrow 0$ by adapting the proof of 2) in Lemma \ref{lem_b}. (Namely, if a sequence of uniformly in $\varepsilon$ form-bounded vector fields converges locally in $L^2$ -- which, in our case, in a standard property of mollifiers -- then we can upgrade this convergence to the global convergence with respect to weight $\rho$.) The second term goes to zero by $\sigma^\varepsilon(t,\cdot) \rightarrow \sigma(t,\cdot)$ a.e.\,on $\mathbb R^d$ (this is a standard property of mollifiers) for a.e.\,$t \in \mathbb R$, upon applying the Dominated convergence theorem.
\end{proof}

\bigskip

\section{Proof that $J$ is invertible (in the proof of the stochastic Gronwall claim \ref{claim_gronwall})}
\label{J_app}

Step 1.~Let us make a preliminary observation regarding evaluating $f(J_t)$, where $f$ is a $C^2$ function of $d^2$ variables. The matrix identity $dJ_t=\sum_{l=1}^d A_t^l J_t dW_t^l$ yields scalar identities
\begin{equation}
\label{J_ij}
dJ^{ij}_t=\sum_{l=1}^d (A_t^l J_t)^{ij} dW_t^l
\end{equation}
The It\^{o}'s formula therefore yields
$$
df(J_t)= \sum_{i,j=1}^d \partial_{ij} f(J_t)dJ^{ij}_t + \frac{1}{2}\sum_{i,j,k,r=1}^d \partial_{ij} \partial_{kr} f(J_t)d[J^{ij},J^{kr}]_t,
$$
where the square brackets denote the cross-variation. Now, invoking identity \eqref{J_ij} and its immediate consequence $d[J^{ij},J^{kr}]_t=\sum_{l=1}^d (A_t^l J_t)^{ij}(A_t^l J_t)^{kr}dt$, we obtain
$$
df(J_t)=\sum_{l=1}^d \sum_{i,j=1}^d \partial_{ij} f(J_t) (A_t^l J_t)^{ij} dW_t^l + \frac{1}{2} \sum_{l=1}^d \sum_{i,j,k,r=1}^d \partial_{ij} \partial_{kr} f(J_t)(A_t^l J_t)^{ij}(A_t^l J_t)^{kr}dt.
$$
It is convenient to rewrite the right-hand side in compact form in terms of the directional derivatives of $f$ ($\nabla$ is the gradient on $\mathbb R^{d \times d}$, so the direction is a $d \times d$ matrix):
\begin{equation}
\label{prelim}
df(J_t)=\sum_{l=1}^d \nabla_{A_t^l J_t} f(J_t) dW_t^l + \frac{1}{2} \sum_{l=1}^d \nabla^2_{A_t^lJ_t, A_t^lJ_t} f (J_t)dt.
\end{equation}

\medskip

Step 2.~Now, armed with identity \eqref{prelim}, we take $f(X):=\det X$. Our goal is to obtain, using \eqref{prelim}, the SDE satisfied by $\det(J_t)$. The use of \eqref{prelim}  thus requires evaluating directional derivatives of the determinant. These are given by well-known formulas obtained using Taylor polynomial expansion of $\det X$ as a function of $X \in \mathbb R^{d \times d}$: 
$$
\nabla_{B X} \det(X)=\det (X) {\rm tr} B, \qquad \nabla^2_{BX,BX} \det (X)=\det X \bigl(({\rm tr\,}B)^2 - {\rm tr\,}B^2 \bigr).
$$
Importantly, there is no $X$ under the trace sign: we arrive at the following \textit{linear} SDE for $D_t:=\det J_t$:
$$
dD_t=D_t \sum_{l=1}^d {\rm tr}(A_t^l)dW_t^l + D_t \frac{1}{2}\sum_{l=1}^d \bigl(({\rm tr\,}(A_t^l))^2 - {\rm tr\,}(A_t^l)^2 \bigr)dt, \quad D_s=1.
$$
We can solve it:
$$
\det J_t = \exp \left( \sum_{l=1}^{d}\int_s^t \mathrm{tr}(A_r^l) dW_r^l -\frac{1}{2}\sum_{l=1}^{d}\int_s^t \mathrm{tr} \bigl((A_r^l)^2\bigr) dr \right).
$$
Hence $\det J_t$ never vanishes.

\bigskip

\section{Proof of Claim \ref{claim_com}}
\label{claim_com_proof}

\begin{proof}[Proof of Claim \ref{claim_com}]
Fix $y\in\mathbb R^d$ and $0\leq r<t\leq T$, and set
$U:=B_1(y)$, $F_m(x):=X_{r,t}^{x,m}$. The following argument works componentwise, so Lemma \ref{lem_compact} applies equally to these
vector-valued random fields. Recall that
\[
X_{r,t}^{x,m}
=
x+\int_r^t\sigma_m(\tau,X_{r,\tau}^{x,m})\,dW_\tau,
\qquad
\sup_m\|\sigma_m\|_\infty\leq M.
\]
We apply Proposition \ref{prop2} with $p=2$ to $X_{r,\cdot}^{x,m}$ -- crucially, 
the constants $K_1$, $K_2$ in this proposition are independent not only of $m$ and the centre $y$, but also of the initial time $r$.

\subsubsection*{Verification of \eqref{a.1}}
By It\^{o}'s isometry,
\[
\mathbf E|F_m(x)|^2
\leq
2|x|^2+
2\mathbf E\int_r^t
|\sigma_m(\tau,X_{r,\tau}^{x,m})|^2\,d\tau
\leq 2|x|^2+2M^2T.
\]
Since $y$ is fixed,
\begin{equation}
\label{claim_F_bd}
\sup_m\mathbf E\int_U|F_m(x)|^2\,dx
\leq C_{y,T}.
\end{equation}
Proposition \ref{prop2}(\textit{i}) and the Cauchy-Schwarz inequality yield
\[
\mathbf E\int_U
|\nabla_x X_{r,t}^{x,m}-I|^2\,dx
\leq
|U|^{1/2}
\|\nabla_xX_{r,t}^{\cdot,m}-I\|_
 {L^4(U,L^2(\Omega))}^2
\leq C|t-r|^{1/4}.
\]
Hence
\begin{equation}
\label{claim_grad_bd}
\sup_m\mathbf E\int_U|\nabla_xF_m(x)|^2\,dx
\leq C_T.
\end{equation}
Now, \eqref{claim_F_bd} and
\eqref{claim_grad_bd} yield \eqref{a.1}.

\subsubsection*{Verification of \eqref{a.2}}
Let us write in what follows
 $$G_m(s,x):=D_sF_m(x)\;\;\big(=D_sX_{r,t}^{x,m}\big).$$
Since $X_{r,t}^{x,m}$ depends only on the increments of the Brownian motion on
$[r,t]$,
\begin{equation}
\label{claim_zero}
G_m(s,x)=0
\qquad\text{for a.e. }s\notin[r,t].
\end{equation}
For $s\in[r,t]$, Proposition \ref{prop2}(\textit{ii}) gives
\[
\|G_m(s,\cdot)-\sigma_m(s,X_{r,s}^{\cdot,m})\|_{L^4(U,L^2(\Omega))}\leq C|t-s|^{1/8}.
\]
Since
$
\|\sigma_m(s,X_{r,s}^{\cdot,m})\|_
 {L^4(U,L^2(\Omega))}
\leq M|U|^{1/4},
$
we thus obtain
\begin{equation}
\label{G_bd}
\sup_m \sup_{s\in[r,t]}
\|G_m(s,\cdot)\|_{L^4(U,L^2(\Omega))}
\leq C_T.
\end{equation}
Hence, \eqref{claim_zero} and Cauchy-Schwarz, yield
$$
\sup_m
\mathbf E\int_U\int_0^T|D_sF_m(x)|^2 dsdx=
\sup_m\int_r^t\mathbf E\int_U|G_m(s,x)|^2 dx ds
\leq C_T
$$
$\Rightarrow$ \eqref{a.2}.

\subsubsection*{Verification of \eqref{a.3}}

Case 1: Let $s,s'\in[r,t]$. Then, by Proposition \ref{prop2}(\textit{iii}),
$$
\|G_m(s,\cdot)-G_m(s',\cdot)\|_
 {L^4(U,L^2(\Omega))}
\leq
K_2\left(
|s-s'|^{1/16}+|s-s'|^{\gamma/4}
\right).
$$
Put
$\alpha:=\frac{1}{16} \wedge \frac{\gamma}{4}$
and select some
$0<\beta<\alpha$.
Since we work on a finite time interval, the previous estimate yields
$
\|G_m(s,\cdot)-G_m(s',\cdot)\|_
 {L^4(U,L^2(\Omega))}
\leq C_T|s-s'|^\alpha.
$
Thus, using the Cauchy-Schwarz inequality, we have
\[
\mathbf E\int_U
|G_m(s,x)-G_m(s',x)|^2\,dx
\leq C_T|s-s'|^{2\alpha}.
\]
It follows that
\begin{align*}
&\mathbf E\int_U\int_r^t\int_r^t
\frac{|G_m(s,x)-G_m(s',x)|^2}
{|s-s'|^{1+2\beta}}
 ds ds' dx \notag\\
&\leq
C_T\int_r^t\int_r^t
|s-s'|^{-1+2(\alpha-\beta)}
ds ds'
<\infty 
\end{align*}
since $\beta<\alpha$.

Case 2: Let $s\in [r,t]$ and $s' \in [0,T] \setminus [r,t]$. Then $G_m(s',\cdot)=0$. Therefore, using \eqref{G_bd}, we can estimate
\begin{align*}
\mathbf E\int_U\int_{s \in [r,t], s' \in [0,T] \setminus [r,t]} & \frac{|G_m(s,x)-G_m(s',x)|^2}
{|s-s'|^{1+2\beta}}
 ds ds' dx \\
& \leq C_T \int_{r}^t \biggl( \int_0^r \frac{ds'}{(s-s')^{1+2\beta}} +\int_t^T \frac{ds'}{(s'-s)^{1+2\beta}} \biggr)ds \\
 & (\text{discard two negative terms coming from the endpoints $s'=0$ and $s'=T$}) \\
 & \leq C'_{T,\beta}\int_r^t
\left[
(s-r)^{-2\beta}+(t-s)^{-2\beta}
\right]ds\leq
C_{T,\beta}(t-r)^{1-2\beta},
\end{align*}
where in the last integration we have used $\beta<\frac{1}{2}$.

Case 3: The case $s' \in [r,t]$ and $s \in [0,T] \setminus [r,t]$ gives the same contribution in the right-hand side of \eqref{a.3} as the Case 2.

Case 4: If both $s,s' \not \in [r,t]$, then  $G_m(s',\cdot)=G_m(s,\cdot)=0$, so the contribution to the right-hand side of \eqref{a.3} is zero.

Cases 1-4 yield \eqref{a.3}. This ends the proof of Claim \ref{claim_com}.
\end{proof}

\bigskip

\section{Kolmogorov–Chentsov theorems}
\label{kunita_app}

For the reader's convenience, we state below Kunita's Kolmogorov–Chentsov-type theorems \cite{Ku} that we have used in the proof of Theorem \ref{thm1}(\textit{i}).

\begin{theorem}[{\cite[Theorem 1.4.1]{Ku}}]
Let $X(x)$, $x\in D$, be a random field with values in a Banach space $B$, where $D$ is a domain in $\mathbb R^d$.
Assume that there exist positive constants
$\gamma$, $C$, and $\alpha_i$, $i=1,\ldots,d$ with $ \sum_{i=1}^{d}\alpha_i^{-1}<1$ satisfying 
\begin{equation*}
\mathbf{E}\!\left[ \left\|X(x)-X(y)\right\|^\gamma \right] \leq C\left(   \sum_{i=1}^{d}  |x^i-y^i|^{\alpha_i}\right),
\quad x,y\in D.
\end{equation*}
where, $\|\cdot\|$ stands for the norm of the Banach space and $ x=(x^1,\ldots,x^d)$, $y=(y^1,\ldots,y^d)$.Then $X$ has a continuous modification.
\end{theorem}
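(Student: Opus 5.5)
The plan is the classical dyadic chaining argument of Kolmogorov--Chentsov, adapted to an anisotropic grid. It suffices to produce a continuous modification on every closed cube $Q\subset D$ and then patch these together over a countable exhausting family. Indeed, two continuous modifications agree on a dense countable set almost surely, hence everywhere.

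\textbf{Nested anisotropic grids.} Fix a cube $Q$, which after rescaling we may take to be $[0,1]^d$. Put $s:=\sum_i\alpha_i^{-1}<1$. For $n\ge 1$ set $k_i(n):=\lceil n/\alpha_i\rceil$ and let $D_n$ be the grid of mesh $2^{-k_i(n)}$ in the $i$-th coordinate. Since $k_i(n)$ is nondecreasing in $n$, the grids are nested, and $\#D_n\le 2^{d}2^{ns}$.

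\textbf{Bound on neighbouring increments.} Call $x,y\in D_n$ \emph{neighbours} if they differ by one mesh step in exactly one coordinate $i$. The hypothesis then gives
$$
\mathbf E\|X(x)-X(y)\|^\gamma\le C2^{-k_i(n)\alpha_i}\le C2^{-n}.
$$
Fix $0<\kappa<(1-s)/\gamma$. By Chebyshev's inequality and a union bound over the at most $d\,2^d2^{ns}$ neighbouring pairs,
$$
\mathbf P\Big(\max_{x,y\in D_n\ \text{neighbours}}\|X(x)-X(y)\|>2^{-n\kappa}\Big)\le C'2^{-n(1-s-\gamma\kappa)},
$$
which is summable in $n$. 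By Borel--Cantelli, almost surely there is an $n_0(\omega)$ such that all neighbouring increments at every level $n\ge n_0$ are at most $2^{-n\kappa}$.

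\textbf{Chaining.} Every point of $D_{n+1}$ can be joined to a point of $D_n$ by at most $L:=\sum_i\big(2^{\lceil 1/\alpha_i\rceil+1}\big)$ neighbour steps at level $n+1$, because $k_i(n+1)-k_i(n)\le\lceil1/\alpha_i\rceil+1$. Now take $x,y\in\bigcup_nD_n$ and choose $m\ge n_0$ with $|x^i-y^i|\lesssim 2^{-k_i(m)}$ for all $i$. Telescoping from $x$ and from $y$ down to level $m$, and joining the two level-$m$ points by a bounded number of neighbour steps, gives
$$
\|X(x)-X(y)\|\le C L\sum_{n\ge m}2^{-n\kappa}\le C''2^{-m\kappa}.
$$
So $X$ is almost surely uniformly continuous on the dense set $\bigcup_n D_n$, with a Hölder-type modulus in the quasi-distance $\max_i|x^i-y^i|^{\alpha_i}$. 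We define $\tilde X$ as its unique continuous extension to $Q$, and $\tilde X:=0$ on the null event where this fails. The moment hypothesis implies $X(y)\to X(x)$ in probability as $y\to x$, while $\tilde X(y)\to\tilde X(x)$ almost surely. Hence $\tilde X(x)=X(x)$ almost surely for each fixed $x$, so $\tilde X$ is a modification.

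\textbf{Main obstacle.} The delicate step is the combinatorics of the chaining. One must choose the integer exponents $k_i(n)$ so that three things hold at once: the grids are nested, the one-step moment bound decays uniformly like $2^{-n}$ in every direction, and the number of steps between consecutive levels stays bounded independently of $n$. The strict inequality $s<1$ is used exactly once, in making the union bound summable. Boundary effects near $\partial Q$ are handled by working on closed subcubes.
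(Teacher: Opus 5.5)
The paper does not prove this statement. It only quotes it from Kunita's book \cite{Ku} in the appendix for reference, so there is no in-paper argument to compare against. Your anisotropic dyadic chaining argument is the classical proof of this Kolmogorov--Chentsov criterion, and it is correct.

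The slips are cosmetic. The count should be $\#D_n\le 4^d2^{ns}$ rather than $2^d2^{ns}$. Also, $\sum_i\alpha_i^{-1}<1$ forces every $\alpha_i>1$, so in fact $k_i(n+1)-k_i(n)\in\{0,1\}$ and $L=d$ steps suffice.

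The one step worth writing out is the junction at level $m$. If you always round down coordinatewise, nestedness of the grids gives $0\le x^i-x^i_m<2^{-k_i(m)}$. Then $|x^i-y^i|\le 2^{-k_i(m)}$ implies that $x_m$ and $y_m$ differ by at most one level-$m$ step per coordinate. Choosing $m$ maximal with this property yields $\|X(x)-X(y)\|\le C\,(\max_i|x^i-y^i|^{\alpha_i})^{\kappa}$ whenever $\max_i 2^{k_i(n_0(\omega))}|x^i-y^i|\le 1$.
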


\begin{theorem}[{\cite[Theorem 1.4.7]{Ku}}]
Let $ \{X_n(x) \mid x \in I = [0,1]^d \}$ be a sequence of continuous random fields with values in a real separable
semi-reflexive Fr\'echet space $S$ with seminorms $\|\cdot\|_N$, $N=1,2,\ldots$. Assume that, for each $N$, there exist
positive constants $\gamma$, $C$, and $\alpha_1,\ldots,\alpha_d$, with $ \sum_{i=1}^{d}\alpha_i^{-1}<1$,
such that
\begin{equation*}
\mathbf{E}\left[ \left\|X_n(x)-X_n(y)\right\|_N^\gamma \right] \leq C\left( \sum_{i=1}^{d}|x^i-y^i|^{\alpha_i} \right),
\quad x,y\in I,
\end{equation*}
and
\begin{equation*}
\mathbf{E} \left[ \left\|X_n(x)\right\|_N^\gamma \right] \leq C, \quad x\in I,
\end{equation*}
for every $n$. Then $\{X_n\}$ is tight with respect to the semi-weak topology of $C(I,S)$, i.e.\,the topology of the uniform convergence on $I$ with respect to the weak topology of $S$.
\end{theorem}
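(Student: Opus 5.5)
The plan is the classical Kolmogorov–Chentsov chaining argument, made uniform in $n$. It is followed by an Arzelà–Ascoli-type compactness argument for $C(I,S)$ with the semi-weak topology. We treat each seminorm $\|\cdot\|_N$ separately and glue the results with a diagonal choice of constants.

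\textbf{Step 1 (uniform Hölder moduli).} Fix $N$ and introduce the anisotropic quasi-distance $\varrho(x,y):=\sum_{i=1}^d|x^i-y^i|^{\alpha_i/\gamma}$. Rescaling coordinates so that $\varrho$ becomes comparable to a metric, the hypothesis reads $\mathbf E\|X_n(x)-X_n(y)\|_N^\gamma\le C\varrho(x,y)^\gamma$. The condition $\sum_i\alpha_i^{-1}<1$ says exactly that the ``$\varrho$-dimension'' $\sum_i\gamma/\alpha_i$ of $I$ is smaller than $\gamma$.

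We then run the dyadic chaining on anisotropic dyadic grids $D_k$, with mesh $2^{-k\gamma/\alpha_i}$ in direction $i$. The number of neighbouring pairs in $D_k$ is $\lesssim 2^{k\sum_i\gamma/\alpha_i}$. Markov's inequality and a union bound over these pairs give
$$\mathbf P\Bigl(\max_{\text{neighbours in }D_k}\|X_n(x)-X_n(y)\|_N>2^{-k\beta}\Bigr)\le C2^{-k(\gamma-\sum_i\gamma/\alpha_i-\beta\gamma)}.$$
This bound is summable in $k$ for $\beta>0$ small. The usual telescoping along dyadic approximations, together with the continuity of $X_n$, then yields a random variable $H_{n,N}$ with
$$\|X_n(x)-X_n(y)\|_N\le H_{n,N}\,\varrho(x,y)^{\beta}\quad\text{for all } x,y\in I,$$
and $\mathbf P(H_{n,N}>L)\le c_N(L)$, where $c_N(L)\to0$ as $L\to\infty$ uniformly in $n$. (Alternatively, one can use Garsia–Rodemich–Rumsey to bound $\sup_n\mathbf E H_{n,N}^{\gamma'}$ for some $\gamma'<\gamma$.) In addition, $\|X_n(x)\|_N\le\|X_n(0)\|_N+H_{n,N}\varrho(x,0)^\beta$. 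The moment bound $\mathbf E\|X_n(0)\|_N^\gamma\le C$ then gives $\sup_n\mathbf P(\sup_{x\in I}\|X_n(x)\|_N>L)\to0$ as $L\to\infty$.

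\textbf{Step 2 (choice of the compact set).} Given $\varepsilon>0$, choose $L_N$ so that both probabilities above are at most $\varepsilon2^{-N-1}$ for every $n$. Let
$$\mathcal K:=\bigl\{f\in C(I,S):\ \sup_x\|f(x)\|_N\le L_N,\ \|f(x)-f(y)\|_N\le L_N\varrho(x,y)^\beta\ \ \forall N\bigr\}.$$
Then $\mathbf P(X_n\notin\mathcal K)\le\varepsilon$ for all $n$, so it remains to show that $\mathcal K$ is compact in the semi-weak topology.

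\textbf{Step 3 (compactness of $\mathcal K$) — the main obstacle.} The weak topology of $S$ need not be metrizable, so the sequential Arzelà–Ascoli theorem cannot be quoted directly. We argue with nets instead. The set $B:=\{v:\|v\|_N\le L_N\ \forall N\}$ is bounded and closed, hence weakly compact by semi-reflexivity. Therefore $\mathcal K\subset B^I$, and $B^I$ is compact in the product (pointwise weak) topology by Tychonoff.

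Take a net $f_\alpha$ in $\mathcal K$ and pass to a subnet converging pointwise weakly to some $f$. Weak lower semicontinuity of the seminorms shows that $f$ satisfies the same bounds, so $f\in\mathcal K$; in particular $f$ is strongly, hence weakly, continuous.

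To upgrade pointwise convergence to uniform weak convergence, fix a continuous functional $\ell$. Since $\ell$ is bounded by $c\|\cdot\|_N$ for some $N$, the scalar functions $\ell\circ f_\alpha$ are uniformly Hölder with respect to $\varrho$. Given $\eta>0$, take a finite $\eta$-net of $I$. Pointwise convergence on the net, combined with the equicontinuity of all $\ell\circ f_\alpha$ and of $\ell\circ f$, gives $\sup_{x\in I}|\ell(f_\alpha(x))-\ell(f(x))|\to0$. Hence the subnet converges to $f$ in the semi-weak topology, and $\mathcal K$ is compact, which yields tightness.

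The delicate points are exactly here: the use of semi-reflexivity to get weak compactness of $B$, and the fact that continuous functionals are dominated by a single seminorm, which is what converts strong equicontinuity into uniform weak convergence.
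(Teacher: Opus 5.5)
The paper does not prove this statement. It quotes it from Kunita's monograph in the appendix for reference only, and uses it only in Step 5 of the proof of Theorem 1(\emph{i}). There the field takes values in $\mathbb R^d$, so the semi-weak topology is the ordinary uniform topology and your Step 3 reduces to the classical Arzel\`a--Ascoli theorem. There is therefore no argument in the paper to compare against.

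Judged on its own, your proof is correct and follows the standard route:
\begin{itemize}
\item anisotropic Kolmogorov chaining with every bound uniform in $n$, which gives tightness of the H\"older constants $H_{n,N}$ and of $\sup_x\|X_n(x)\|_N$;
\item compactness of $\mathcal K$ in the semi-weak topology via nets. Semi-reflexivity makes $B$ weakly compact, Tychonoff yields a pointwise weakly convergent subnet, weak lower semicontinuity of the seminorms keeps the limit in $\mathcal K$, and equicontinuity of the scalar functions $\ell\circ f_\alpha$ upgrades pointwise weak convergence to uniform weak convergence.
\end{itemize}

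A few points should be made explicit when you write it up.
\begin{itemize}
\item Since $\gamma$ and the $\alpha_i$ depend on $N$, so do $\varrho$ and $\beta$. The set $\mathcal K$ must therefore be defined with $\varrho_N$ and $\beta_N$.
\item $B$ is weakly closed because it is convex and closed (or directly by weak lower semicontinuity). This is what turns relative weak compactness of bounded sets into weak compactness of $B$.
\item A continuous functional $\ell$ is dominated by a single seminorm only if the seminorms are directed. Otherwise use the maximum of finitely many of them; the conclusion is unchanged.
\item The event $\{X_n\in\mathcal K\}$ is measurable because, by continuity of $X_n$, its defining conditions need only be checked on countably many $x,y$.
\item The Garsia--Rodemich--Rumsey alternative, in its usual form, requires $\gamma\geq 1$ (convexity of $u\mapsto u^\gamma$). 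Your chaining argument works for every $\gamma>0$.
\end{itemize}
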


\bigskip

\section*{AI usage disclosure}

The authors made use of OpenAI ChatGPT 5.6 for mathematical discussions related to the proof of Proposition \ref{prop2}. We also used ChatGPT for editorial assistance. The text of the article -- including all em dashes -- is written in its entirety by the authors.

\medskip

\end{document}